\documentclass{amsart}

\usepackage{cleveref}

\theoremstyle{plain}
\newtheorem{theorem}{Theorem}
\numberwithin{theorem}{section}
\newtheorem{lemma}[theorem]{Lemma}
\newtheorem{proposition}[theorem]{Proposition}
\newtheorem{corollary}[theorem]{Corollary}

\newtheorem{question}[theorem]{Question}

\theoremstyle{definition}
\newtheorem{definition}[theorem]{Definition}
\newtheorem{remark}[theorem]{Remark}
\newtheorem{example}[theorem]{Example}

\newcommand{\R}{\mathbb R}
\newcommand{\N}{\mathbb N}

\newcommand{\M}{\mathcal M}

\newcommand{\fiber}{\mathcal F}
\newcommand{\baues}{\mathcal B}
\newcommand{\level}[2]{#1^{(#2)}}
\newcommand{\slevel}[3]{#1^{(#2)#3}}
\newcommand{\cyclic}[2]{\mathbf{C}(#1,#2)}
\newcommand{\polygon}[1]{\mathbf{P}_#1}
\newcommand{\Gale}[1]{\mathbf{G}_#1}

\DeclareMathOperator{\conv}{conv}

\DeclareMathOperator{\GKZ}{GKZ}
\DeclareMathOperator{\vol}{vol}
\DeclareMathOperator{\vertices}{vertices}

\DeclareMathOperator{\coh}{coh}
\DeclareMathOperator{\espan}{span}
\DeclareMathOperator{\up}{uh}
\newcommand{\UP}{\mathcal U}
\newcommand{\DOWN}{\mathcal D}

\usepackage[colorinlistoftodos]{todonotes}
\usepackage{enumerate}
\usepackage{tikz-cd}

\title{Hypersimplicial subdivisions}

\author{Jorge Alberto Olarte \and Francisco Santos }

\address[J.~A.~Olarte]
{
Institut f\"ur Mathematik, Freie Universit\"at Berlin, Germany
}
\email{olarte@zedat.fu-berlin.de}

\address[F.~Santos]
{
Dept.~of Mathematics, Statistics and Comp.~Sci., Univ.~of Cantabria, Spain
}
\email{francisco.santos@unican.es}

\thanks{The authors were supported by the Einstein Foundation Berlin under grant EVF-2015-230.
 Work of F. Santos is also supported by project MTM2017-83750-P of the Spanish Ministry of Science (AEI/FEDER, UE)}

\begin{document}

\begin{abstract}
Let $\pi:\R^n \to \R^d$ be any linear projection, let $A$ be the image of the standard basis. 
Motivated by Postnikov's study of postitive Grassmannians via plabic graphs and Galashin's connection of plabic graphs to slices of zonotopal tilings of 3-dimensional cyclic zonotopes, we study the poset of subdivisions induced by the restriction of $\pi$ to 
 the $k$-th hypersimplex, for $k=1,\dots,n-1$.
We show that:

\begin{itemize}
\item For arbitrary $A$ and for $k\le d+1$, the corresponding fiber polytope $\mathcal F^{(k)}(A)$ is normally isomorphic to the Minkowski sum of the secondary polytopes of all subsets of $A$ of size $\max\{d+2,n-k+1\}$.

\item When $A=\polygon{n}$ is the vertex set of an $n$-gon, we answer the Baues question in the positive:
the inclusion of the poset of $\pi$-coherent subdivisions into the poset of all $\pi$-induced subdivisions is a homotopy equivalence.

\item When $A=\cyclic{d}{n}$ is the vertex set of a cyclic $d$-polytope with $d$ odd and any $n \ge d+3$, there are non-lifting (and even more so, non-separated) $\pi$-induced subdivisions for $k=2$. 
\end{itemize}
\end{abstract}

\maketitle
\setcounter{tocdepth}{1}
\tableofcontents

\section{Introduction}
The main object of study in this paper are hypersimplicial subdivisions, defined as follows.
Let $A$ be a set of $n$ points affinely spanning $\R^d$. Let $\Delta_n$ be the standard $(n-1)$-dimensional simplex in $\R^n$. 
Consider the linear projection $\pi: \R^n \to\R^d$ sending the vertices of $\Delta^n$ to the points in $A$. (We implicitly consider the points in $A$ labelled by $[n]$, so that $\pi$ sends $e_i$ to the point labelled by $i$). Let $\level{\Delta_n}k := k\Delta_n \cap [0,1]^n$ be the standard hypersimplex and $\level Ak$ the image of the vertices of $\level{\Delta_n}k$ under $\pi$ (so that points in $\level Ak$ are labelled by $k$-subsets of $[ n]$). A \emph{hypersimplicial subdivision} of $\level Ak$ is a polyhedral subdivision of $\conv(\level Ak)$ such that every face of the subdivision is the image of a face of $\level{\Delta_n}k$ under $\pi$. Put differently, we call hypersimplicial subdivisions the $\pi$-induced subdivisions of the projection $\pi: \level{\Delta_n}k \to \conv(\level Ak)$, as introduced in \cite{BilStu,BKS} (see also \cite{Reiner,triangbook}). See more details in \Cref{sec:preliminaries}.

One reason to study such subdivisions comes from the case where $A\subset \R^2$ are the vertices of a convex polygon. Galashin \cite{Galashin} shows that in this case fine hypersimplicial subdivisions, which we call \emph{hypertriangulations}, are in bijection with maximal collections of chord-separated $k$-sets. These, in turn, correspond to reduced plabic graphs, \cite{OhPosSpe} which are a fundamental tool in the study of the positive Grassmannian \cite{Postnikov2006,Postnikov}. 

More generally, it is of interest the case where $A$ are the vertices of a cyclic polytope $\cyclic nd \subset \R^d$.
(The $n$-gon is the case $d=2$).
In \cite[Problem 10.3]{Postnikov} Postnikov asks the \emph{generalized Baues problem} for this scenario; that is, he asks whether the poset of hypersimplicial subdivisions of $\level{\cyclic nd}k$ has the homotopy type of a $(n-d-2)$-sphere. For $k=1$ this was shown to have a positive answer by Rambau and Santos \cite{RaSa}. 
For $d=2$, Balitskiy and Wellman show the poset to be simply connected and again ask the Baues question for it (\cite[Theorem 6.4 and Question 6.1]{BalWel}). We here give the answer to this:

\begin{theorem}
\label{thm:baues-intro}
Let $\polygon n$ be the vertices of any convex $n$-gon. The poset of hypersimplicial subdivisions $\baues(\level{\Delta_n}k \to \level {\polygon n}k)$ retracts onto the poset of \emph{coherent} hypersimplicial subdivisions. In particular, it has the homotopy type of an $(n-4)$-sphere.
\end{theorem}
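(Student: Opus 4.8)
The plan is to produce an explicit order-preserving \emph{coherentization} retraction and to read off the homotopy type from the Billera--Sturmfels fiber polytope. Write $\baues := \baues(\level{\Delta_n}k \to \level {\polygon n}k)$ and let $\baues_{\mathrm{coh}}\subseteq\baues$ be the subposet of coherent subdivisions; by \cite{BilStu}, $\baues_{\mathrm{coh}}$ is the face poset of the boundary of the fiber polytope $\level{\fiber}k(\polygon n)$, which has dimension $(n-1)-2=n-3$, so $\baues_{\mathrm{coh}}$ is homotopy equivalent to $S^{n-4}$. It therefore suffices to construct an order-preserving map $r\colon\baues\to\baues_{\mathrm{coh}}$ with $r|_{\baues_{\mathrm{coh}}}=\mathrm{id}$ and with $r$ comparable to $\mathrm{id}_\baues$ in the poset order (for instance, $r(S)$ always coarsening $S$); such an $r$ is idempotent and order-homotopic to the identity, hence induces a strong deformation retraction of order complexes. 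This gives both the ``retracts onto'' statement and, with the previous sentence, the homotopy type.

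Two reductions come first. Complementation of $k$-subsets gives a poset isomorphism between $\baues(\level{\Delta_n}k\to\level{\polygon n}k)$ and $\baues(\level{\Delta_n}{n-k}\to\level{\polygon n}{n-k})$, so we may assume $k\le n/2$. When $k=1$ the hypersimplex is the simplex $\Delta_n$ and $\level{\polygon n}1$ is the vertex set of a convex $n$-gon; since every polyhedral subdivision of a point configuration in convex position is regular, here $\baues=\baues_{\mathrm{coh}}$ and there is nothing to prove. Hence we may induct on $n$, with these cases (and small $n$) as the base, in the spirit of the Rambau--Santos argument for $k=1$ in \cite{RaSa}.

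For the inductive step the engine is the Cayley structure of the hypersimplex: $\level{\Delta_n}k$ is the Cayley polytope $\mathcal C(\level{\Delta_{n-1}}k,\level{\Delta_{n-1}}{k-1})$ with the $x_n$-coordinate as Cayley direction, its facets $\{x_n=0\}$ and $\{x_n=1\}$ being copies of $\level{\Delta_{n-1}}k$ and $\level{\Delta_{n-1}}{k-1}$ (the vertices $K$, $|K|=k$, split according to whether $n\notin K$ or $n\in K$). Accordingly a hypersimplicial subdivision $S$ of $\level{\polygon n}k$ has a \emph{deletion} $S\setminus a_n$, a subdivision of $\level{\polygon{n-1}}k$, and a \emph{contraction} $S/a_n$ (its restriction to the facet over $a_n$), a subdivision of $\level{\polygon{n-1}}{k-1}$; both are poset maps carrying coherent subdivisions to coherent ones. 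I would define $r$ in two stages. In a \emph{local} stage, push $S$ to a subdivision $r_1(S)$ that is ``coherent in the $a_n$-direction'' without changing $S\setminus a_n$ or $S/a_n$; the relevant fibers here are Baues posets of projections of strictly smaller complexity — on fewer points, or, after collapsing the Cayley direction, onto a segment — so this stage is furnished by the induction hypothesis together with the classical positive instances of the generalized Baues problem \cite{BKS}. In an \emph{inductive} stage, apply the retractions given by the induction hypothesis to $r_1(S)\setminus a_n$ and $r_1(S)/a_n$ and glue the results back along the Cayley correspondence to a coherent hypersimplicial subdivision $r(S)$ of $\level{\polygon n}k$. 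One then checks that $r$ is order-preserving, fixes $\baues_{\mathrm{coh}}$, and is comparable to $\mathrm{id}_\baues$.

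The main obstacle is the compatibility and globality of this scheme, and it is precisely here that the geometry of the $n$-gon — as opposed to odd-dimensional cyclic polytopes — must intervene. One must show that the two independently coherentized pieces over $\level{\polygon{n-1}}k$ and $\level{\polygon{n-1}}{k-1}$ glue to a genuinely \emph{coherent} subdivision of $\level{\polygon n}k$, not merely to one that is coherent after deleting $a_n$, and that this gluing is essentially forced because every hypersimplicial subdivision of a polygon is \emph{separated} — equivalently, lifts to a zonotopal tiling of the $3$-dimensional cyclic zonotope on $\{(a_i,1):i\in[n]\}$ — a rigidity that fails for the odd-$d$ configurations of the third bullet and that should be extracted from the combinatorics of chord-separated collections, in the spirit of Galashin \cite{Galashin} and Balitskiy--Wellman \cite{BalWel}. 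Finally, upgrading $r$ from a map defined on fine subdivisions (or on the flip graph) to an honest poset retraction comparable to the identity is substantially stronger than the simple connectivity already established in \cite{BalWel}, and is expected to account for the bulk of the technical work.
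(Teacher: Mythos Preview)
Your proposal is a plan rather than a proof, and the plan has a structural gap that you yourself flag but do not close. The deletion $S\setminus a_n$ and contraction $S/a_n$ you describe are only the restrictions of $S$ to the two facets $\{x_n=0\}$ and $\{x_n=1\}$ of $\level{\Delta_n}k$; they do not determine $S$, nor do a pair of (coherent) subdivisions of $\level{\polygon{n-1}}k$ and $\level{\polygon{n-1}}{k-1}$ canonically ``glue back'' to a subdivision of $\level{\polygon n}k$. The Cayley trick does not help here: it relates subdivisions of the Cayley polytope to \emph{mixed} subdivisions of the Minkowski sum, not to pairs of subdivisions of the two summands, and in any case the Cayley direction $x_n$ is not transverse to the projection $\pi$ (it is sent to $a_n\in\R^2$). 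Consequently your ``local stage'' (make $S$ coherent in the $a_n$-direction while fixing its two facet restrictions) and your ``inductive stage'' (coherentize the two restrictions and glue) are both undefined as stated, and the sentence ``one then checks that $r$ is order-preserving, fixes $\baues_{\mathrm{coh}}$, and is comparable to $\mathrm{id}_\baues$'' is exactly the content of the theorem.

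The paper's argument is quite different and avoids these issues. It does not construct an explicit retraction and does not induct on $n$; instead it inducts on the \emph{level} $k$. Using that every hypersimplicial subdivision of $\polygon n$ lifts to a zonotopal tiling of $Z(\polygon n)$ \cite{BalWel}, it introduces an intermediate poset $\level{\baues}{k+\frac12}(\polygon n)$ of ``half-level'' slices of zonotopal tilings, together with order-preserving maps $\UP\colon\level{\baues}{k}\to\level{\baues}{k+\frac12}$ and $\DOWN\colon\level{\baues}{k+1}\to\level{\baues}{k+\frac12}$. The key technical work is to show that each fiber of $\UP$ and $\DOWN$ (and each of the truncated fibers required by Babson's criterion) has a unique maximal element, hence is contractible; Babson's lemma then makes $\UP$ and $\DOWN$ homotopy equivalences. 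A commuting square through $\baues^Z_{\coh}(\polygon n)$ and the base case $k=1$ of \cite{RaSa} then propagates the homotopy equivalence of the inclusion from level $k$ to level $k+1$. The retraction in the statement follows from the inclusion being a homotopy equivalence of CW pairs, not from an explicit formula.
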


\cite[Problem 10.3]{Postnikov} also asks for which values of the parameters can all hypersimplicial subdivisions of $\level{\cyclic nd}k$ be lifted to zonotopal tilings of the cyclic zonotope. This was already known to be false for $d=1$ \cite[Example 10.4]{Postnikov} and we generalize the counterexamples to every odd dimension:

\begin{theorem}
\label{thm:separated-intro}
Consider the cyclic polytope $\cyclic nd\subset \R^d$ for odd $d$ and $n\ge d+3$. Then, for every $k\in [2,n-2]$ there exist hypersimplicial subdivisions of $\level{\cyclic nd}k$  that do not extend to zonotopal tilings of the cyclic zonotope $Z(\cyclic nd)$.
\end{theorem}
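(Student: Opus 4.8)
The plan is to reduce the statement to Postnikov's $d=1$ counterexample and to propagate it upward by an induction on $d$ in steps of two. It helps to first reformulate via the Bohne--Dress theorem: a zonotopal tiling of $Z(\cyclic nd)$ is the same as a single-element lifting of the oriented matroid of the cone over $\cyclic nd$, that is, of the alternating (``cyclic'') oriented matroid of rank $d+1$ on $n$ elements; and a hypersimplicial subdivision $S$ of $\level{\cyclic nd}k$ \emph{extends to a zonotopal tiling} precisely when the $k$-slice of the corresponding tiling refines $S$. The base case $d=1$ is \cite[Example 10.4]{Postnikov}, which exhibits such a non-extendable $S$ for one pair $(n,k)$. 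I would first upgrade that single example to a family covering all $n\ge 4$ and all $k\in[2,n-2]$: the cases $k>n/2$ reduce to $k\le n/2$ by the symmetry $k\leftrightarrow n-k$ of hypersimplices, and one passes from $n$ to $n+1$ by placing the new vertex far out on the moment curve and extending $S$ over the (far, disjoint) new region in the only minimal way --- any zonotopal tiling of $Z(\cyclic{n+1}1)$ extending the enlarged subdivision, after deleting the new generator, becomes a tiling of $Z(\cyclic{n}1)$ whose $k$-slice still refines the old $S$, so non-extendability is inherited.

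For the inductive step I would use one-point suspensions. The one-point suspension of $\cyclic nd$ at a suitable vertex is (a realization of) $\cyclic{n+1}{d+1}$ --- a classical fact, used e.g.\ in the study of the Baues problem for cyclic polytopes \cite{RaSa}; iterating twice produces $\cyclic{n+2}{d+2}$ from $\cyclic nd$, so from the line ($d=1$, all $n\ge 4$) one reaches every $\cyclic nd$ with $d$ odd and $n\ge d+3$. I would therefore define a one-point-suspension operation $S\mapsto\widehat S$ carrying a hypersimplicial subdivision of $\level{\cyclic nd}k$ to one of $\level{\cyclic{n+1}{d+1}}k$ for the same $k$ --- this requires choosing on which side of each relevant $k$-subset the suspension vertex is placed --- and then prove: if $S$ does not extend to a zonotopal tiling of $Z(\cyclic nd)$, then $\widehat S$ does not extend to one of $Z(\cyclic{n+1}{d+1})$. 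On the oriented-matroid side the cone over $\widehat{\cyclic nd}$ is a one-point suspension of the cone over $\cyclic nd$, and contracting the suspension element recovers the latter; hence a single-element lifting of the suspended configuration whose $k$-slice refines $\widehat S$ should, after contracting the suspension element, yield a single-element lifting of the original configuration whose $k$-slice refines $S$, contradicting the inductive hypothesis.

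The main obstacle I anticipate is precisely this last compatibility: showing that ``taking the $k$-slice of a zonotopal tiling'' commutes, in the right sense, with contraction of the suspension element (and with deletion of the lifting element), so that a hypothetical extension upstairs genuinely descends. This is where the geometry of cyclic configurations has to be used --- in particular the fact that one-point suspension sends cyclic to cyclic compatibly with the moment-curve parametrization --- and it is the part that will demand careful, technical work; a clean framework is the functoriality of the fiber polytopes $\level{\fiber}k$ and of the Baues posets under these elementary operations, together with the precise bookkeeping of how $k$ transforms under the suspension. A second, more routine obstacle is making \cite[Example 10.4]{Postnikov} fully explicit for all $(n,k)$ and certifying non-extendability self-containedly; here it is convenient to point to a pair of cells of $S$ whose mutual position violates the separation property that every liftable subdivision must have, which simultaneously shows that the subdivisions produced are not merely non-liftable but non-separated --- the stronger conclusion stated in the abstract, and one that is visibly preserved by one-point suspension.
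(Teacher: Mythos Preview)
Your inductive step on $d$ fails at its core: the one-point suspension of $\cyclic nd$ at a vertex is \emph{never} a realization of $\cyclic{n+1}{d+1}$ when $n\ge d+3$, which is precisely the range you need. A one-point suspension at $v$ keeps every circuit of the original that avoids $v$; since $\cyclic nd$ is uniform of rank $d+1$, those are circuits of size $d+2$, whereas $\cyclic{n+1}{d+1}$, being uniform of rank $d+2$, has only circuits of size $d+3$. For $n\ge d+3$ every vertex is avoided by at least one circuit, so the suspension is not even uniform. (Dually: suspension makes the two copies of $v$ antiparallel in the Gale transform, but the Gale transform of a cyclic polytope with $n\ge d+3$ is uniform and has no antiparallel pair.) It is true that contracting the last element of $\cyclic{n+1}{d+1}$ yields $\cyclic nd$, but the corresponding single-element coextension is not a one-point suspension, and without the suspension structure your descent argument --- ``contract the suspension element and recover a lifting downstairs'' --- loses its leverage. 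The appeal to \cite{RaSa} is misplaced; that paper does not use one-point suspensions of cyclic polytopes.

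The paper's proof avoids induction on $d$ altogether. For each odd $d$ it works directly at $n=d+3$, where the Gale transform of $\cyclic{d+3}{d}$ has rank two --- just $d+3$ vectors alternating around the origin in the plane --- and reads off from this picture a coherent zonotopal tiling together with two explicit cubical flips; performed successively at any level $k\ge 2$, these flips produce a pair of non-separated tiles (\Cref{lem:non-separated}). The passage to larger $n$ and the spread over all $k\in[2,n-2]$ is then a short extension lemma (\Cref{lemma:non-separated}), close in spirit to what you sketch for $d=1$. So your intuition about growing $n$ is sound, but the heart of the argument is a direct planar Gale-dual construction for each odd $d$, not a climb from $d=1$.
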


In contrast, Galashin \cite{Galashin} showed that the answer to Postnikov's question is positive in dimension two for  \emph{hypertriangulations}, a result that was generalized to all hypersimplicial \emph{subdivisions} by 
Balitskiy and Wellman \cite[Lemma 6.3]{BalWel}.

The poset of coherent hypersimplicial subdivisions of any $A$ is isomorphic to the face poset of a polytope, a particular case of a fiber polytope. When $k=1$ this is just the secondary polytope of $A$, so for $k>1$ we call it the \emph{$k$-th hypersecondary polytope of $A$}. We study hypersecondary polytopes for any $A\subset \R^d$ and $k\le d+1$. Specifically, we show that these polytopes are normally equivalent to the Minkowski sum of certain faces of the secondary polytope of $A$.
By symmetry, an analogue statement holds  for $n-d-1\le k<n$.

\begin{theorem}
\label{thm:fiber-intro}
Let $A\subseteq \R^d$ be a configuration of size $n$ and $k\in [d+1]$. Let $s = \max (n-k+1, d+2)$.  The hypersecondary polytope $\fiber^{(k)}(A)$ is normally equivalent to the Minkowski sum of the secondary polytopes of all subsets of $A$ of size $s$.
\end{theorem}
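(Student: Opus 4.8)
The plan is to reduce the statement to an equality of normal fans, verify that equality by restricting coherent subdivisions to faces of the hypersimplex, and treat one range of parameters separately.

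\emph{Reduction.} Two polytopes are normally equivalent precisely when their normal fans coincide, the normal fan of a Minkowski sum is the common refinement of the normal fans of the summands, and a complete polyhedral fan is recovered from the partition of its ambient space into the relative interiors of its cones. I would regard every secondary polytope $\Sigma(B)$ inside $\R^n$ via the coordinate inclusion $\R^B\hookrightarrow\R^n$, and recall that $\fiber^{(k)}(A)$ — being the fiber polytope of $\pi\colon\level{\Delta_n}k\to\conv(\level Ak)$ — also lives naturally in $\R^n$, with the lineality space of its normal fan equal to the space of affine functions on $A$; moreover a functional $\omega\in(\R^n)^*$ induces the $\pi$-coherent hypersimplicial subdivision of $\level Ak$ that is the regular subdivision of the point configuration $\level Ak$ for the heights $S\mapsto\sum_{i\in S}\omega_i$. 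Writing $s=\max\{n-k+1,d+2\}$, the theorem then becomes: for all $\omega,\omega'\in\R^n$, $\omega$ and $\omega'$ induce the same $\pi$-coherent hypersimplicial subdivision of $\level Ak$ if and only if $\omega|_B$ and $\omega'|_B$ induce the same regular subdivision of $B$ for every $B\in\binom{[n]}{s}$.

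\emph{The direction ``$\Rightarrow$'' via restriction to faces.} For a $(k-1)$-subset $C\subseteq[n]$ put $F_C=\{x\in\level{\Delta_n}k:x_i=1\text{ for }i\in C\}$: this is a face of $\level{\Delta_n}k$ affinely isomorphic to the standard simplex on $B:=[n]\setminus C$, on which $\pi$ restricts — up to the translation by $\sum_{i\in C}a_i$ — to exactly the projection defining $\Sigma(B)$, with the induced height on the $j$-th vertex being $\omega_j$ up to an additive constant. Using the localization property of $\pi$-induced (hence $\pi$-coherent) subdivisions under restriction to faces of the source — which I would take from the preliminaries — the restriction of the coherent hypersimplicial subdivision induced by $\omega$ to $F_C$ is the regular subdivision of $B$ induced by $\omega|_B$; since this restriction depends only on the subdivision, equal coherent subdivisions force equal regular subdivisions on every $B$ of size $n-k+1$, and symmetrically — using the faces with $|I_1|=k-|B'|+1$, which project onto reflected translates of $\conv(B')$ — on every $B'$ of size at most $k+1$. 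As $s\le\max\{n-k+1,k+1\}$ whenever $k\le n-d-1$ or $k=d+1$, this establishes ``$\Rightarrow$'' in all those cases.

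\emph{The direction ``$\Leftarrow$'' and the remaining range.} The reverse implication says that every wall of the hypersecondary fan $\mathcal N(\fiber^{(k)}(A))$ lies in the union of the walls of the fans $\mathcal N(\Sigma(B))$, $|B|=s$. Reducing to $\omega,\omega'$ in adjacent maximal cones, their hypersimplicial subdivisions differ by a single flip carried by a circuit $\mu$ of $\level Ak$, and the separating wall is where $\omega\mapsto\sum_S\mu_S\sum_{i\in S}\omega_i=\sum_i\omega_i\bigl(\sum_{S\ni i}\mu_S\bigr)$ vanishes. One must show that the vector $c(\mu)\in\R^n$ with $i$-th coordinate $\sum_{S\ni i}\mu_S$ is supported on some $B\in\binom{[n]}{s}$ and is, up to scaling, an affine dependence of $B$ — that flip circuits of coherent hypersimplicial subdivisions are ``$s$-local''. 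I expect this locality statement — proved by using that every face of the hypersimplex is a product of hypersimplices over subsets (over which $\pi$ restricts to a smaller hypersimplicial projection) to confine a flip to few coordinates of $[n]$ — to be the main obstacle. Finally, the range $n-d\le k\le d$ (which forces $d+2\le n\le 2d$ and $s=d+2$, and in which no face of $\level{\Delta_n}k$ projects onto a translate or reflection of a $(d+2)$-point sub-configuration, so the argument above gives nothing) needs a direct analysis: there $\sum_{|B|=d+2}\Sigma(B)$ is a zonotope with generators the circuit vectors of the $(d+2)$-subsets of $A$, and I would show that $\fiber^{(k)}(A)$ is normally equivalent to it by computing the hypersecondary fan via the Gale dual of $\{(a_i,1)\}_{i\in[n]}$ — which has corank $n-d-1\le d-1$ in this range — and checking that every change of coherent hypersimplicial subdivision crosses one of those circuit hyperplanes.
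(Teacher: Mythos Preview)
Your reduction to normal fans is fine, but the heart of your ``$\Rightarrow$'' argument rests on a nonexistent principle. There is no ``localization property of $\pi$-induced subdivisions under restriction to faces of the source'' in the preliminaries, and in fact no such property holds: when a face $F$ of $Q$ has $\pi(F)$ \emph{not} equal to a face of $\pi(Q)$, a $\pi$-induced subdivision of $\pi(Q)$ does not canonically restrict to a $\pi|_F$-induced subdivision of $\pi(F)$. Concretely, take $d=1$, $n=4$, $k=2$, $A=\{0,1,2,3\}$, and $\omega=(0,0,10,0)$. The coherent hypersimplicial subdivision is $\{\level{[\emptyset,124]}{2},\level{[4,234]}{2}\}$; with $C=\{3\}$ the face $F_C$ projects to the interval $[2,5]\subset[1,5]$, which is not a face of the target. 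No cell $\level{[X,Y]}{2}$ of the subdivision has $X\subseteq\{3\}\subseteq Y$, so your ``restriction to $F_C$'' is empty, whereas the regular subdivision of $A_{\{1,2,4\}}=\{0,1,3\}$ by $(0,0,0)$ is the (nonempty) trivial one. The implication you want is true, but not for this reason; the cells of the hypersimplicial subdivision near $\pi(F_C)$ need not come from sub-faces of $F_C$ at all.

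The paper's proof is entirely different and avoids this issue. It never restricts to faces of the hypersimplex; instead it proves a one-step recursion (\Cref{prop:equiv2}): for $k\in[d]$ and $n>d+2$,
\[
\fiber^{(k+1)}(A)\ \cong\ \sum_{i\in[n]}\fiber^{(k)}(A_{[n]\setminus i}),
\]
and then iterates down to $k=1$. The recursion is driven by \Cref{lemma:type1}, which characterizes exactly when a full-dimensional cell $\level{[X,Y]}{k+1}$ lies in $S(\level{A}{k+1},w)$ in terms of whether $\level{[X\setminus x,Y\setminus x]}{k}$ lies in $S(\level{A_{[n]\setminus x}}{k},w)$ and in $S(\level{A}{k},w)$; the argument goes through the coherent zonotopal tiling $S(Z(A),w)$ and the uniqueness statement of \Cref{lemma:onepiece}. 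Your ``$\Leftarrow$'' sketch via flip circuits and the separate Gale-dual analysis for $n-d\le k\le d$ are both superseded by this recursion, which handles all $k\le d+1$ uniformly.
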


The paper is organized as follows: \Cref{sec:preliminaries} introduces notation and basic background on induced subdivisions in general, and hypersimplicial subdivisions in particular. In \Cref{sec:fiber} we look at coherent hypersimplicial subdivisions and hypersecondary polytopes as Minkowski sums and prove \Cref{thm:fiber-intro}, among other results.
In \Cref{sec:separation} we study the connection of hypersimplicial subdivisions with zonotopal tilings. In particular, we extend to tiles of positive dimension the concept of $A$-\emph{separated sets} introduced in \cite{GaPo}.
With this machinery we show that if all hypertriangulations of $A$ are separated then all hypersubdivisions are separated too (\Cref{coro:fine_is_enough}).
In \Cref{sec:non-separated} and \Cref{sec:baues} we prove \Cref{thm:separated-intro} and \Cref{thm:baues-intro} respectively. Finally, we briefly discuss the enumeration of hypersimplicial subdivisions of $\level{\polygon n}2$ in \Cref{sec:hypercatalan}. 

\subsection*{Acknowledgements}
We thank Alexander Postnikov for inspiring us to work on this and  Alexey Balitskiy, Pavel Galashin and Julian Wellman for comments on a first version of this paper.

\section{Preliminaries and notation}
\label{sec:preliminaries}

\subsection{Fiber polytopes}

We here briefly recall the main concepts and results on fiber polytopes. See \cite{BilStu} or \cite{Reiner} for more details.

Let $\pi: \R^n \to \R^d$ be a linear projection map. 
Let $Q\subset \R^n$ be a polytope and let $A=\pi(\vertices(Q)$). 
A $\pi$-induced subdivision of $A$ is a polyhedral subdivision $S$ (in the sense of, for example, \cite{triangbook}), such that every face of $S$ is the image under $\pi$ of a face $F$ of $Q$. 

Given a vector $w\in (\R^n)^*$ the face $Q^w$ of $Q$ selected by $w$ is the convex hull of all vertices of $Q$ which minimize $w$. A \emph{$\pi$}-coherent subdivison is a $\pi$-induced subdivision in which the faces of $Q$ are chosen ``coherently'' via a vector $w\in (\R^n)^*$. More precisely, we define the \emph{$\pi$-coherent subdivision of $A$ given by $w$} to be
\[
S(Q \stackrel{\pi}{\to} A,w)
:=
\left\{ \pi(F) : \exists \tilde w\in (\R^n)^* \text{ s.t. } \tilde w|_{\ker(\pi)} = w|_{\ker(\pi)} \text{ and } Q^{\tilde w} = F\right\}.
\]

The \emph{fiber fan} of the projection $Q \stackrel{\pi}{\to} A$ is the stratification of $(\R^n)^*$ according to what $\pi$-coherent subdivision is produced. It is a polyhedral fan with linearity space equal to 
\[
\{w \in (\R^n)^* : \ker(\pi) \subset \ker(w)\} + \{w \in (\R^n)^* : w|_Q=\text{constant}\}.
\]
As we will see below, it is the normal fan of a certain polytope $\fiber(Q \stackrel{\pi}{\to} A)$ of dimension $\dim(Q) - \dim(A)$.

To define $\fiber(Q \stackrel{\pi}{\to} A)$, we look at fine $\pi$-induced subdivisions.
A $\pi$-induced subdivision $S$ is \emph{fine} if $\dim(F)=\dim(\pi(F))$ for each of the faces $F\le Q$ whose images are cells in $S$
Put differently, a fine $\pi$-induced subdivision is the image of a subcomplex of $Q$ that is a section of $\pi:Q\to \conv(A)$. To each fine $\pi$-induced subdivision $S$ we associate 
the following point:
\[
\GKZ(S):= \sum_{F\le Q \atop \pi(F)\in S} \frac{\vol(\pi(F))}{\vol(A)} {\bf c}(F) \in \R^n, 
\]
where ${\bf c}(F)$ denotes the centroid of $F$.

\begin{definition}
The \emph{fiber polytope} of the projection $\pi: Q \to \conv(A)$ is the convex hull of the vectors $\GKZ(S)$ for all fine $\pi$-induced subdivisions. We denote it $\fiber(Q\to A)$.
\end{definition}

The main property of the fiber polytope is the following result of Billera and Sturmfels. In fact, for the purposes of this paper this theorem can be taken as a \emph{definition} of the fiber polytope, since our results are mostly not about the polytope but about its normal fan (see, eg \Cref{sec:fiber}).

\begin{theorem}[Billera and Sturmfels~\cite{BilStu}]
$\fiber(Q\to A)$ is a polytope of dimension $\dim(Q) - \dim(A)$ whose normal fan equals the fiber fan.
\end{theorem}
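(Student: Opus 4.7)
The plan is to follow the original Billera--Sturmfels strategy, which goes through the Minkowski-integral description of the fiber polytope. First I would show that the convex hull appearing in the definition equals
\[
\frac{1}{\vol(A)} \int_{\conv(A)} \bigl(\pi^{-1}(x) \cap Q\bigr)\, dx,
\]
where the integral is taken in the Minkowski sense: a point in it is of the form $\frac{1}{\vol(A)}\int_{\conv(A)} s(x)\, dx$ for some measurable section $s : \conv(A) \to Q$ of $\pi$. The verification that the right-hand side is a polytope, and the comparison with $\conv\{\GKZ(S)\}$, reduces to checking that the extreme sections (those minimizing a linear functional) are piecewise linear and come from fine $\pi$-induced subdivisions; once this is done, each fine $\pi$-induced subdivision $S$ yields the section whose integral is precisely $\GKZ(S)$ by the centroid formula.

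Next I would compute the face minimized by an arbitrary $w \in (\R^n)^*$. Since integration is linear,
\[
\langle w, \tfrac{1}{\vol(A)} \int s(x)\,dx\rangle = \tfrac{1}{\vol(A)}\int \langle w, s(x)\rangle\,dx,
\]
so minimizing $w$ amounts to choosing, independently for almost every $x$, the point of $\pi^{-1}(x) \cap Q$ on which $w$ is smallest. That pointwise minimum is achieved on the face $Q^{\tilde w}$ for any $\tilde w$ that agrees with $w$ on $\ker(\pi)$ (the part of $w$ orthogonal to $\ker(\pi)$ only shifts $\langle w, s(x)\rangle$ by a function of $x$), and the projected images of these fiberwise-minimizing faces assemble into exactly the coherent subdivision $S(Q \stackrel{\pi}{\to} A, w)$. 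This identifies the normal fan of $\fiber(Q\to A)$ with the fiber fan.

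For the dimension, I would read it off the linearity space of the fiber fan as already described in the excerpt: two functionals $w, w'$ produce the same coherent subdivision when their difference is constant on each fiber $\pi^{-1}(x)\cap Q$, i.e.\ it factors through $\pi$ up to a function that is constant on $Q$. The quotient of $(\R^n)^*$ by this linearity space has dimension $\dim(Q) - \dim(A)$, which gives the asserted dimension of $\fiber(Q\to A)$.

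The main technical obstacle is the first step: carefully proving that the Minkowski integral is a well-defined polytope and that its vertices are precisely the $\GKZ(S)$ coming from fine $\pi$-induced subdivisions. This requires handling measurable sections, showing that extremal sections are piecewise linear over a subdivision of $\conv(A)$ whose cells lift to faces of $Q$ of the same dimension, and verifying that the centroid formula computes the integral exactly. Once this structural lemma is in place, the normal-fan and dimension statements follow from the linearity argument above.
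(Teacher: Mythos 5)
The paper does not prove this theorem: it cites it directly from Billera and Sturmfels~\cite{BilStu} and even remarks that it may be taken as the \emph{definition} of the fiber polytope for the purposes at hand. Your sketch correctly reconstructs the original Billera--Sturmfels argument via the Minkowski integral $\frac{1}{\vol(A)}\int_{\conv(A)}(\pi^{-1}(x)\cap Q)\,dx$, the pointwise-minimization identification of the normal fan with the fiber fan, and the dimension count from the linearity space, so it matches the approach of the cited source.
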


In particular, the face lattice of $\fiber(Q\to A)$ is isomorphic to the poset of $\pi$-coherent subdivisions ordered by refinement. For example, vertices of $\fiber(Q\to A)$ correspond bijectively to fine $\pi$-coherent subdivisions.

Two cases of this construction are of particular importance. Let $A=\{a_1,\dots,a_n\}\subset \R^d$ be a configuration of $n$ points. Then:
\begin{enumerate}
\item If we let $\pi: \Delta_n \to \conv(A)$ be the affine map $e_i\mapsto a_i$
bijecting vertices of $\Delta_n$ to $A$, then all the polyhedral subdivisions of $A$ are $\pi$-induced, and the coherent ones are usually called \emph{regular} subdivisions of $A$. The corresponding fiber polytope is the \emph{secondary polytope} of $A$ and we denote it $\level{\fiber}{1}(A)$ (in the next sections we define $\level{\fiber}{k}(A)$ for other values of $k$).

\item Let
\[
Z(A) = \sum_{i} \conv\{0,(a_i,1)\} \subset\R^{d+1}
\]
be the zonotope generated by the \emph{vector} configuration $A\times\{1\}\subset \R^{d+1}$.
The $\pi$ in the previous case extends to a linear map
$\pi: [0,1]^n \to Z(A)$ still sending $e_i\mapsto a_i$. Then the $\pi$-induced subdivisions are precisely the \emph{zonotopal tilings} of $Z(A)$. The corresponding fiber polytope is the \emph{fiber zonotope} of $Z(A)$ (or of $A)$ and we denote it $\fiber^Z(A)$.
\end{enumerate}

\subsection{The Baues problem}

The poset of all $\pi$-induced subdivisions (excluding the trivial subdivision for technical reasons) is called the \emph{Baues poset} of the projection and we denote it $\baues(Q\to A)$. The subposet of $\pi$-coherent subdivisions is denoted $\baues_{\coh}(Q\to A)$. The \emph{Baues problem} is, loosely speaking, the question of how similar are $\baues(Q\to A)$ and $\baues_{\coh}(Q\to A)$, formalized as follows: 

To every poset $\mathcal P$ one can associate a simplicial complex called the \emph{order complex of $\mathcal P$} by using the elements of $\mathcal P$ as elements and chains in the poset as simplices. In particular, one can speak of the \emph{homotopy type} of $\mathcal P$ meaning that of its order complex. Similarly, an order preserving map of posets
\[
f: \mathcal P_1 \to \mathcal P_2
\]
induces a simplicial map between the corresponding order complexes, and one can speak of the homotopy type of $f$.

The prototypical example is the following: if $\mathcal P$ is the face poset of a polyhedral complex $\mathcal C$, then the order complex of $\mathcal P$ is (isomorphic to) the barycentric subdivision of $\mathcal C$. In particular,
since $\baues_{\coh}(Q\to A)$ is the face poset of the polytope $\fiber(Q\to A)$, 
it  is homotopy equivalent (in fact, homeomorphic) to a sphere of dimension $\dim(Q) - \dim(A) -1$.

\begin{question}[Baues Problem]
Under what conditions is the inclusion $\baues_{\coh}(Q\to A) \hookrightarrow \baues(Q\to A)$ a homotopy equivalence?
\end{question}

See \cite{Reiner} for a (not-so-recent) survey about this question, and \cite{Santos2006,Liu2017} for examples where the answer is no and having $Q$ a simplex and a cube, respectively. 

\subsection{Cyclic polytopes}
Cyclic polytopes are a family of polytopes of particular interest for this manuscript and are defined as follows. The trigonometric moment curve (also known as the Carath\'eodory curve), is parametrized by
\[
\phi_d: \enspace t \to (\sin(t),\cos(t),\sin(2t),\cos(2t),\dots)\in \R^d.
\]
Let $t_1,\dots,t_n$ be $n$ cyclically equidistant numbers in $[0,2\pi)$, for example, $t_i = \frac{2\pi(i-1)}{n}$. 
The \emph{cyclic polytope} $\cyclic nd$ is the convex hull of $\phi(t_1),\dots,\phi(t_n)$. 

The combinatorics of the cyclic polytope can be nicely described in terms of the circuits of the corresponding oriented matroid. Namely, all circuits are of the form $(\{a_1,a_3,\dots\}, \{a_2,a_4,\dots\})$ such that $a_1<a_2<\dots<a_{d+2}$ and their opposites (giving the label $i$ to the vertex $\phi(t_i)$). 

Cyclic polytopes can also be defined by using the polynomial moment curve $t\to (t,t^2,\dots,t^d)$ instead of the trigonometric moment curve and the combinatorial type remains the same. However, the coherence of subdivisions and hence fiber polytopes depend also on the embedding (see \Cref{ex:hypersecondary}). When using the trigonometric moment curve in even dimension the cyclic polytope has more symmetry. That is, it is invariant under the cyclic group action on the vertices. When $d=2$ the cyclic polytope $\cyclic n2$ is a is a regular polygon and we abbreviate it by $\polygon n$.

The Baues problem is known to have positive answer for cyclic polytopes in the following two cases:

\begin{theorem}[\cite{RaSa,SturmfelsZiegler1993}]
\label{thm:RaSaStZi}
Let $n >d \in \N$. Then, the following two cases of the Baues question have a positive answer:
\begin{itemize}
\item When $Q=\Delta_n$ and $A=\cyclic{n}{d}$ is the \emph{cyclic polytope} of dimension $d$ with $n$ vertices \cite{RaSa}.
\item When $Q=[0,1]^n$ and $A=Z(\cyclic{n}{d})$ is the \emph{cyclic zonotope} of dimension $d+1$ with $n$ generators \cite{SturmfelsZiegler1993}.
\end{itemize}
\end{theorem}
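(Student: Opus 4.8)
The plan is to treat the two cases separately, since they live in different ambient worlds --- polyhedral subdivisions of a point configuration in the first, zonotopal tilings (equivalently, liftings of an oriented matroid) in the second --- while using the same high-level strategy in both. Since $\baues_{\coh}(Q\to A)$ is always the boundary of the $(n-d-1)$-dimensional fiber polytope, hence a sphere $S^{n-d-2}$, a positive answer amounts to showing $\baues(Q\to A)\simeq S^{n-d-2}$ compatibly with the inclusion. I would obtain this by constructing an order-preserving, iterable deformation of $\baues(Q\to A)$ that pushes every element toward a coherent one, using the rigid combinatorics of the moment curve so that in the limit the whole poset collapses onto $\baues_{\coh}(Q\to A)$; where such a deformation is awkward to write down directly, one can instead produce a poset map from $\baues$ to a convenient subposet whose fibers are contractible and invoke Quillen's Theorem A.

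For the simplex case $Q=\Delta_n$, $A=\cyclic nd$, I would induct on $n-d$. The base case $n=d+2$ is immediate: $\cyclic{d+2}{d}$ has a single Radon circuit, hence exactly two triangulations, both coherent, so $\baues=\baues_{\coh}=S^0$. For the inductive step the idea is to peel off the last vertex $n$, exploiting the beneath--beyond relation between $n$ and $\cyclic{n-1}{d}$ together with the fact that the vertex figure of $\cyclic nd$ at $n$ is combinatorially $\cyclic{n-1}{d-1}$: every induced subdivision of $\cyclic nd$ induces, via the beneath--beyond decomposition, an induced subdivision of $\cyclic{n-1}{d}$, while its link at $n$ is an induced subdivision of a $(d-1)$-dimensional cyclic polytope. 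Filtering $\baues(\Delta_n\to\cyclic nd)$ according to how $n$ is used, one would identify each stratum, and each intersection of strata, up to homotopy either with a Baues poset of strictly smaller parameters or with a contractible set, and then reassemble $S^{n-d-2}$ by a nerve-type (or inductive Mayer--Vietoris) argument, with the coherent subdivisions tracking the standard cell decomposition of the sphere. This is the Rambau--Santos theorem \cite{RaSa}.

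For the cube case $Q=[0,1]^n$, $A=Z(\cyclic nd)$, the first step is to pass to oriented-matroid language: by the Bohne--Dress theorem (see also \cite{BKS}), the zonotopal tilings of $Z(\cyclic nd)$ --- which are exactly the $\pi$-induced subdivisions here --- are in poset isomorphism with the single-element liftings of the oriented matroid of $\cyclic nd\times\{1\}$, equivalently with the single-element extensions of its rank-$(n-d-1)$ dual, i.e.\ with the \emph{extension space} of the alternating (cyclic) oriented matroid; under this correspondence the coherent tilings are exactly the realizable --- indeed Euclidean --- extensions, which are the faces of the fiber zonotope $\fiber^Z(\cyclic nd)$ and form the sphere $S^{n-d-2}$. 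It thus suffices to prove that the extension space of the alternating oriented matroid is homotopy equivalent to a sphere. Here I would use that localizations of the cyclic oriented matroid obey a rigid sign-alternation rule along the linear order of the ground set, so that every extension admits a canonical one-parameter sweep toward a realizable one; organizing these sweeps into an order-preserving deformation retraction of the extension space onto its realizable part produces the sphere. This is the cyclic case of the Sturmfels--Ziegler extension-space theorem \cite{SturmfelsZiegler1993}.

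The main obstacle, in both halves, is that for $d\ge 2$ the Baues poset is strictly larger than $\baues_{\coh}$, and --- as \Cref{thm:separated-intro} below exhibits for $k=2$ and odd $d$, and as has long been known for $d=1$ --- it may contain subdivisions and tilings that do not even lift to the next dimension; hence there is no hope of a naive retraction that simply coarsens every object to a regular one. The retraction must instead be built from the fine combinatorial data --- bistellar flips and the moment-curve order in the simplex case, alternating localizations in the zonotopal case --- and the technical heart of each proof is to check that the resulting map is well defined on non-regular objects, order-preserving, and compatible with the face relations, so that the induction (respectively, the deformation retraction) actually closes up. We refer to \cite{RaSa,SturmfelsZiegler1993} for the complete arguments.
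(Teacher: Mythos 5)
The paper states this result purely as a citation of \cite{RaSa} and \cite{SturmfelsZiegler1993} and offers no proof of its own, and your write-up likewise defers all the technical content to those same two references, so there is nothing substantive to compare beyond the strategic sketch. Your high-level descriptions --- induction on $n$ by peeling off the last vertex of the cyclic polytope with Babson/Quillen-type fiber arguments in the simplex case, and passage via Bohne--Dress to the extension space of the alternating oriented matroid in the zonotopal case --- are consistent with what those papers actually do, so this is essentially the same (citation-based) approach as the paper.
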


\subsection{Hypersecondary polytopes.}
Let $A=\{a_1,\dots, a_n\}\in\R^d$ be a point configuration. For each $k=1,\dots,n-1$ we consider the following $k$-th deleted (Minkowski) sum of $A$ with itself, which we denote $A^{(k)}$:
\[
A^{(k)} := \left \{a_{i_1} + \dots + a_{i_k} \in \R^d : \{i_1,\dots, i_k\} \in \binom{[n]}{k}\right\}.
\]

The $k$-th deleted sum of the standard $(n-1)$-simplex $\Delta_{n} :=\conv(e_1,\dots,e_n)$ equals the $k$-th hypersimplex of dimension $n-1$:
\[
\level{\Delta_n}k := \conv\left\{\sum_ {i\in B} e_i : B\in\binom{[n]}{k}\right\} = [0,1]^n \cap \left\{x:\sum_{i=1}^n x_i = k\right\}.
\]
(Observe that the notation $\level{\Delta_n}k$ here is an abbreviation of $\conv(\vertices(\Delta_n)^{(k)}))$.

As mentioned above, the projection $\R^n\to \R^d \times \{1\}$ that sends the vertices of $\Delta_n$ to $A$ extends to a linear map $\R^{n}\to \R^{d+1}$ that sends the unit cube $[0,1]^n$ to the zonotope 
$Z(A)$. In turn, this linear map restricts to an affine map sending each $\level{\Delta_n}k\subset \R^n$ to $A^{(k)}\subset \R^d \times \{k\}$.  We use the same letter $\pi$ for all these projections.

\begin{definition}
The $\pi$-induced subdivisions of the projection $\pi: \level{\Delta_n}k \to A^{(k)}$ are called \emph{hypersimplicial subdivisions of level $k$} of $A$, or just  \emph{hypersimplicial subdivisions} of $A^{(k)}$. Fine hypersimplicial subdivisions are called \emph{hypertriangulations}. We denote $\level{\baues}k(A)$  and $\level\fiber{k}(A)$ the corresponding Baues poset and fiber polytope, and call the latter the  \emph{$k$-th hypersecondary polytope} of $A$. We denote $\level{\baues_{\coh}}k(A)$ for the coherent subdivisions in $\level{\baues}{k}(A)$.
\end{definition}

\begin{remark}
The Baues poset $\level{\baues}k(A)$ only depends on the oriented matroid of $A$ while $\level{\baues_{\coh}}k(A)$ does depend on the embedding of the oriented matroid. 
\end{remark}

\subsection{Lifting subdivisions}

By construction, the intersection of any zonotopal tiling of $Z(A)$ with the hyperplane $\sum x_i = k$ is a hypersimplicial subdivision of $A^{(k)}$.  But the converse is in general not true. Not every hypersimplicial subdivision of $A^{(k)}$ ``extends'' to a zonotopal tiling of $Z(A)$. Following \cite{OMbook,Postnikov,OMtri} the ones that extend are called \emph{lifting} hypersimplicial subdivisions. The following are examples of them:

\begin{itemize}
\item For a cyclic polytope $\cyclic nd$, all triangulations in the standard sense (that is, all hypertriangulations of $\level{\cyclic nd}1$) are lifting~\cite{RaSa}. The same is not known for non-simplicial subdivisions.

\item For arbitrary $k$ and a convex $n$-gon $\polygon n$, all hypertriangulations of $\level{\polygon n}k$ are lifting~\cite{Galashin}. The same result for all hypersimplicial subdivisions has recently been provedin \cite{BalWel}.
\end{itemize}

Non-lifting triangulations of $A^{(1)}$ are not known in dimension two but easy to construct in dimension three or higher. For example, if a subdivision $S$ of $A$ has the property that its restriction to some subset $B$ of $A$ cannot be extended to a subdivision of $B$, then $S$ is non-lifting. Such subdivisions (and triangulations) exist when $A$ is the vertex set of a triangular prism together with any point in the interior of it, the vertex set of a $4$-cube, or the vertex set of $\Delta_4\times \Delta_4$, among other cases (see, e.g.,  \cite[Chapter~5]{OMtri}, or 
\cite[Proof (10) in Sect.~7.1.2, ]{triangbook}).

To better understand lifting subdivisions, let us look at zonotopal tilings of $Z(A)$. 
We denote $\baues^Z (A)$, $\baues_{\coh}^Z (A)$ and $\fiber^Z (A)$ for the poset of zonotopal tilings, its subposet of coherent tilings and the secondary zonotope of $Z(A)$ respectively.
We call any subset of $[n]$ a \emph{point}, since it represents an element of the point configuration $\sum_{i\in [n]} \{0, a_i\}$.
A \emph{tile} is a poset interval $[X,Y]$ of the boolean poset $2^{[n]}$, where $X\subseteq Y$. To be precise, $[X,Y] := \{I \subseteq [n] \mid X\subseteq I \subseteq Y\}$. Geometrically, we think of $[X,Y]$ as the zonotope $X + Z(Y\setminus X)$,
but we prefer the combinatorial notation where the tile is described as the set of vertices of $[0,1]^n$ of which it is the projection. 

Every tile is a cell in a coherent zonotopal tiling of $Z(A)$, by letting $w(j)$ be $-1$, $0$ or $1$ depending on whether $j$ is in $X$, $Y\setminus X$, or none of them. Indeed, this $w$ gives value at least $-|X|$ to every point in $Z(A)$, with equality if and only if the point belongs to $[X,Y]$.

Turning our attention to hypersimplices, observe that every face of the hypersimplex $\level {\Delta_n} k$ is the intersection of a face of $[0,1]^n$ with the hyperplane $\left\{x:\sum_{i=1}^n x_i = k\right\}$. Therefore we can denote the projection under $\pi$ of any face of $\level {\Delta_n} k$ by
\[
\level{[X,Y]}{k} := [X,Y]\cap \left(\R^d\times\{k\}\right) = \{B \mid X\subseteq B \subseteq Y \quad |B| = k\}.
\]
By definition, a subdivision of $\level Ak$ is hypersimiplicial if and only if all of its cells are of the form $\level{[X,Y]}k$. A hypersimplicial subdivision is fine if for every cell $[X,Y]^k$ we have that $Y/X$ is an affine basis in $A$. This spells out the following relation with zonotopal tilings:

\begin{proposition}
\label{prop:intersecting}
For every configuration $A$ of $n$ points and every $k\in [n-1]$:
\begin{enumerate}
\item Intersection of zonotopal tilings with the hyperplane at level $k$ induces an order-preserving map
\[
\level{r}{k}: \baues^Z(A) \to \level\baues{k}(A).
\]
\item The normal fan of $\fiber^Z(A)$ refines the normal fan of $\level\fiber{k}(A)$.
\end{enumerate}
\end{proposition}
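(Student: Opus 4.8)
The plan is to deduce both statements from a single observation: intersecting with the hyperplane $H_k := \{x : \sum x_i = k\}$ commutes with the projection $\pi$ and with the operation of selecting faces. For part (1), I would first check that if $T$ is a zonotopal tiling of $Z(A)$, then $\{C \cap (\R^d\times\{k\}) : C \in T\}$ is a polyhedral subdivision of $A^{(k)}$: the cells cover $\conv(A^{(k)}) = \level{\Delta_n}{k}$ projected down, they intersect properly because the cells of $T$ do, and each cell is of the form $\level{[X,Y]}{k}$ by the description of faces of $\level{\Delta_n}{k}$ as intersections of faces of $[0,1]^n$ with $H_k$ recalled just before the statement. Hence the intersection lands in $\level\baues{k}(A)$. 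One should also confirm the map is well-defined into the Baues poset proper, i.e.\ that the trivial subdivision is the only one whose image is trivial (or simply that we restrict to the non-trivial part); this is a small point. Order-preservation is immediate: if $T'$ refines $T$, every cell of $T'$ is contained in a cell of $T$, and the same containment survives intersection with $H_k$.

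For part (2), the cleanest route is to use the characterization of the fiber polytope via its normal fan, the fiber fan, as in the Billera--Sturmfels theorem. The normal fan of $\level\fiber{k}(A)$ stratifies $(\R^n)^*$ by which $\pi$-coherent hypersimplicial subdivision of $\level{\Delta_n}{k}$ a functional $w$ produces, and likewise for $\fiber^Z(A)$ with zonotopal tilings of $[0,1]^n$. I would show that the coherent subdivision $S(\level{\Delta_n}{k}\xrightarrow{\pi}A^{(k)}, w)$ equals the intersection with $H_k$ of the coherent zonotopal tiling $S([0,1]^n\xrightarrow{\pi}Z(A), w)$ for the \emph{same} $w$. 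This follows because a face $F$ of $[0,1]^n$ is selected by some lift $\tilde w$ with $\tilde w|_{\ker\pi} = w|_{\ker\pi}$ if and only if the face $F\cap H_k$ of $\level{\Delta_n}{k}$ is selected by the restriction of that same $\tilde w$ (the kernel of $\pi$ restricted to the affine hull of $H_k$ is the same linear space, up to the lineality already quotiented out). Consequently, any two functionals that give the same zonotopal tiling give the same hypersimplicial subdivision at level $k$, so each cone of the fiber fan of $\fiber^Z(A)$ lies inside a cone of the fiber fan of $\level\fiber{k}(A)$ — which is precisely the statement that the former normal fan refines the latter. By general facts about normal fans, part (2) also re-proves the order-preserving map on coherent subdivisions claimed implicitly, consistently with part (1).

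The main obstacle I anticipate is bookkeeping about lineality spaces and about the degenerate cells: I must be careful that "intersection with $H_k$" of a cell $[X,Y]$ with $|Y\setminus X|$ small can produce a lower-dimensional cell $\level{[X,Y]}{k}$ (when $|X| > k$ or $|Y| < k$ it is empty, and in general its dimension drops), and that such cells still assemble into a bona fide subdivision rather than merely a cover — this is where properness of the intersection must be argued from properness in $Z(A)$ together with the fact that $H_k$ is a single affine hyperplane, so intersecting a polyhedral complex with it again yields a polyhedral complex. The commutation of face-selection with the hyperplane section is conceptually the heart of part (2) and should be stated as a short lemma; everything else is routine.
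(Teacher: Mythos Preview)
Your proposal is correct and follows essentially the same route as the paper: for part (1) you intersect each cell with the level-$k$ hyperplane and observe that this yields a hypersimplicial subdivision (with refinement preserved), and for part (2) your key claim that $S(\level{\Delta_n}{k}\xrightarrow{\pi}A^{(k)}, w)$ equals the level-$k$ slice of $S([0,1]^n\xrightarrow{\pi}Z(A), w)$ for the same $w$ is exactly the identity $\level{S(Z(A),w)}{k} = S(\level Ak, w)$ that the paper invokes. You simply spell out more of the bookkeeping (properness, lineality, degenerate cells) than the paper's terse proof does.
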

\begin{proof}
For the first claim, notice that the intersection of a zonotopal tiling $S = \{[X_i,Y_i] \mid i \in I\}$ with the hyperplane $\R^d\times k$ gives the subdivision
\[
\level rk (S) := \left\{\level{[X_i,Y_i]}k \mid i\in I \enspace |X_i|<k<|Y_i|\right\}\cup\left\{X\in {n \choose k}\cap S\right\}
\] 
of $\level Ak$, which clearly is hypersimplicial. We denote $\level rk (S)$ as $\level Sk$ for simplicity. 
The second claim follows from the fact that $\level{S(Z(A),w)}{k} = S(\level Ak, w)$ for every $w\in (\R^n)^*$. 
\end{proof}
We say that a tile $[X,Y]$ \emph{covers level} $k$, if $|X|<k<|Y|$. In other words, $[X,Y]$ covers level $k$ if $\level{[X,Y]}k$ is of positive dimension.

\begin{example}
\label{ex:non_coherent}
Consider the regular hexagon $\polygon 6$. \Cref{fig:not_coherent} shows a hypersimplicial subdivision of $\level {\polygon 6}2$ whose set of facets are the triangles $\level{[\emptyset,123]}2$, $\level{[\emptyset,135]}2$, $\level{[\emptyset,156]}2$, $\level{[\emptyset,345]}2$, $\level{[1,1236]}2$,
$\level{[1,1356]}2$, $\level{[3,1235]}2$, $\level{[3,2345]}2$, $\level{[5,1345]}2$ and $\level{[5,1456]}2$. The colour of the triangle $\level{[X,Y]}2$ is dark gray if $X =\emptyset$ and white if $|X|=1$, which agrees with the colouring of vertices of the corresponding plabic graph (see \cite{Galashin}).

\begin{figure}[h]
	\centering
		\includegraphics[width=0.4\textwidth]{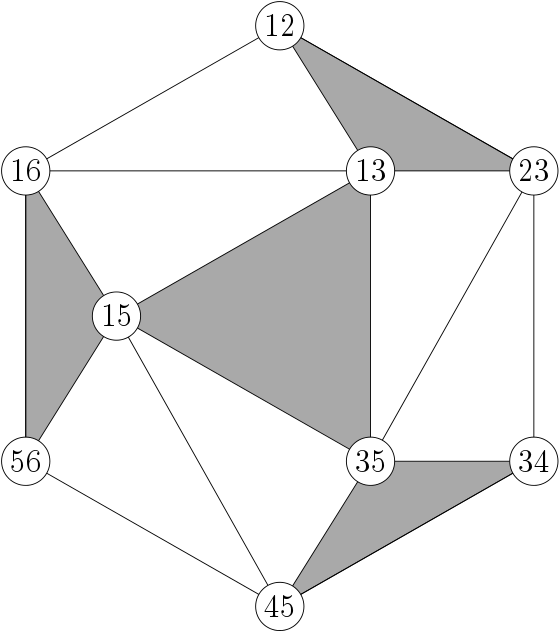}
	\caption{A non-coherent hypersimplicial subdivision of $\level{\polygon 6}2$.}
	\label{fig:not_coherent}
\end{figure}

This subdivision is not coherent. To see this, suppose there is a lifting vector $w\in(\R^*)^6$ whose regular subdivision is this. Then notice that the presence of the edge $\level{[1,136]}2$ implies $w_3+w_6<w_2+w_5$, the presence of the edge $\level{[3,235]}2$ implies $w_2+w_5<w_1+w_4$ and the presence of the edge $\level{[5,145]}2$ implies $w_1+w_4<w_3+w_6$, together forming a contradiction. This contrasts the fact that every subdivision of a convex polygon is regular. 
\end{example}

\subsection{Lifting subdivisions via Gale transforms. The Bohne-Dress Theorem}

As a general reference for the contents of this section we recommend the book \cite{triangbook}, more specifically Chapters 4, 5 and 9.

A \emph{Gale transform} of a point configuration $A=\{a_1,\dots,a_n\}$ is a vector configuration $\Gale{A}=\{a^*_1,\dots, a^*_n\}$ with the property that a vector $(\lambda_1,\dots,\lambda_n)\in \R^n$ is the coefficient vector of an affine dependence in $A$ if and only if it is the vector of values of a linear functional on $\Gale{A}$. The definition implicitly assumes a bijection between $A$ and $\Gale{A}$ given by the labels $1,\dots,n$. 

Gale duality is an involution: the Gale duals of a Gale dual of $A$ are linearly isomorphic to $A$ when considering $A$ as a vector configuration via \emph{homogenization}, by which we mean looking at affine geometry on the \emph{points} $a_1,\dots,a_n$ as linear algebra on the \emph{vectors} $(a_1,1), \dots, (a_n,1)$. In fact, if $A$ and $B$ are Gale duals to one another then their oriented matroids are dual, which implies that their ranks add up to $n$. In our setting where $A$ has affine dimension $d$ and hence rank $d+1$, its Gale duals have rank $n-d-1$.

The normal fan of the secondary polytope $\level{\fiber}{1}(A)$ of $A$ lives naturally in the ambient space of $\Gale{A}$: it equals the common refinement of all the complete fans with rays taken from $\Gale{A}$. 
Put differently, vectors $w \in \espan(\Gale{A})$ 
are in natural bijection to lifting functions $A\to \R$ (where the latter, which forms a linear space isomorphic to $\R^n$, is considered modulo the linear subspace of affine functions restricted to $A$). 
Under this identification, $w_1$ and $w_2$ define the same coherent subdivision of $A$ if and only if they lie in exactly the same family of cones among the finitely many cones spanned by subsets of $B$. 
The precise combinatorial rule to construct the coherent subdivision $S=S(\Delta_n \stackrel{\pi}{\to} A,w)$ of $A$ induced by a $w\in \espan( \Gale{A} )$ is: \emph{a subset $Y\subset [n]$ is a cell in $S$ if and only if $w$ lies in the relative interior of $[n]\backslash Y$.}

This rule can be made purely combinatorial as follows. 
Instead of starting with a vector $w\in \espan(\Gale{A})$, let $\M^*(A)$ be the oriented matroid of $\Gale{A}$ and let $\M'$ be a \emph{single-element extension} of $\M^*(A)$. 
That is, $\M'$ is an oriented matroid of the same rank as $\M$ on the ground set $[n]\cup\{w\}$ and such that $\M'$ restricted to $[n]$ equals $\M^*(A)$. 
Any vector $w\in \espan( \Gale{A})$ induces such an extension, but the definition is more general since $\M'$ needs not be realizable, or it may be realizable but not extend the given realization $\Gale{A}$ of $\M^*(A)$. 
Yet, any such extension $w$ allows to define a subdivision $S(w)$ of $A$ as follows.

\begin{proposition}
\label{prop:lifting}
With the notation above, the following rules define, respectively, a polyhedral subdivision $\level{S}{1}(A,w)$ of $A$ and a zonotopal tiling $\level{S}{Z}(A,w)$ of $Z(A)$:
\begin{enumerate}
\item A  subset $Y\subset [n]$ is a cell in $\level{S}{1}(A,w)$ if and only if $([n]\setminus Y, \{w\})$ is a vector in the oriented matroid $\M'$. 

\item An interval $[X,Y]$ is a tile in $\level{S}{Z}(A,w)$ if and only if $([n]\setminus Y, X \cup \{w\})$ is a vector in the oriented matroid $\M'$. 
\end{enumerate}
\end{proposition}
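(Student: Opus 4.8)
The plan is to transport the statement to the Gale dual and then quote two classical facts that both hold for arbitrary, not necessarily realizable, oriented matroids: the description of lifting subdivisions via one-element liftings, and the Bohne--Dress theorem on zonotopal tilings. Write $\M(A)$ for the oriented matroid of the homogenized configuration $\{(a_i,1)\}_{i\in[n]}$, of rank $d+1$, so that $\M^*(A)$ is its dual, of rank $n-d-1$. Since one-element extensions are exactly the duals of one-element liftings, the extension $\M'$ of $\M^*(A)$ is the same datum as the oriented matroid $\widehat\M := (\M')^*$ on $[n]\cup\{w\}$: it has rank $d+2$ and, because $\M'$ is rank-preserving (so $w$ is not a coloop of $\M'$, equivalently not a loop of $\widehat\M$), it satisfies $\widehat\M/w = (\M'\setminus w)^* = \M(A)$. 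In other words $\widehat\M$ is a one-element lifting of $\M(A)$. Finally, vectors of $\M'$ are exactly covectors of $\widehat\M$, so the two rules in the statement read: $Y$ is a cell of $\level{S}{1}(A,w)$ iff $([n]\setminus Y,\{w\})$ is a covector of $\widehat\M$, and $[X,Y]$ is a tile of $\level{S}{Z}(A,w)$ iff $([n]\setminus Y, X\cup\{w\})$ is a covector of $\widehat\M$.

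In this dualized form, the first rule is the standard combinatorial description of the \emph{lifting subdivision} of $A$ determined by $\widehat\M$, whose cells are the ``lower faces'' of the lifted configuration; for realizable $\widehat\M$ this is exactly the rule ``$Y$ is a cell iff $w$ lies in the relative interior of $\cone\{a^*_j : j\notin Y\}$'' recalled just before the statement, and its oriented-matroid version is part of the basic theory of triangulations of oriented matroids (see \cite{triangbook,OMtri}). The second rule is precisely the Bohne--Dress description of the zonotopal tiling of $Z(A)$ attached to the lifting $\widehat\M$ (see \cite{OMbook}, and also \cite{triangbook}): a covector $\tau$ of $\widehat\M$ with $w\in\tau^-$ is read off as the tile $X + Z(Y\setminus X)$, where $X=\tau^-\cap[n]$ are the generators displacing the base point, $Y\setminus X=\tau^0$ is the linear span of the tile, and $[n]\setminus Y=\tau^+$ are the unused generators; for realizable $\widehat\M$ this is the classical fact that the tiling is the projection of the lower faces of a lifted zonotope in $\R^{d+2}$. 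The two rules are mutually consistent, as they must be: restricting (2) to the intervals $[\emptyset,Y]$ and slicing the tiling at level $1$ as in \Cref{prop:intersecting} returns the subdivision $\level S1(A,w)$ of $A^{(1)}=A$, so it would in fact suffice to establish either rule and deduce the other.

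The only point that requires real work — and the main obstacle — is that $\M'$, hence $\widehat\M$, is allowed to be non-realizable, so one cannot literally ``lift and project''. Thus both invoked results must be used in their oriented-matroid form, and what has to be verified, working purely from the vector/covector axioms of $\widehat\M$, is that the family of subsets $Y$ (respectively intervals $[X,Y]$) singled out by the rule is closed under passing to faces, covers $\conv(A)$ (respectively $Z(A)$) exactly once, and has the intersection property, i.e.\ is a polyhedral subdivision in the sense of \cite{BilStu,BKS}. Each of these reduces to a covector/cocircuit elimination argument in $\widehat\M$ together with the non-loop property of $w$; carrying this out is exactly the content of the Bohne--Dress theorem and of the theory of one-element liftings of oriented matroids, which we therefore quote rather than reprove. (The degenerate case in which $w$ is a loop of $\M'$, such as $w=0$ for a realizable extension, makes both rules produce only the trivial subdivision, still in accordance with the statement.)
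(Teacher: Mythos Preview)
The paper does not give a proof of this proposition; it is stated as background material, with the general reference to \cite{triangbook} (Chapters 4, 5 and 9) given at the start of the subsection, and is immediately followed by the Bohne--Dress Theorem (\Cref{thm:BohneDress}), also stated without proof. Your proposal correctly supplies the argument that the paper leaves implicit: dualize the single-element extension $\M'$ of $\M^*(A)$ to a one-element lifting $\widehat\M=(\M')^*$ of $\M(A)$, use that vectors of $\M'$ are covectors of $\widehat\M$, and then read off the two rules as, respectively, the standard description of a lifting subdivision and the Bohne--Dress bijection, both in their oriented-matroid form. This is precisely the content of the references the paper cites, so your approach is correct and matches the paper's intended treatment.
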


By construction, $\level{S}{1}(A,w)$ is the slice at height $1$ of $\level{S}{Z}(A,w)$. In fact:

\begin{theorem}[Bohne-Dress Theorem]
\label{thm:BohneDress}
The construction of \Cref{prop:lifting}(2) is a bijection (and a poset isomorphism, with the weak map order on extensions of $\M^*(A)$) between one-element extensions of $\M^*(A)$ and zonotopal tilings of $Z(A)$. In particular, lifting subdivisions of $\level{A}{1}$ are precisely the ones that can be obtained by the construction in \Cref{prop:lifting}(1).
\end{theorem}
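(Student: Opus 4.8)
The plan is to treat the forward direction as given and to concentrate on the bijection and the corollary. By \Cref{prop:lifting}, a one-element extension $\M'$ of $\M^*(A)$ on the ground set $[n]\cup\{w\}$ already determines a zonotopal tiling of $Z(A)$, which I will write $\mathcal T(\M'):=\{[X,Y] \mid ([n]\setminus Y, X\cup\{w\})\text{ is a vector of }\M'\}$; the content of the theorem is that $\M'\mapsto\mathcal T(\M')$ is a bijection onto the set of all zonotopal tilings of $Z(A)$ and that it intertwines the weak-map order on extensions with the refinement order on tilings. As a warm-up and to fix intuition I would first record the realizable case: if $\M'$ is realized by a genuine vector $w^*\in\espan(\Gale{A})$ then, dualizing, $w^*$ is a height vector on $V:=A\times\{1\}$, the lower faces of the lifted zonotope project onto $\mathcal T(\M')$, and we are simply looking at regular (here ``coherent'') zonotopal tilings; this case needs no new argument.

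The substance of the proof is to construct the inverse map. Given a zonotopal tiling $T$ of $Z(A)$, I would extract from it a sign function $\sigma$ on the cocircuits of $\M^*(A)$: each cocircuit of $\M^*(A)$ is a circuit of the oriented matroid of $A$, i.e.\ a minimal affine dependence of $A$, and $T$ prescribes a sign on it --- informally, on which side of the corresponding ``wall'' the new generator $w$ must sit. The key claim is that $\sigma$ is a \emph{localization}, that is, it satisfies the modular three-term compatibility conditions of Las Vergnas's single-element extension theorem. Granting this, $\sigma$ defines a one-element extension $\M'(T)$ of $\M^*(A)$, and a routine check of definitions gives $\mathcal T(\M'(T))=T$ and $\M'(\mathcal T(\M'))=\M'$, so the two maps are mutually inverse. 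I expect the verification that $\sigma$ is a localization to be the main obstacle: it is precisely the step where the global structure of $T$ --- that it tiles the ball $Z(A)$, is closed under faces, and is face-to-face --- must be distilled into the local modular conditions on $\sigma$, and it is the technically delicate core of every known proof of the Bohne--Dress theorem. I would follow the treatment in \cite{OMbook, triangbook}, which handles it by induction on $n$ using deletion and contraction of a generator to reduce the modular conditions to rank-two and rank-three instances that can be checked directly; the geometric repackaging of Richter-Gebert and Ziegler, in which $T$ is encoded as a piecewise-linear section $Z(A)\to[0,1]^n$ whose image is a ball realizing the extension, carries the same content.

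For the order statement, the bijection respects the cell structure of the tilings, so it takes the weak-map order on one-element extensions of $\M^*(A)$ to the refinement order on zonotopal tilings; together with bijectivity this is the claimed poset isomorphism. The corollary is then immediate: by definition a subdivision of $\level{A}{1}$ is lifting precisely when it is the height-$1$ slice of some zonotopal tiling of $Z(A)$; by the surjectivity just established, every zonotopal tiling is $\level{S}{Z}(A,w)$ for some one-element extension $w$; and the height-$1$ slice of $\level{S}{Z}(A,w)$ equals $\level{S}{1}(A,w)$, as observed immediately before the theorem. Hence the lifting subdivisions of $\level{A}{1}$ are exactly the subdivisions $\level{S}{1}(A,w)$ produced by \Cref{prop:lifting}(1).
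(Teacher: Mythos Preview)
The paper does not actually prove this theorem: it is stated as the classical Bohne--Dress Theorem and left without proof, serving only as background for the rest of the paper. There is therefore nothing in the paper to compare your proposal against.

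That said, your outline is the standard route to Bohne--Dress and is structurally sound. One point worth tightening: you say you would ``extract from $T$ a sign function $\sigma$ on the cocircuits of $\M^*(A)$'' but do not actually say how the tiling determines the sign on a given circuit of $A$; this is the place where the argument has real content and where a reader would want to see the definition spelled out (e.g.\ via which of the two tiles on either side of the corresponding wall is present in $T$, or equivalently via the section $Z(A)\to[0,1]^n$). You are right that verifying the localization axioms is the crux, and deferring it to \cite{OMbook,triangbook} is honest but means your write-up is a proof sketch rather than a proof. The derivation of the ``in particular'' clause from the bijection is correct and matches how the paper uses the result.
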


\section{Normal fans of hypersecondary polytopes}
\label{sec:fiber}

The goal of this section is to study hypersecondary polytopes, and the relations between them and the secondary zonotope.
Most of such relations say that the normal fan of one of the polytopes refines that of another one. We introduce the following definition to this effect:

\begin{definition}
Let $P, Q \in \R^d$ be two polytopes. We say that $Q$ is a \emph{Minkowski summand} of $P$, and write $Q \le P$, if any of the following equivalent conditions holds:
\begin{enumerate}
\item The normal fan of $P$ refines that of $Q$.
\item $P+Q$ is combinatorially isomorphic to $P$.

\end{enumerate}
If $P$ and $Q$ are Minkowski summands of one another then they are \emph{normally equivalent} and we write $P\cong Q$.
\end{definition}

\begin{remark}
The equivalence of these two conditions follows from the fact that the normal fan of $P+Q$ is the common refinement of the normal fans of $P$ and $Q$. It can be shown $Q\le P$ is also equivalent to the existence of a polytope $Q'$ and an $\varepsilon >0$ such that $P=Q'+\varepsilon Q$, hence the name ``Minkowski summand''.
\end{remark}

Throughout this section we will assume that $A\subseteq \R^d$ is a point configuration that spans affinely $\R^d$. As a first example, it follows from \Cref{prop:intersecting} that:

\begin{proposition}
For every configuration $A\subset \R^d$ of size $n$:
\label{prop:fiber_zonotope}
\begin{enumerate}
\item $\fiber^{(k)}(A) \le \fiber^Z(A)$.

\item Let $k_0=0 < k_1 < \dots < k_p=n$ be a sequence of integers with $k_{i+1}-k_i \le d+1$ for all $i$. Then,
\[
 \fiber^Z(A) \cong \sum_{i=0}^p \fiber^{(k_i)}(A).
\]
\end{enumerate}
\end{proposition}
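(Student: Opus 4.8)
The plan is to deduce both parts from the fact, recorded in \Cref{prop:intersecting}(2), that the fiber fan of the zonotope refines the fiber fan of each level, together with the combinatorial rule for coherent subdivisions.

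\textbf{Part (1).} This is immediate: \Cref{prop:intersecting}(2) says precisely that the normal fan of $\fiber^Z(A)$ refines the normal fan of $\level\fiber k(A)$, which by condition (1) of the definition of Minkowski summand is exactly $\fiber^{(k)}(A) \le \fiber^Z(A)$. I would only need to note that the relevant fans live in the same ambient space (both are fans in $(\R^n)^*$, modulo the common linearity space coming from $\ker\pi$ and constants on the cube), so the refinement statement makes sense.

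\textbf{Part (2).} Here I would use that Minkowski sum corresponds to common refinement of normal fans: $\sum_{i=0}^p \fiber^{(k_i)}(A)$ has normal fan equal to the common refinement of the fiber fans at levels $k_1,\dots,k_{p-1}$ (the terms $k_0=0$ and $k_p=n$ contribute nothing, being points/trivial). By Part (1) each of these refines into the fiber fan of $Z(A)$, so $\sum_i \fiber^{(k_i)}(A) \le \fiber^Z(A)$ automatically. The substance is the reverse inequality: the common refinement of the level-$k_i$ fiber fans is at least as fine as the fiber fan of $Z(A)$. Equivalently, I must show that if $w, w' \in (\R^n)^*$ induce the same coherent hypersimplicial subdivision at every level $k_1, \dots, k_{p-1}$, then they induce the same coherent zonotopal tiling of $Z(A)$. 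To prove this I would take a coherent zonotopal tiling $S(Z(A),w)$ and argue it is determined by its slices $\level{S(Z(A),w)}{k_i} = S(\level A{k_i}, w)$, $i=1,\dots,p-1$. A tile $[X,Y]$ of a zonotopal tiling of $Z(A)$ has $\dim(Y\setminus X) \le d+1$ faces living at all heights from $|X|$ to $|Y|$; since consecutive $k_i$ differ by at most $d+1$, the interval $[|X|,|Y|]$ — which has length $\dim(Y\setminus X)\le d+1$, wait, one must be careful: a tile need not have full dimension $d+1$. The point is rather that $[X,Y]$, as a face of the cube-tiling, contributes a cell $\level{[X,Y]}{k}$ at level $k$ for every $k$ with $|X|\le k\le |Y|$; and since the $k_i$ are at most $d+1$ apart while $|Y|-|X|$ can be as large as $n$, I would instead reconstruct the tiling cell-by-cell: a tile is recognizable from the collection of its nonempty slices, and if $|X|<|Y|$ then since $|Y|-|X|\le n$ and the $k_i$ partition $[0,n]$ into steps of size $\le d+1$... the cleanest route is to show directly that knowing $S(\level A{k}, w)$ for the listed values of $k$ recovers, for each pair $(X,Y)$ with $|Y\setminus X|\le d+1$ actually appearing, whether $[X,Y]$ is a tile — using that such a small tile has $|Y|-|X|\le d+1$ so its height-interval meets at least one $k_i$ or is pinned down by two consecutive ones. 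Combined with the fact (from the Bohne--Dress correspondence, or directly) that a coherent zonotopal tiling is generated by its tiles of the form $[X,Y]$ with $Y\setminus X$ an affine basis, i.e.\ $|Y\setminus X|=d+1$, and for those the height interval has length exactly $d+1$ and hence contains some $k_i$ or is sandwiched between $k_{i}$ and $k_{i+1}=k_i+d+1$, this pins down $w$ up to the fiber fan of $Z(A)$.

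\textbf{Main obstacle.} The genuinely delicate step is the reverse inclusion in Part (2): showing that the level-$k_i$ slices \emph{jointly} determine the coherent zonotopal tiling, i.e.\ that no information is lost by slicing only at the chosen heights. I expect the key lemma to be that a fine zonotopal tiling (equivalently, a single-element extension of $\M^*(A)$) is determined by the full-dimensional tiles $[X,Y]$ with $|Y\setminus X| = d+1$, each of which spans exactly $d+1$ consecutive height levels, so that the gap condition $k_{i+1}-k_i\le d+1$ guarantees every such tile is seen (as a positive-dimensional cell, i.e.\ with $|X| < k_i < |Y|$) at some level $k_i$; one then has to check that seeing the slice at that level, for all relevant tiles simultaneously, recovers the extension. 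For non-fine $w$ one passes to a generic perturbation or argues at the level of fans. I would want to be careful that "the slice at level $k_i$ sees the tile" really does mean $|X| < k_i < |Y|$ strictly, and handle the boundary cases where a tile's height interval has an endpoint exactly at some $k_i$; the inclusion $k_0 = 0$, $k_p = n$ in the hypothesis is presumably there precisely to make these boundary bookkeeping issues vanish.
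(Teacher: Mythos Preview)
Part (1) is fine and matches the paper, which in fact gives no argument for this proposition beyond the line ``it follows from \Cref{prop:intersecting}''.

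For Part (2) your strategy---show that the slices at the chosen levels jointly determine the coherent zonotopal tiling, hence that the common refinement of the level-$k_i$ fiber fans equals the fiber fan of $Z(A)$---is the right one and is what the paper intends. But your execution via full-dimensional fine tiles $[X,Y]$ with $|Y\setminus X|=d+1$ has a genuine gap at exactly the boundary case you flag: when $k_{j+1}-k_j=d+1$ and a tile has $|X|=k_j$, $|Y|=k_{j+1}$, no $k_i$ lies strictly between $|X|$ and $|Y|$, so the tile appears in the slices only as the $0$-cells $X$ and $Y$. Knowing the vertex sets at two consecutive levels does not obviously let you reconstruct which pairs $(X,Y)$ bound a tile, so this line of argument stalls. (The presence of $k_0=0$ and $k_p=n$ does not resolve this; it merely encodes that the gap condition also controls the two ends of $[0,n]$.)

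A clean fix is to shift attention from tiles to \emph{flips}. Since $\fiber^Z(A)$ is a zonotope, its normal fan is a hyperplane arrangement, so it suffices to show that every wall of that arrangement already separates two cones of the normal fan of some $\fiber^{(k_i)}(A)$. A wall corresponds to a cubical flip supported on a subzonotope $[X,X\cup C]$ with $C$ a circuit of $A$, so (for $A$ in general position) $|C|=d+2$. Now the height interval $[|X|,|X|+d+2]$ has length $d+2>d+1\ge k_{i+1}-k_i$, so some $k_j$ lies \emph{strictly} inside it; at that level the two fine zonotopal tilings on either side of the wall restrict to the two distinct hypertriangulations of $\level{[X,X\cup C]}{k_j}$, hence the slices differ. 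Passing from height $d+1$ (tiles) to height $d+2$ (flips) is exactly what turns the weak inequality into the strict one you need.
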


In particular:

\begin{corollary}
For every configuration $A\subset \R^d$ of size $n$,
\label{coro:fiber_zonotope}
\begin{enumerate}
\item If $n \le 2d+2$ then
\[
\fiber^Z(A) \cong \fiber^{(k)}(A) , \qquad \forall k\in[n-d-1,d+1].
\]
\item If $n\ge 2d+2$ then
\[
\sum_{k=d+1}^{n-d-1}  \fiber^{(k)}(A) \cong \fiber^Z(A).
\]
\end{enumerate}
\end{corollary}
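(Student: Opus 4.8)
The plan is to deduce \Cref{coro:fiber_zonotope} directly from \Cref{prop:fiber_zonotope} by choosing appropriate sequences of integers. Both parts of the corollary are instances of the two parts of the proposition, so the work is purely about picking the right partition $0=k_0<k_1<\dots<k_p=n$ and then invoking the already-established relations $\fiber^{(k)}(A)\le \fiber^Z(A)$ and $\fiber^Z(A)\cong\sum_i\fiber^{(k_i)}(A)$. Throughout I will use freely that $\cong$ is an equivalence relation on polytopes modulo normal equivalence, that $\le$ is a partial order compatible with Minkowski sums (if $Q_1\le P$ and $Q_2\le P$ then $Q_1+Q_2\le P$, since the common refinement of two coarsenings of the normal fan of $P$ is still refined by it), and that $Q\le P$ together with $P\le Q$ gives $P\cong Q$.

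For part (1), assume $n\le 2d+2$ and fix $k\in[n-d-1,d+1]$; note this interval is nonempty precisely because $n\le 2d+2$. Take the three-term sequence $0<k<n$. The gaps are $k-0=k\le d+1$ and $n-k\le n-(n-d-1)=d+1$, so the hypothesis of \Cref{prop:fiber_zonotope}(2) is met and we get $\fiber^Z(A)\cong\fiber^{(0)}(A)+\fiber^{(k)}(A)+\fiber^{(n)}(A)$. Here $\fiber^{(0)}(A)$ and $\fiber^{(n)}(A)$ are points (the hypersimplices $\level{\Delta_n}{0}$ and $\level{\Delta_n}{n}$ are single vertices, so the corresponding projections have zero-dimensional fiber polytopes), hence contribute trivially to the Minkowski sum, giving $\fiber^Z(A)\cong\fiber^{(k)}(A)$. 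I should make this degenerate-boundary remark explicit, since the definition of $\level{\baues}{k}$ was only spelled out for $k\in[n-1]$; the clean fix is to observe that $\fiber^{(0)}(A)$ and $\fiber^{(n)}(A)$ are points and can simply be dropped from any such sum, or alternatively to note that \Cref{prop:fiber_zonotope}(2) as stated already tolerates the endpoints $k_0=0$ and $k_p=n$.

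For part (2), assume $n\ge 2d+2$. By \Cref{prop:fiber_zonotope}(1) each $\fiber^{(k)}(A)$ with $d+1\le k\le n-d-1$ is a Minkowski summand of $\fiber^Z(A)$, so their sum is as well: $\sum_{k=d+1}^{n-d-1}\fiber^{(k)}(A)\le\fiber^Z(A)$. For the reverse relation I need a sequence $0=k_0<k_1<\dots<k_p=n$ with all gaps at most $d+1$ whose interior terms $k_1,\dots,k_{p-1}$ all lie in $[d+1,n-d-1]$; then \Cref{prop:fiber_zonotope}(2) yields $\fiber^Z(A)\cong\sum_{i=0}^p\fiber^{(k_i)}(A)=\sum_{i=1}^{p-1}\fiber^{(k_i)}(A)$ (again dropping the trivial endpoints), and since each summand on the right is one of the $\fiber^{(k)}(A)$ appearing in the corollary's sum, we get $\fiber^Z(A)\le\sum_{k=d+1}^{n-d-1}\fiber^{(k)}(A)$ (adding the missing summands only coarsens further, staying $\le\fiber^Z(A)$). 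Combining the two inequalities gives the claimed normal equivalence. Such a sequence exists: for instance start at $0$, jump to $d+1$, then add $1$ repeatedly up to $n-d-1$, then jump to $n$; the first and last gaps are $d+1$, the middle gaps are $1$, and because $n\ge 2d+2$ we have $d+1\le n-d-1$ so the middle stretch is well-defined.

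The main obstacle — really the only subtlety — is the bookkeeping at the boundary levels $k=0$ and $k=n$, which are not subdivision levels in the sense of the rest of the paper but appear in \Cref{prop:fiber_zonotope}(2). I would handle this once and for all with a one-line remark that $\fiber^{(0)}(A)$ and $\fiber^{(n)}(A)$ are single points and hence neutral for Minkowski summation, so they may be included or excluded from any normal-equivalence statement at will. Everything else is a formal manipulation of the relations $\le$ and $\cong$, for which the only facts needed are transitivity of $\le$, compatibility of $\le$ with $+$, and antisymmetry giving $\cong$; all of these are immediate from the definition via normal fans and the remark following it.
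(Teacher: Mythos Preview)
Your argument is correct and is precisely the intended derivation: the paper gives no separate proof and just writes ``In particular:'', so spelling out the choice of sequence $0<k<n$ for part~(1) and $0<d+1<d+2<\dots<n-d-1<n$ for part~(2), together with the triviality of $\fiber^{(0)}(A)$ and $\fiber^{(n)}(A)$, is exactly what is implicit. Note that with your specific sequence in part~(2) the interior terms already exhaust $[d+1,n-d-1]$, so \Cref{prop:fiber_zonotope}(2) gives the full $\cong$ directly and the separate appeal to part~(1) is not needed.
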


\begin{lemma}
\label{lemma:onepiece}
Let $S$ be coherent zonotopal subdivision of $A$ and let $B\subseteq A$ be a spanning subset. Then there is at most one $X\subseteq A\backslash B$, such that $[X,X\cup B] \in S$.
\end{lemma}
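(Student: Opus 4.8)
The plan is to use coherence. Since $S$ is a coherent zonotopal tiling, $S = S(Z(A),w)$ for some $w\in(\R^n)^*$, this being the $([0,1]^n \xrightarrow{\pi} Z(A))$ instance of the fiber construction. Unwinding the definition of $S(Q\xrightarrow{\pi}A,w)$ in this case, one obtains the usual description of a coherent zonotopal tiling by supporting functionals: for $X\subseteq Y\subseteq[n]$, the tile $[X,Y]$ (i.e.\ the corresponding face of $[0,1]^n$) belongs to $S$ if and only if there is a functional $c\in(\R^{d+1})^*$ for which the function $g_c\colon i\mapsto \langle c,(a_i,1)\rangle + w_i$ is negative on $X$, zero on $Y\setminus X$, and positive on $[n]\setminus Y$. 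Indeed, the extensions $\tilde w$ of $w|_{\ker\pi}$ are exactly the vectors $\tilde w = w + \pi^* c$, and $Q^{\tilde w}$ is then the face of $[0,1]^n$ whose $i$-th coordinate is $1$, free, or $0$ according as $g_c(i)$ is negative, zero, or positive. (A choice of $\min/\max$ convention may flip some signs; nothing below is affected.) Recording this description is the first step.

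For the second step, assume towards a contradiction that $[X,X\cup B]$ and $[X',X'\cup B]$ are both tiles of $S$, with $X\neq X'$, both contained in $A\setminus B$, and pick functionals $c,c'\in(\R^{d+1})^*$ as above. Both $g_c$ and $g_{c'}$ vanish on $B$, i.e.\ $\langle c,(a_i,1)\rangle = \langle c',(a_i,1)\rangle = -w_i$ for all $i\in B$. Since $B$ spans $\R^d$ affinely, the vectors $\{(a_i,1):i\in B\}$ span $\R^{d+1}$; a linear functional on $\R^{d+1}$ is determined by its values on a spanning set, so $c=c'$ and hence $g_c = g_{c'}$.

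The final step is immediate: $X = \{i\in A\setminus B: g_c(i)<0\}$ and $X' = \{i\in A\setminus B: g_{c'}(i)<0\}$ are then the same set, contradicting $X\neq X'$. Therefore at most one $X\subseteq A\setminus B$ has $[X,X\cup B]\in S$.

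I do not anticipate a genuine obstacle here; the only thing requiring care is translating the abstract definition of $S(Z(A),w)$ (via extensions of $w|_{\ker\pi}$) into the explicit functional description with consistent inequality conventions. An equivalent geometric phrasing, which could be included as a remark, uses the lifted zonotope $\widetilde Z := \sum_i\conv\{0,(a_i,1,w_i)\}\subseteq\R^{d+2}$: since $B$ spans, every tile $[X,X\cup B]$ is full-dimensional and lifts to a lower facet of $\widetilde Z$ that is a translate of $\sum_{i\in B}\conv\{0,(a_i,1,w_i)\}$; two such facets would be parallel, hence antipodal, facets of $\widetilde Z$, and at most one facet of an antipodal pair can lie on the lower envelope — a contradiction.
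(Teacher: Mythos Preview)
Your proof is correct and is essentially the same as the paper's: both pick $w$ with $S=S(Z(A),w)$, observe that a tile $[X,X\cup B]$ forces the extended functional $\tilde w = w + \pi^*c$ to vanish on $B$, and use that $B$ spanning makes such $\tilde w$ (equivalently, your $c$) unique, whence $X$ is determined. Your write-up is in fact more explicit about the translation from the abstract definition of $S(Z(A),w)$ to the sign pattern of $g_c$, and your closing geometric remark via the lifted zonotope is a nice alternative phrasing not present in the paper.
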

\begin{proof}
Let $w\in (\R^*)^n$ such that $S = S(Z(A),w)$. Since $B$ is of maximal dimension, there is at most one $\tilde w$ such that $\tilde w|_{\ker(\pi)} = w|_{\ker(\pi)}$ and $w\cdot b = 0$ for every $b\in B$. If such $\tilde w$ exists then the only tile of the form $[X, X\cup B]$ that is in $S$ is the one where $X = \{x\in A \mid \tilde w\cdot x < 0\}$. If no such $\tilde w$ exists then there is no tile of that form in the subdivision. 
\end{proof}

In the following result and in the rest of this section we denote by $A_J$ the subset of $A$ labelled by $J$, for any $J\subset [n]$.

\begin{lemma}
\label{lemma:type1}
Fix $k\ge 1$ and a lifting vector $w\in (\R^n)^*$, for a point configuration $A$ of size $n$.
For each tile $[X,Y] \subset  2^{[n]}$ 
such that $Y\backslash X$ a basis of $A$, 
the following are equivalent:
\begin{enumerate}

\item $\level{[X,Y]}{k+1}$ is a cell in $S^{(k+1)}(A,w)$.

\item There is an $x \in X$ such that 
$\level{[X\backslash x, Y\backslash x]}k$  is a cell in $S^{(k)}(A_{[n]\backslash x},w)$ 
but not in $S^{(k)}(A,w)$.

\item For every $x \in X$,
$\level{[X\backslash x, Y\backslash x]}k$  is a cell in $S^{(k)}(A_{[n]\backslash x},w)$ 
but not in $S^{(k)}(A,w)$.

\end{enumerate}
If, moreover, $k>1$, then they are also equivalent to:
\begin{enumerate}
\item[(4)] There are $x_1,x_2\in X$ such that 
$\level{[X\backslash x_i, Y\backslash x_i]}k$ is a cell in $S^{(k)}(A_{[n]\backslash x_i},w)$ for $i=1,2$.

\item[(5)] For every $x \in X$,
$\level{[X\backslash x, Y\backslash x]}k$  is a cell in $S^{(k)}(A_{[n]\backslash x},w)$.
\end{enumerate}
\end{lemma}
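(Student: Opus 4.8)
The plan is to analyze when a tile $\level{[X,Y]}{k+1}$ (with $Y\setminus X$ a basis) appears in the coherent zonotopal tiling $S^Z(A,w)$ and then pass to the hyperplane slices. Recall from \Cref{prop:lifting} and the discussion of the fiber fan that $[X,Y]$ is a tile of $S^Z(A,w)$ exactly when there is a lift $\tilde w$ with $\tilde w|_{\ker\pi}=w|_{\ker\pi}$, $\tilde w\cdot y=0$ for all $y\in Y\setminus X$, $\tilde w\cdot x<0$ for $x\in X$, and $\tilde w\cdot z>0$ for $z\notin Y$. Since $Y\setminus X$ is a basis, such a $\tilde w$ is unique up to nothing (it is determined), so "$[X,Y]$ is a tile of $S^Z(A,w)$" is equivalent to the sign conditions on the remaining coordinates of this canonical $\tilde w$. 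The key translation is: $\level{[X,Y]}{k+1}$ is a cell of $S^{(k+1)}(A,w)$ iff the tile $[X,Y]$ is a tile of $S^Z(A,w)$ and it covers level $k+1$, i.e. $|X|<k+1<|Y|$; but when $Y\setminus X$ is a basis $|Y\setminus X|=d+1$, so covering level $k+1$ is automatic once the dimension count is right. The real content is the equivalence with statements (2)--(5), which are about slices one level down of deletions.

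The main observation is a deletion-restriction relation for tiles. For a fixed $x\in X$, consider the tile $[X\setminus x, Y\setminus x]$ in the arrangement for $A_{[n]\setminus x}$: its canonical lift $\tilde w'$ is the restriction to $[n]\setminus x$ of the same $\tilde w$ (because removing the point $x$ removes one inequality but $Y\setminus X$ is still a basis of $A_{[n]\setminus x}$, the vanishing/sign conditions for the surviving points are unchanged). Thus $\level{[X\setminus x,Y\setminus x]}{k}$ is a cell of $S^{(k)}(A_{[n]\setminus x},w)$ iff $\tilde w\cdot z>0$ for all $z\notin Y$ (the same positivity condition as before, now without the index $x$), and $\tilde w\cdot x'<0$ for $x'\in X\setminus x$. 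Comparing with the condition for $\level{[X,Y]}{k+1}$ to be a cell of $S^{(k+1)}(A,w)$, the two differ precisely in the sign of $\tilde w\cdot x$: the big tile needs $\tilde w\cdot x<0$, whereas being a cell of $S^{(k)}(A_{[n]\setminus x},w)$ does not constrain $\tilde w\cdot x$ at all. Meanwhile $\level{[X\setminus x,Y\setminus x]}{k}$ is a cell of $S^{(k)}(A,w)$ (the full configuration, without deleting $x$) iff additionally $\tilde w\cdot x>0$. This is exactly why "cell in the deletion but not in $A$" pins down $\tilde w\cdot x<0$, giving (1)$\Leftrightarrow$(2)$\Leftrightarrow$(3); note (3)$\Rightarrow$(2) is trivial and (2)$\Rightarrow$(1) needs that one $x$ with $\tilde w\cdot x<0$, combined with $\level{[X\setminus x,Y\setminus x]}{k}$ being a cell of $S^{(k)}(A_{[n]\setminus x},w)$, already forces all the other sign conditions; then (1)$\Rightarrow$(3) follows since $X=\{x:\tilde w\cdot x<0\}$ once $[X,Y]$ is a tile.

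For the equivalence with (4) and (5) under the hypothesis $k>1$: the point is that when $|X|\ge 2$ we have two candidates $x_1,x_2$, and the dimension/level bookkeeping changes. If $\level{[X\setminus x_i,Y\setminus x_i]}{k}$ is a cell of $S^{(k)}(A_{[n]\setminus x_i},w)$ for $i=1,2$, then from the first computation we get $\tilde w\cdot x'<0$ for all $x'\in X\setminus x_1$ and all $x'\in X\setminus x_2$, whose union is all of $X$ as soon as $|X|\ge 2$; together with the positivity on $z\notin Y$ (which holds from either instance) this gives condition (1). Conversely (1) gives $\tilde w\cdot x<0$ for every $x\in X$ hence (5), and (5)$\Rightarrow$(4) is immediate. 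The hypothesis $k>1$ enters because one needs $|X|\ge 2$ to be available: $\level{[X,Y]}{k+1}$ having positive dimension with $Y\setminus X$ a basis forces $|X|\le k$ and $|Y|\ge k+2$, and when $k>1$ the relevant tiles indeed can (and in the situations of interest do) have $|X|\ge 2$; I should state the lemma with the understanding that (4),(5) are vacuous/automatically-in-line when $|X|<2$, or more cleanly track that $|X|\ge 1$ always and $|X|\ge 2$ is what (4) presupposes.

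The step I expect to be the main obstacle is the careful verification that the canonical lift $\tilde w$ "restricts" correctly under deletion of $x$ — i.e. that the unique functional certifying the small tile in $A_{[n]\setminus x}$ is genuinely the restriction of the one certifying the big tile, with no spurious freedom appearing when we drop a point. This is where the basis hypothesis on $Y\setminus X$ is doing all the work (it guarantees uniqueness of $\tilde w$ both before and after deletion), and one must be slightly careful about degenerate sign conditions ($\tilde w\cdot x=0$), which the genericity implicit in "$[X,Y]$ is a tile / $\level{[X,Y]}{k+1}$ is a cell" rules out. Everything else is sign-chasing on a single linear functional.
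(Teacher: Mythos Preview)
Your approach is essentially the same as the paper's: both arguments rest on the uniqueness of the functional $\tilde w$ agreeing with $w$ on $\ker\pi$ and vanishing on the basis $Y\setminus X$, after which each of (1)--(5) becomes a sign condition on the values $\tilde w\cdot a_i$ for $i\notin Y\setminus X$. The paper packages this uniqueness as \Cref{lemma:onepiece} and routes through \Cref{prop:intersecting} to pass between zonotopal tiles and their hypersimplicial slices, whereas you unpack the sign-chasing on $\tilde w$ directly; the content is identical.
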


\begin{proof}
The implication (3)$\Rightarrow$(2) is obvious. 

To show that (2)$\Rightarrow$(1), consider an $x$ such that the cell $\level{[X\backslash x, Y\backslash x]}k$ is a cell in $S((A_{[n]\backslash x})^{(k)},w)$. 
Then by \Cref{prop:intersecting}, $[X\backslash x, Y\backslash x]$ is a cell of $S(Z(A_{[n]\backslash x}),w)$.
Therefore either $[X\backslash x, Y\backslash x]\in S(Z(A),w)$ or $[X,Y]\in S(Z(A),w)$ but not both by \Cref{lemma:onepiece}.
In other words, either $\level{[X\backslash x, Y\backslash x]}k\in S(A^{(k)},w)$ or $\level{[X,Y]}{k+1}\in S(A^{(k+1)},w)$ but not both. 
Since we assumed $\level{[X\backslash x, Y\backslash x]}k\not\in S(A^{(k)},w)$, we are done.

To see that (1)$\Rightarrow$(3), notice that if $\level{[X,Y]}{k+1}\in S(A^{(k+1)},w)$ then $[X,Y]\in S(Z(A),w)$. So for all $x \in X$ we have that the tile $[X\backslash x, Y\backslash x]$ is a cell of $S(Z(A_{[n]\backslash x}),w)$ and in particular $\level{[X\backslash x, Y\backslash x]}{k}\in S((A_{[n]\backslash h})^{(k)},w)$. But as $[X,Y]\in S(Z(A),w)$ then by \Cref{lemma:onepiece} $[X\backslash x, Y\backslash x]$ can not be a cell of $S(Z(A),w)$,
so $\level{[X\backslash x, Y\backslash x]}k$ can not be a cell of $S(A^{(k)},w)$.

Now assume that $k>1$. It is clear that (3)$\Rightarrow$(5)$\Rightarrow$(4). To see that (4)$\Rightarrow$(2) notice that it if $\level{[X\backslash x_i, Y\backslash x_i]}k\in S(A^{(k)},w)$ holds for $i =1,2$, then the two zonotopes $[X\backslash x_1, Y\backslash x_1]$ and $[X\backslash x_2, Y\backslash x_2]$ are in $S(Z(A),w)$, which can not happen by \Cref{lemma:onepiece}.
\end{proof}

\begin{proposition}
\label{prop:summand}
For every configuration $A$ of size $n$ and every $k\in[n-1]$ we have that
$\fiber^{(k+1)}(A)$ is a Minkowski summand of
\[
\fiber^{(k)}(A) +  \sum_{i \in[n]} \fiber^{(k)}(A_{[n]\backslash i}).
\]
\end{proposition}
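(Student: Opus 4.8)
The plan is to use the criterion that $Q \le P$ (Minkowski summand) is equivalent to the normal fan of $P$ refining that of $Q$, combined with the fact that the normal fan of a Minkowski sum is the common refinement of the normal fans of the summands. So it suffices to show: whenever two lifting vectors $w_1, w_2 \in (\R^n)^*$ induce the same coherent subdivision in $S^{(k)}(A,w)$ \emph{and} the same coherent subdivision $S^{(k)}(A_{[n]\setminus i}, w)$ for every $i \in [n]$, then they induce the same coherent subdivision $S^{(k+1)}(A,w)$. Since $\fiber^{(k+1)}(A)$ only has cells of the form $\level{[X,Y]}{k+1}$, and the face poset of the fiber polytope is the poset of coherent subdivisions, this identity of coherent subdivisions is exactly the statement that the relevant normal fan refines that of $\fiber^{(k+1)}(A)$.

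The key step is to reduce the question of which cells appear in $S^{(k+1)}(A,w)$ to data about lower-level subdivisions, and this is precisely what \Cref{lemma:type1} provides. First I would observe that every cell $Z$ of a hypersimplicial subdivision $S^{(k+1)}(A,w)$ is of the form $\level{[X,Y]}{k+1}$ where $Y \setminus X$ is an independent set (not necessarily a basis) of $A$; by taking a face of the fiber polytope / passing to the subconfiguration $A_Y$ I can reduce to the case where $Y\setminus X$ spans, i.e. is a basis — more carefully, $\level{[X,Y]}{k+1}$ is a cell of $S^{(k+1)}(A,w)$ if and only if the corresponding full-dimensional-in-its-span tile is a cell of the coherent subdivision of the relevant subconfiguration, and the hypotheses on $w_1,w_2$ restrict to the analogous hypotheses on every subconfiguration. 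Then for such a basis tile, \Cref{lemma:type1} characterizes membership of $\level{[X,Y]}{k+1}$ in $S^{(k+1)}(A,w)$ purely in terms of (a) membership of $\level{[X\setminus x, Y\setminus x]}{k}$ in $S^{(k)}(A_{[n]\setminus x},w)$ and (b) non-membership of the same cell in $S^{(k)}(A,w)$, for any/every $x \in X$. Both of these conditions are determined by the coherent subdivisions $S^{(k)}(A,\cdot)$ and $S^{(k)}(A_{[n]\setminus x},\cdot)$, which $w_1$ and $w_2$ share by hypothesis; hence the $(k+1)$-level cells agree. One subtlety: when $X = \emptyset$ (so there is no $x \in X$ to choose), the characterization in \Cref{lemma:type1} degenerates, and I would handle this boundary case separately — a cell $\level{[\emptyset,Y]}{k+1}$ with $Y$ a basis is in $S^{(k+1)}(A,w)$ iff the tile $[\emptyset,Y]$ is in the coherent zonotopal tiling $S(Z(A),w)$, which by \Cref{lemma:onepiece} and the slicing relations is again detectable from level-$k$ data, or alternatively one notes this can only occur when $k+1 \le d+1$ is small and argue directly.

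The main obstacle I anticipate is the careful bookkeeping in the reduction from arbitrary cells $\level{[X,Y]}{k+1}$ (with $Y\setminus X$ merely independent) to the case where $Y\setminus X$ is a basis, and making sure the hypotheses transfer cleanly to the subconfigurations involved: one must check that if $w_1,w_2$ induce the same $S^{(k)}(A_{[n]\setminus i},w)$ for all $i$, then for any $J\subseteq [n]$ they also induce the same $S^{(k)}((A_J)_{J\setminus i}, w)$ for all $i \in J$ and the same $S^{(k)}(A_J, w)$ — the first follows since $(A_J)_{J\setminus i} = A_{J\setminus i}$ is a subconfiguration and a coherent subdivision of a subconfiguration is a face of (hence determined by) the coherent subdivision of the larger one; the second is just the fact that $S^{(k)}(A_J,w)$ is a face of $S^{(k)}(A,w)$. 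Once this is set up, the proof is essentially an application of \Cref{lemma:type1} cell by cell.
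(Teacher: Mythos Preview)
Your overall strategy is right and matches the paper: translate the Minkowski-summand statement into ``the coherent subdivisions $S^{(k)}(A,w)$ and $S^{(k)}(A_{[n]\setminus i},w)$ together determine $S^{(k+1)}(A,w)$,'' then appeal to \Cref{lemma:type1}. But two parts of your reduction are flawed, and the paper avoids them with a cleaner case split.

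\textbf{The reduction step is wrong as written.} The claim that every cell $\level{[X,Y]}{k+1}$ of a hypersimplicial subdivision has $Y\setminus X$ independent is false (the trivial subdivision has the single cell $\level{[\emptyset,[n]]}{k+1}$). More seriously, your fallback---``pass to the subconfiguration $A_Y$'' and use that ``$S^{(k)}(A_J,w)$ is a face of $S^{(k)}(A,w)$''---does not work: the coherent subdivision of a subconfiguration is \emph{not} in general determined by the coherent subdivision of the whole. For instance, if $A$ is a convex pentagon and $w$ gives the fan triangulation from vertex $5$, this says nothing about which diagonal $w$ selects in the quadrilateral $A_{\{1,2,3,4\}}$. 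The correct and much simpler fix is to observe that for comparing normal fans it suffices to take $w$ generic, and then every maximal cell of $S^{(k+1)}(A,w)$ automatically has $Y\setminus X$ a basis; no passage to subconfigurations is needed.

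\textbf{The paper's dichotomy makes the $X=\emptyset$ worry disappear.} Once you have basis tiles, split maximal cells $\level{[X,Y]}{k+1}$ according to whether $|X|<k$ or $|X|=k$. If $|X|<k$ then the zonotopal tile $[X,Y]$ covers both levels $k$ and $k+1$, so $\level{[X,Y]}{k+1}\in S^{(k+1)}(A,w)$ if and only if $\level{[X,Y]}{k}\in S^{(k)}(A,w)$; this is read off directly from $S^{(k)}(A,w)$. If $|X|=k$ then $X\neq\emptyset$ (since $k\ge 1$) and \Cref{lemma:type1} applies verbatim. This is exactly the paper's two-sentence proof, and it sidesteps both the subconfiguration issue and the $X=\emptyset$ boundary case you were worried about.
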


\begin{proof}
Saying that $\fiber^{(k+1)}(A)$ is a Minkowski summand of $\fiber^{(k)}(A) +  \sum_{i \in[n]} \fiber^{(k)}(A_{[n]\backslash i})$ is equivalent to saying that if, for a given $w$ we know the subdivisions that $w$ induces in $\level Ak$ and in $\level{A\backslash x}{k}$ for every $x$ then we also know the subdivision induced in $\level A{k+1}$. 
For a cell $\level{[X,Y]}{k+1}$ with $|X| =k$, \Cref{lemma:type1} says that its presence in $S(\level A{k+1},w)$ is determined by its presence in $S(\level Ak,w)$ and $S(\level {A\backslash x}k,w)$. Cells $\level{[X,Y]}{k+1}$ with $|X|<k$ are in $S(\level A{k+1},w)$ if and only if $\level{[X,Y]}k\in S(\level Ak,w)$.
\end{proof}

The converse is only true for small $k$:

\begin{proposition}
\label{prop:equiv1}
For every configuration $A\subseteq \R^d$ of size $n$ and every $k\in[d]$ we have that
\[
\fiber^{(k+1)}(A)
\cong
\fiber^{(k)}(A) +  \sum_{i \in[n]} \fiber^{(k)}(A_{[n]\backslash i}).
\]
\end{proposition}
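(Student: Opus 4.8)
The plan is to pair \Cref{prop:summand} with a converse reconstruction statement. By \Cref{prop:summand} we already have $\fiber^{(k+1)}(A)\le\fiber^{(k)}(A)+\sum_{i\in[n]}\fiber^{(k)}(A_{[n]\setminus i})$, so it suffices to prove the reverse Minkowski‑summand relation, i.e.\ that the normal fan of $\fiber^{(k+1)}(A)$ refines the normal fan of each summand on the right. Since these normal fans are the fiber fans, this amounts to showing: \emph{for $k\le d$ and every $w\in(\R^n)^*$, the subdivision $S^{(k+1)}(A,w)$ determines both $S^{(k)}(A,w)$ and every $S^{(k)}(A_{[n]\setminus i},w)$.} I would use that a subdivision is determined by its full‑dimensional cells, and that by \Cref{prop:intersecting} the full‑dimensional cells of $S^{(\ell)}(A,w)$ are exactly the $[X,Y]^{(\ell)}$ with $[X,Y]\in S(Z(A),w)$, $|X|<\ell<|Y|$, and $A_{Y\setminus X}$ affinely spanning $\R^d$; moreover, for any $[X,Y]$ with $|X|<\ell<|Y|$, membership $[X,Y]\in S(Z(A),w)$ is equivalent to $[X,Y]^{(\ell)}$ being a cell of $S^{(\ell)}(A,w)$, and the cell recovers $X$ and $Y$ as its intersection and union.

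The engine is the size estimate coming from $k\le d$: a full‑dimensional cell $[X,Y]^{(k)}$ of $S^{(k)}(A,w)$ has $|Y|=|X|+|Y\setminus X|\ge 0+(d+1)\ge k+1$, with equality only if $k=d$ and $X=\emptyset$. So \emph{except} for cells of the exceptional type $[\emptyset,Y]^{(d)}$ with $|Y|=d+1$ (which occur only when $k=d$), every full‑dimensional cell $[X,Y]^{(k)}$ of $S^{(k)}(A,w)$ has $|Y|\ge k+2$, hence also $|X|<k+1<|Y|$, so $[X,Y]^{(k+1)}$ is a full‑dimensional cell of $S^{(k+1)}(A,w)$; conversely each full‑dimensional cell $[X,Y]^{(k+1)}$ of $S^{(k+1)}(A,w)$ with $|X|<k$ yields, via $[X,Y]\in S(Z(A),w)$, a full‑dimensional cell $[X,Y]^{(k)}$ of $S^{(k)}(A,w)$. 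Hence $S^{(k+1)}(A,w)$ determines $S^{(k)}(A,w)$ outright when $k\le d-1$, and up to the exceptional cells when $k=d$. The same argument recovers each $S^{(k)}(A_{[n]\setminus i},w)$: for a tile $[X_0\cup\{i\},Y_0\cup\{i\}]$ with $i\notin Y_0$ and $Y_0\setminus X_0$ a basis, \Cref{lemma:type1} ((1)$\Leftrightarrow$(3)) ties membership of $[X_0\cup\{i\},Y_0\cup\{i\}]^{(k+1)}$ in $S^{(k+1)}(A,w)$ to membership of $[X_0,Y_0]^{(k)}$ in $S^{(k)}(A_{[n]\setminus i},w)$ and in $S^{(k)}(A,w)$, and the size estimate again makes the tiles needed to invert this visible at level $k+1$; this pins down $S^{(k)}(A_{[n]\setminus i},w)$ up to its own exceptional cells (of the form $[\emptyset,Y_0]^{(k)}$ with $A_{Y_0}$ a basis of $A_{[n]\setminus i}$).

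What is then left is to detect the exceptional cells from $S^{(k+1)}(A,w)$ — equivalently, to decide whether a given parallelepiped tile $[\emptyset,Y]$ lies in $S(Z(A),w)$. Here I would invoke \Cref{lemma:onepiece}: since $A_Y$ is spanning there is exactly one $X^*\subseteq[n]\setminus Y$ with $[X^*,X^*\cup Y]\in S(Z(A),w)$, so $[\emptyset,Y]\in S(Z(A),w)$ iff $X^*=\emptyset$, i.e.\ iff no translate $[X^*,X^*\cup Y]$ with $X^*\neq\emptyset$ lies in $S(Z(A),w)$. When $1\le|X^*|\le d$ such a translate crosses level $d+1$, so $[X^*,X^*\cup Y]^{(d+1)}$ is a full‑dimensional cell of $S^{(d+1)}(A,w)$ and is detected; this already settles everything when $n\le 2d+1$ (there $|X^*|\le|[n]\setminus Y|=n-d-1\le d$ automatically), and in fact for $n\le 2d+2$ the whole statement follows at once from \Cref{coro:fiber_zonotope}, where $\fiber^{(d)}(A)\cong\fiber^{(d+1)}(A)\cong\fiber^Z(A)$ absorbs all extra summands.

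The main obstacle is the regime $n\ge 2d+2$ with $k=d$: when the unique translate $X^*$ is too large to meet level $d+1$, one must still certify the \emph{absence} of the corresponding corner cell from the level‑$(d+1)$ data alone. To handle this I would analyse the local structure of $S^{(d+1)}(A,w)$ around the point $a_Y=\sum_{i\in Y}a_i$ — the tiling there records which tile sits across each facet $[\{j\},Y]$, $j\in Y$, of the hypothetical parallelepiped $[\emptyset,Y]$, and the presence of $[\emptyset,Y]$ forces every such neighbour to contain $j$ in its lower set — or else reduce to the case $n\le 2d+2$ by deleting points and transferring the statement to contractions via the Bohne–Dress correspondence (\Cref{thm:BohneDress}). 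Once the exceptional cells are accounted for, $S^{(k+1)}(A,w)$ determines $S^{(k)}(A,w)$ and every $S^{(k)}(A_{[n]\setminus i},w)$, which gives the required refinement of normal fans and hence the normal equivalence.
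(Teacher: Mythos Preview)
Your approach is the paper's: one direction is \Cref{prop:summand}, and for the other you argue that $S^{(k+1)}(A,w)$ determines $S^{(k)}(A,w)$ and each $S^{(k)}(A_{[n]\setminus i},w)$ via the size bound $|Y|\ge d+1>k$ together with \Cref{lemma:type1}. The paper's own proof does exactly this in three sentences and does not isolate the boundary case $k=d$, $X=\emptyset$, $|Y|=d+1$ that occupies most of your write-up.

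That boundary case is a real subtlety you have correctly identified: when $|Y|=d+1$ the ``cell'' $[\emptyset,Y]^{(d+1)}$ collapses to the vertex $\{Y\}$, and the mere presence of $Y$ as a vertex of $S^{(d+1)}(A,w)$ is \emph{not} by itself equivalent to $[\emptyset,Y]\in S(Z(A),w)$ (for instance with $d=1$, $A=(0,1,2,3)$ and $w=(4,0,0,-1)$ the set $\{2,3\}$ is a $0$-cell of the zonotopal tiling while the unique parallelepiped with that direction is $[\{4\},\{2,3,4\}]$). So the paper's terse sentence ``so $\level{[X,Y]}{k+1}$ is also a cell in $S(A^{(k+1)},w)$; this implies that $S(A^{(k+1)},w)$ determines $S(\level Ak,w)$'' leaves something implicit here as well. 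Your partial fixes --- detect the translate $[X^*,X^*\cup Y]$ at level $d+1$ when $1\le|X^*|\le d$, and invoke \Cref{coro:fiber_zonotope} for $n\le 2d+2$ --- are sound as far as they go, but your final paragraph (the case $k=d$, $n>2d+2$) is only a sketch: neither the ``local structure around $a_Y$'' analysis nor the deletion/Bohne--Dress reduction is actually carried out, and it is not clear how to turn either into a proof without essentially reproving the statement. In short: same strategy as the paper, more scrupulous about a point the paper glosses over, but that scruple is not fully resolved.
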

\begin{proof}
One direction is \Cref{prop:summand}. For the other direction we have that by \Cref{lemma:type1} then $S(A^{(k+1)},w)$ determines $S(A_{[n]\backslash i}^{(k+1)},w)$ for all $i\in[n]$. Any maximal cell in $\level{[X,Y]}k\in S(A^{(k)},w)$ must satisfy $|Y\backslash X|\ge d+1$, in particular $|Y|\ge d+1> k$, so $\level{[X,Y]}{k+1}$ is also a cell in $S(A^{(k+1)},w)$. This implies that $S(A^{(k+1)},w)$ determines $S(\level{A}{k},w)$.
\end{proof}

\begin{proposition}
For every configuration $A\subseteq \R^d$ of size $n>d+2$ and every $k\in[d]$ we have that
\[
\fiber^{(k)}(A) \le \sum_{i=1}^n \fiber^{(k)}(A_{[n]\backslash i})
\]
\end{proposition}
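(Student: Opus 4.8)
The plan is to read the relation $\fiber^{(k)}(A)\le\sum_{i=1}^n\fiber^{(k)}(A_{[n]\setminus i})$ as the assertion that, for every $w$, the coherent subdivision $S^{(k)}(A,w)$ is determined by the tuple $\bigl(S^{(k)}(A_{[n]\setminus i},w)\bigr)_{i\in[n]}$, and then to write that determination down explicitly. Since a polyhedral subdivision is recovered from its full-dimensional cells, it is enough to decide, for every interval $[X,Y]$ with $A_{Y\setminus X}$ affinely spanning $\R^d$ (the cells that can be $d$-dimensional), whether $\level{[X,Y]}{k}\in S^{(k)}(A,w)$ from that tuple alone.

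The starting point is the combinatorial criterion that governs everything. Unwinding the definition of $\pi$-coherence, and using that for the projection $\pi\colon e_i\mapsto(a_i,1)$ the orthogonal complement of $\ker\pi$ is exactly the space of vectors $\bigl(f(a_1),\dots,f(a_n)\bigr)$ with $f\colon\R^d\to\R$ affine, one gets: $\level{[X,Y]}{k}\in S^{(k)}(A,w)$ if and only if there is an affine $f$ with
\[
w_i+f(a_i)<0\ \ (i\in X),\qquad w_i+f(a_i)=0\ \ (i\in Y\setminus X),\qquad w_i+f(a_i)>0\ \ (i\in[n]\setminus Y),
\]
and the same statement for $S^{(k)}(A_{[n]\setminus x},w)$ with the index $x$ removed from all three clauses (a variant of \Cref{prop:lifting}). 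The ``only if'' half of the reconstruction is then immediate: a witness $f$ for $\level{[X,Y]}{k}$ in $A$ is also a witness for $\level{[X\setminus x,Y\setminus x]}{k}$ in $A_{[n]\setminus x}$, for every $x$ --- essentially \Cref{lemma:type1} read with one fewer point.

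For the converse I would split on the quantity $m:=|X|+|[n]\setminus Y|=n-|Y\setminus X|$. When $m\ge 2$: each witness $f_x$ coming from $\level{[X\setminus x,Y\setminus x]}{k}\in S^{(k)}(A_{[n]\setminus x},w)$ satisfies $f_x(a_i)=-w_i$ for all $i\in Y\setminus X$, and since $A_{Y\setminus X}$ spans $\R^d$ this pins all the $f_x$ down to one common affine $f$. Running $x$ over $X$ then recovers the strict inequalities on $X$ (when $|X|\ge 2$, using $x\ne i$ for the constraint at $i$) and running $x$ over $[n]\setminus Y$ recovers those on $[n]\setminus Y$ (when $|[n]\setminus Y|\ge 2$); if $|X|=1$ then $m\ge 2$ forces $[n]\setminus Y\ne\emptyset$ and the single constraint on $X$ is read off any witness $f_x$ with $x\in[n]\setminus Y$ (which carries ``$<0$ on $X$'' outright), and symmetrically if $|[n]\setminus Y|=1$. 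Thus $f$ witnesses $\level{[X,Y]}{k}\in S^{(k)}(A,w)$, and in this range the cell lies in $S^{(k)}(A,w)$ if and only if all the corresponding cells lie in the $S^{(k)}(A_{[n]\setminus x},w)$.

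The remaining cases are $m\le 1$. If $m=0$ then $X=\emptyset$, $Y=[n]$: this is the trivial subdivision, excluded from $\level{\baues}{k}(A)$, and the $m=0$ instance of the reconstruction --- that $(w_i)$ agreeing on each $A_{[n]\setminus i}$ with an affine function forces it to do so on all of $A$ --- also accounts for the lineality spaces of the two fans. The genuinely delicate case, which I expect to be the main obstacle, is $m=1$: either $X=\emptyset$ with $|[n]\setminus Y|=1$, or $|X|=1$ with $Y=[n]$. Here deleting the distinguished index turns the corresponding smaller subdivision into the trivial one --- a condition that is itself visible in the tuple and that pins down the affine interpolant $f$ of $(w_i)$ on $A_{Y\setminus X}$; any other deletion whose twice-deleted configuration still affinely spans $\R^d$ then supplies the single remaining sign constraint on $f$. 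Such a deletion always exists because $|A_{Y\setminus X}|=n-1\ge d+2$, which is precisely where $n>d+2$ is used; so the real work is to arrange this boundary bookkeeping carefully, including the degenerate subconfigurations that force one to descend to a further deletion.
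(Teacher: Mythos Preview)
Your approach is essentially the paper's: translate ``Minkowski summand'' into ``the coherent subdivision $S^{(k)}(A,w)$ is determined by the tuple $(S^{(k)}(A_{[n]\setminus i},w))_i$'' and verify this cell by cell via the unique affine interpolant of $-w$ on $Y\setminus X$. The paper uses the same criterion (phrased as the unique $\tilde w$ agreeing with $w$ on $\ker\pi$ and vanishing on $Y\setminus X$) and the same reconstruction idea.

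The one substantive difference is that the paper first reduces to \emph{generic} $w$. For a Minkowski-summand statement this is legitimate: one complete fan refines another if and only if every maximal cone of the first lies in a maximal cone of the second, so it suffices to treat $w$ in the interior of a maximal cone of the common refinement. For such $w$ all the subdivisions involved are fine, so every full-dimensional cell has $Y\setminus X$ an affine \emph{basis}, and hence
\[
m=|X|+|[n]\setminus Y|=n-(d+1)\ge 2
\]
by the hypothesis $n>d+2$. Thus only your ``easy'' case $m\ge 2$ ever arises; the cases $m\le 1$ that you flag as ``the main obstacle'' simply do not occur. (The paper also uses $k\le d$ to guarantee that the deleted cell $\level{[X\setminus i,Y\setminus i]}{k}$ is still full-dimensional, which your argument needs as well.) This is not a different proof, just a shorter path through the same argument; after the reduction, your $m\ge 2$ paragraph is essentially the entire proof.

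Your outline for $m\le 1$ is on the right track (the twice-deleted configuration still spans because $n-2\ge d+1$), but it is unnecessary work once the generic reduction is made, and the ``further deletion'' you worry about never has to happen.
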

\begin{proof}
We need to prove that for every $w\in \R^d$, knowing $S(\level{A_{[n]\backslash i}}k ,w)$ for every $i$ determines $S(\level Ak,w)$. It is enough to prove it for a generic $w$, so we can assume the subdivisions are fine. Let $[X,Y]$ be a tile such that $Y\backslash X$ is an affine basis. We claim that $\level{[X,Y]}k\in S(\level{A_{[n]\backslash i}}k,w)$ if and only if $\level{[X\backslash i,Y\backslash i]}k\in S(\level{A_{[n]\backslash i}}k,w)$ for every $i\in [n]\backslash(Y\backslash X)$.

There is exactly one $\tilde w$ that agrees with $w$ in $\ker(\pi)$ and such that $\tilde w \cdot x = 0$ for every $x\in Y\backslash X$. 
We have that $\level{[X,Y]}k\in S(\level{A_{[n]\backslash i}}k,w)$ if and only if $\tilde w \cdot x<0$ for every $x \in X$ and $\tilde w \cdot x>0$ for every $x \in [n]\backslash Y$.
Notice that as $n> d+2$, $|[n]\backslash(Y\backslash X)|> 2$. Let $i \in [n]\backslash(Y\backslash X)$. 
As $k\le d$ and $|Y\backslash X|= d+1$, then for $Y\backslash i> k$ so $\level{[X\backslash i,Y\backslash i]}k$ is a full dimensional cell in the level $k$. 
So it is in $S(\level{A_{[n]\backslash i}}k,w)$ if and only if $\tilde w\cdot x< 0$ for every $x\in X\backslash i$ for all $x \in X\backslash i$ and $\tilde w \cdot x>0$ for every $x \in [n]\backslash (Y\cup i)$. As $|[n]\backslash(Y\backslash X)|> 2$, we can do this for two different elements in $[n]\backslash(Y\backslash X)$ so we can verify the sign of $\tilde w \cdot i$ for every $i\in [n]\backslash(Y\backslash X)$.
\end{proof}

A consequence of this is that \Cref{prop:equiv1} can be strengthened as follows:

\begin{proposition}
\label{prop:equiv2}
For every configuration $A\subseteq \R^d$ of size $n>d+2$ and every $k\in[d]$ we have that
\[
\fiber^{(k+1)}(A)
\cong
\sum_{i \in[n]} \fiber^{(k)}(A_{[n]\backslash i}).
\]
\end{proposition}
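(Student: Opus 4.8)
The plan is to combine the two previous results. Proposition~\ref{prop:equiv1} already gives
\[
\fiber^{(k+1)}(A)
\cong
\fiber^{(k)}(A) +  \sum_{i \in[n]} \fiber^{(k)}(A_{[n]\backslash i}),
\]
so it suffices to show that, under the hypothesis $n>d+2$, the first summand $\fiber^{(k)}(A)$ is redundant; that is, that it is already a Minkowski summand of the sum of the others. But this is exactly the content of the proposition immediately preceding, which states $\fiber^{(k)}(A)\le\sum_{i=1}^n\fiber^{(k)}(A_{[n]\backslash i})$. So the whole argument is: the right-hand side of Proposition~\ref{prop:equiv1} has $\fiber^{(k)}(A)$ as one of its summands, but $\fiber^{(k)}(A)$ is itself dominated (normally) by the sum $\sum_i\fiber^{(k)}(A_{[n]\backslash i})$, so adding it changes neither the normal fan nor the combinatorial type of the Minkowski sum, and hence
\[
\fiber^{(k)}(A) + \sum_{i\in[n]}\fiber^{(k)}(A_{[n]\backslash i})
\cong
\sum_{i\in[n]}\fiber^{(k)}(A_{[n]\backslash i}).
\]

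To make the middle step precise I would invoke the basic fact recorded in the \textbf{Remark} after the definition of Minkowski summand: the normal fan of a Minkowski sum $P+Q$ is the common refinement of the normal fans of $P$ and $Q$. Thus if $Q\le P$, meaning the normal fan of $P$ refines that of $Q$, then the common refinement of the two fans is just the normal fan of $P$, so $P+Q\cong P$. Applying this with $P=\sum_{i\in[n]}\fiber^{(k)}(A_{[n]\backslash i})$ and $Q=\fiber^{(k)}(A)$, and using the preceding proposition to guarantee $Q\le P$, gives the normal equivalence above. Chaining this with Proposition~\ref{prop:equiv1} yields the claim.

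I do not anticipate a genuine obstacle here, since both ingredients are already in place; the only thing to be careful about is the hypothesis bookkeeping. Proposition~\ref{prop:equiv1} needs $k\in[d]$ but no lower bound on $n$ beyond what is implicit in $A$ spanning $\R^d$ (so $n\ge d+1$), while the domination result $\fiber^{(k)}(A)\le\sum_i\fiber^{(k)}(A_{[n]\backslash i})$ needs $n>d+2$ and $k\in[d]$; intersecting the hypotheses gives precisely the stated range $n>d+2$, $k\in[d]$, so both results are available simultaneously and the proof is a one-line combination. If desired one could also phrase it symmetrically for $k$ near $n$ via the automorphism $B\mapsto\sum_{i\in B}e_i\mapsto\sum_{i\notin B}e_i$ identifying $\fiber^{(k)}(A)$ with $\fiber^{(n-k)}(A)$, but that is not needed for the statement as given.
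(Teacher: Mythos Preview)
Your proof is correct and is exactly the argument the paper intends: the statement is presented without explicit proof precisely because it follows immediately by combining Proposition~\ref{prop:equiv1} with the preceding proposition $\fiber^{(k)}(A)\le\sum_i\fiber^{(k)}(A_{[n]\backslash i})$, using that $Q\le P$ implies $P+Q\cong P$. Your hypothesis bookkeeping is also correct.
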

Notice that if $n = d+1$ then the fiber polytopes are just points and if $n=d+2$ they are just segments and in particular $\fiber^{(k+1)}(A) \cong \fiber^{(k)}(A)$.
Now we are ready to prove the main result of this section:
\begin{theorem}
\label{thm:fiber}
Let $A\subseteq \R^d$ be a configuration of size $n$ and $k\in [d+1]$. Let $s = \max (n-k+1, d+2)$. Then
\[
\fiber^{(k)}(A) \cong 
\sum\limits_{J\in {[n]\choose s}} \fiber(A_J)
\]
\end{theorem}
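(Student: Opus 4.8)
The plan is to prove \Cref{thm:fiber} by induction on $k$, with \Cref{prop:equiv2} providing the inductive step and a careful bookkeeping of which subsets of $A$ appear as we iterate the ``delete one point'' operation. The base case is $k=1$, where $s = \max(n, d+2) = n$ (since $n \ge d+2$ for a genuinely $d$-dimensional configuration), and the statement reads $\fiber^{(1)}(A) \cong \fiber(A_{[n]}) = \fiber(A)$, which is a tautology. So assume $k \in [d]$ and that the result holds for $k$; I want to deduce it for $k+1$.

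Here I split into two cases according to whether $n - k + 1 > d+2$ or not. If $n > d+2$, I would apply \Cref{prop:equiv2} to get
\[
\fiber^{(k+1)}(A) \cong \sum_{i\in[n]} \fiber^{(k)}(A_{[n]\setminus i}),
\]
and then feed each summand to the inductive hypothesis. The subtlety is that the inductive hypothesis applied to the configuration $A_{[n]\setminus i}$ (which has $n-1$ points in $\R^d$) at level $k$ gives a Minkowski sum over subsets of size $s_i := \max((n-1) - k + 1, d+2) = \max(n-k, d+2)$. I then need to compare $\max(n-k,d+2)$, summed-and-unioned over the $n$ choices of $i$, with the target value $s = \max(n-k, d+2) = \max((n+1)-(k+1)+1, d+2)$ at level $k+1$ — note these two maxima coincide, which is the key arithmetic coincidence that makes the induction close. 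So each $\fiber^{(k)}(A_{[n]\setminus i})$ is normally equivalent to $\sum_{J \subseteq [n]\setminus i,\, |J| = s} \fiber(A_J)$, and summing over $i$ gives $\sum_i \sum_{J \in \binom{[n]\setminus i}{s}} \fiber(A_J)$. Now each $s$-subset $J \subsetneq [n]$ (and $s < n$ in this case, since $s = \max(n-k,d+2) \le n-1$ as $k \ge 1$ and $n > d+2$) is contained in $[n]\setminus i$ for at least one $i$, so the union of the index sets is exactly $\binom{[n]}{s}$; repeated summands only change the polytope up to normal equivalence (Minkowski-summing a polytope with a dilate of itself doesn't change the normal fan), so $\fiber^{(k+1)}(A) \cong \sum_{J \in \binom{[n]}{s}} \fiber(A_J)$, as desired. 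The remaining case $n = d+2$ is handled by the remark after \Cref{prop:equiv2}: there $\fiber^{(k+1)}(A) \cong \fiber^{(k)}(A)$ and $s = d+2 = n$, so the sum is the single term $\fiber(A_{[n]})$, and the statement again reduces to the ($k=1$ or indeed any $k$) base case, which is a tautology. (The degenerate $n = d+1$ case has all fiber polytopes points.)

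The main obstacle I anticipate is not any single hard step but rather getting the two arithmetic reconciliations exactly right: first, that $\max((n-1)-k+1,\,d+2) = \max((n)-(k+1)+1,\,d+2)$ so that the inductive hypothesis at level $k$ on an $(n{-}1)$-point configuration produces exactly the subset size $s$ wanted at level $k+1$; and second, that for this value of $s$ one has $s \le n-1$ whenever $n > d+2$ and $k \ge 1$, which is what guarantees every $s$-subset of $[n]$ is missed by some deletion and hence the index sets genuinely union up to all of $\binom{[n]}{s}$ (if $s$ were equal to $n$ this argument would collapse, which is precisely why the $n = d+2$ boundary case must be peeled off separately). A secondary point worth stating explicitly as a lemma, or at least a sentence, is the elementary fact that for polytopes $P$ one has $P + P \cong P$ (same normal fan), so that passing from a sum indexed by a multiset to the sum indexed by its underlying set does not affect normal equivalence; this is what lets us replace $\sum_i \sum_{J \in \binom{[n]\setminus i}{s}}$ by $\sum_{J \in \binom{[n]}{s}}$. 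Everything else is a direct chain of the already-established Minkowski-summand relations \Cref{prop:summand,prop:equiv1,prop:equiv2} together with \Cref{prop:intersecting}.
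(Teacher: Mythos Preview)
Your proposal is correct and follows essentially the same route as the paper: the paper proves the theorem by ``iterating \Cref{prop:equiv2} several times,'' handling the boundary $|J|=d+2$ via the remark after \Cref{prop:equiv2} and invoking idempotence of Minkowski sum under normal equivalence, which is exactly your induction on $k$ unwound. Your write-up is in fact more explicit than the paper's about the arithmetic checks (that $\max((n-1)-k+1,d+2)=\max(n-(k+1)+1,d+2)$ and that $s\le n-1$ when $n>d+2$), which the paper leaves implicit.
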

\begin{proof}
We prove this by iterating \Cref{prop:equiv2} several times. At each iteration, for $1<i \le k$, we replace each $\level\fiber {i+1}(A_J)$ by $\sum\limits_{j \in[n]} \fiber^{(i)}(A_{J\backslash j})$ if $|J| > d+2$ or by $\level\fiber i(A_J)$ if $|J| = d+2$. The iteration stops at level 1 with the desired result (notice that Minkowski sum is idempotent with respect to normal equivalence). 
\end{proof}

\begin{example}
\label{ex:hypersecondary}
Consider the regular hexagon $\polygon 6$. The secondary polytope $\level\fiber 1 (\polygon 6)$ is the 3-dimensional associahedron, as seen in \Cref{fig:associahedron}. Its border consists of 6 pentagons and 3 squares. By \Cref{thm:fiber}, the hypersecondary polytope $\level\fiber 2 (\polygon 6)$ is normally equivalent to the Minkowski sum of those 6 pentagons, see \Cref{fig:hyperassociahedron}. It has 66 vertices and the facets consist of 27 quadrilaterals (18 rectangles, 6 rhombi and 3 squares), 6 pentagons, 2 hexagons and 6 decagons. The short edges correspond to flips which do not change the set of vertices of the triangulation and the long edges correspond to those flips that do change the set of vertices. 

\begin{figure}[h]
	\centering
		\includegraphics[width = 0.6\textwidth]{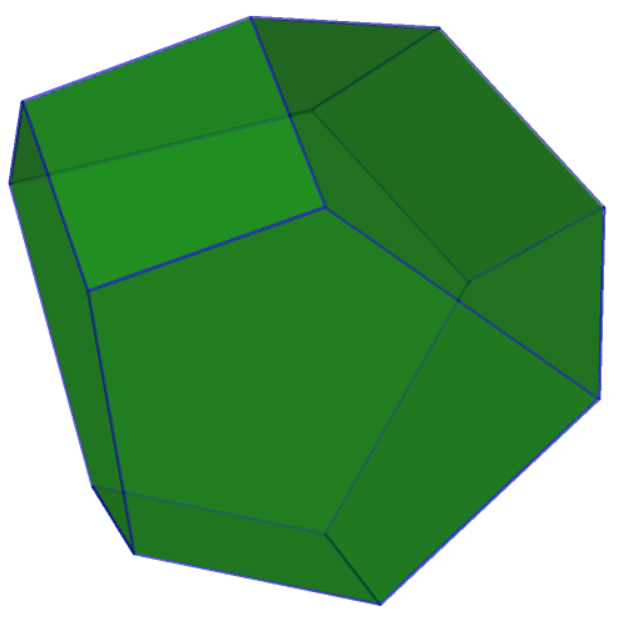}
	\caption{The associahedron $\level\fiber 1(\polygon 6)$.}
	\label{fig:associahedron}
\end{figure}

\begin{figure}[h]
	\centering
		\includegraphics[width = 0.6\textwidth]{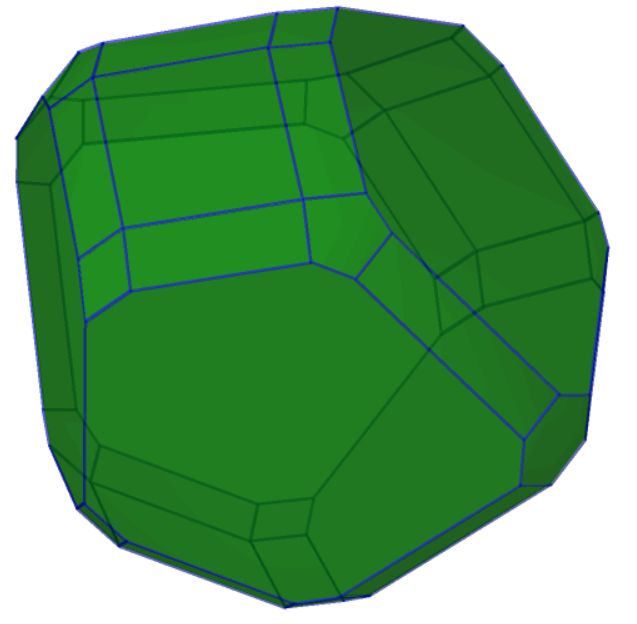}
	\caption{The hyperassociahedron $\level\fiber 2(\polygon 6)$.}
	\label{fig:hyperassociahedron}
\end{figure}
\end{example}

The GKZ vector corresponding to the triangulation from \Cref{ex:non_coherent} is in the center of one of the hexagons. There are 4 non-coherent hypertriangulations of $\level {\polygon 6}2$, which come in pairs with the same GKZ-vector, each in the center of one of the two hexagons. If instead of a regular hexagon we had a hexagon where the three long diagonals do not intersect in the same point, two of those subdivisions would become coherent and the hypersecondary polytope would have instead of each hexagon a triple of rhombi around the new vertex.

The order complex of the Baues poset $\level\baues 2(\polygon 6)$ is the (barycentric subdivision of the border of the) hyperassociahedron $\level\fiber 2(\polygon 6)$ where the hexagons are replaced by cubes. In particular it satisfies the Baues problem, that is, $\level\baues 2(\polygon 6)$ retracts onto $\level\fiber 2(\polygon 6)$. We will generalize this in \Cref{sec:baues}.

\section{Separation and lifting subdivisions}
\label{sec:separation}

Throughout this section let $A\subset\R^d$ be a point configuration labelled by $[n]$, and let  $Z(A)\subset\R^{d+1}$ be the zonotope generated by the vector configuration $A\times\{1\}\subset\R^d\times\{1\}$. Recall that a \emph{point} in $Z(A)$ is a subset $X\subset [n]$ and a \emph{tile} is an interval $[X,Y]\subset 2^{[n]}$, where $X\subset Y\subset [n]$.

Following~\cite{GaPo}, we say that two points $X_1, X_2 \subset [n]$ are \emph{separated with respect to $A$} or \emph{$A$-separated} for short if there is an affine  functional positive on $A_{X_1\backslash X_2}$ and negative on $A_{X_2\backslash X_1}$. Equivalently, if there is no oriented circuit $(C^+,C^-)$ in $A$ with $C^+\subset X_1\backslash X_2$ and $C^- \subset X_2\backslash X_1$.
Their motivation is that the notions of \emph{strongly separated} and \emph{chord separated} that were introduced in \cite{LeZe} and \cite{Galashin, OhPosSpe}
are equivalent to ``$\cyclic{n}{1}$-separated'' and ``$\cyclic{n}{2}$-separated'' respectively (\cite[Lemmas 3.7 and 3.10]{GaPo}).%
\footnote{Observe that \cite{OhPosSpe} uses the expression ``weakly separated'' for ``chord separated'', but ``weakly separated'' had a different meaning in \cite{LeZe}}
One of their main results is as follows (their statement is a bit more general, since it is stated for arbitrary oriented matroids, rather then ``point configurations''):

\begin{theorem}[\protect{\cite[Theorems 2.7 and 7.2]{GaPo}}]
\label{thm:maximal}
Let $A$ be a point configuration and let $m$ be the number of affinely independent subsets of $A$. Then:
\begin{enumerate}
\item No family of $A$-separated points in $A$ has size larger than $m$.
\item The map sending each zonotopal tiling to its set of vertices gives a bijection 
\[
\{\text{fine zonotopal tilings of $Z(A)$}\} \leftrightarrow
\{S\subset 2^{[n]} : S \text{ is $A$-separated and $|S|=m$}\}.
\]
\end{enumerate}
\end{theorem}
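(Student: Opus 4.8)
The plan is to pass, via the Bohne--Dress Theorem (\Cref{thm:BohneDress}) and \Cref{prop:lifting}, from the geometry of zonotopal tilings to combinatorics of one-element extensions of $\M^*(A)$, and then to establish three ingredients about the vertex set $V(T):=\{X : [X,X]\in T\}$ of a fine zonotopal tiling $T$ of $Z(A)$:
\begin{itemize}
\item[(A)] $V(T)$ is $A$-separated and $|V(T)|=m$;
\item[(B)] the map $T\mapsto V(T)$ is injective on fine zonotopal tilings;
\item[(C)] every $A$-separated collection $S\subseteq 2^{[n]}$ satisfies $S\subseteq V(T)$ for some fine zonotopal tiling $T$.
\end{itemize}
Granting these, Part (1) is immediate: an $A$-separated $S$ lies inside some $V(T)$ with $|V(T)|=m$, so $|S|\le m$. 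Part (2) then follows by assembling all three: by (A) the map is well defined into the family of $A$-separated collections of size $m$ and by (B) it is injective; conversely, given such an $S$ with $|S|=m$, (C) produces a fine $T$ with $S\subseteq V(T)$ and $|V(T)|=m=|S|$, forcing $S=V(T)$, which gives surjectivity.

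For (A), the separation statement is an orthogonality argument in the lifted oriented matroid $\widetilde{\M}:=(\M')^*$, where $\M'$ is the extension of $\M^*(A)$ corresponding to $T$, so that $\widetilde{\M}/w=\M(A)$. By \Cref{prop:lifting}(2), $X\in V(T)$ means precisely that the signed set $\sigma_X$ that is $-$ on $[n]\setminus X$, $+$ on $X$, and $+$ on $w$ is a covector of $\widetilde{\M}$. If $X_1,X_2\in V(T)$ and $(C^+,C^-)$ is a circuit of $\M(A)$ with $C^+\subseteq X_1\setminus X_2$ and $C^-\subseteq X_2\setminus X_1$, lift it to a circuit $C'$ of $\widetilde{\M}$ with $C'|_{[n]}=(C^+,C^-)$; orthogonality of $C'$ with $\sigma_{X_1}$ forces $C'(w)=-$ (otherwise $\sigma_{X_1}\cdot C'$ is everywhere nonnegative and somewhere positive on the support of $C'$), and then $\sigma_{X_2}\cdot C'$ is everywhere nonpositive and somewhere negative, contradicting orthogonality with $\sigma_{X_2}$. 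The count $|V(T)|=m$ I would obtain by a deletion--contraction induction on $n$: restricting $T$ in the direction of a generator $e$ gives a fine tiling of $Z(A\setminus e)$, the tiles using direction $e$ record a fine tiling of $Z(A/e)$, and $V(T)$ splits so that $|V(T)|$ obeys the same recursion as the number $m$ of independent subsets of $\M(A)$ (the evaluation $T_{\M(A)}(2,1)$ of the Tutte polynomial), with base cases when $e$ is a loop or coloop and when $n\le d+1$.

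For (B), observe that a fine zonotopal tiling is determined by its set of full-dimensional tiles $[X,X\cup B]$ with $B$ a basis of $\M(A)$, and such a parallelepiped $X+Z(B)$ belongs to $T$ if and only if all $2^{d+1}$ of its vertices $X\cup B'$, $B'\subseteq B$, lie in $V(T)$. Indeed $X+Z(B)$ has underlying matroid $U_{d+1,d+1}$, whose only rank-$(d+2)$ one-element lifting is $U_{d+2,d+2}$; hence it carries no nontrivial fine zonotopal tiling, so once all its corners are vertices of $T$ it can contain no vertex of $T$ in its interior and must itself be a tile. Thus $V(T)$ reconstructs $T$.

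The substantive part --- and the main obstacle --- is (C): from an $A$-separated $S$ one must build a fine zonotopal tiling $T$ of $Z(A)$ with $S\subseteq V(T)$, equivalently, by Bohne--Dress, a suitably generic one-element extension of $\M^*(A)$ whose set of vertices contains $S$. I would again try induction on $n$: fixing a generator $e$, the collection $S$ prescribes how its points sit over $Z(A\setminus e)$ and over $Z(A/e)$, so one can try to invoke the inductive hypothesis for these two smaller configurations and glue the resulting fine tilings into one of $Z(A)=Z(A\setminus e)+[0,v_e]$. The crux is that the two inductively produced tilings must agree along the ``shadow'' of $Z(A)$ over $Z(A\setminus e)$ for the gluing to work, and one must prove that $A$-separation of $S$ is exactly the condition guaranteeing that compatible choices exist --- this is the ``purity'' phenomenon and is where the real work lies in \cite{GaPo}. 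An alternative, more algebraic route to (C) would be to read off, directly from $S$, a localization (a sign pattern on the cocircuits of $\M^*(A)$) defining the desired extension and then verify the single-element extension axioms, with $A$-separation supplying the needed consistency.
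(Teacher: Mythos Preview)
There is nothing in the paper to compare your proposal to: \Cref{thm:maximal} is quoted verbatim from \cite{GaPo} (Theorems~2.7 and~7.2 there) and the present paper gives no proof of it. It is stated only as background for the authors' extension of the notion of separation from points to tiles.

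As for the proposal itself, your outline (A)--(C) is the standard decomposition, and your arguments for (A) and (B) are essentially correct. But, as you yourself concede, you do not actually prove (C): you describe two possible strategies (a deletion--contraction induction with a gluing step, or constructing a localization directly) and then say ``this is the `purity' phenomenon and is where the real work lies in \cite{GaPo}.'' That is an honest assessment, but it means the proposal is a proof \emph{plan} rather than a proof. The obstruction you identify---making the two inductively obtained tilings compatible along the shadow---is genuine and is exactly what requires the nontrivial machinery in \cite{GaPo}; it does not follow formally from $A$-separation without further argument. So the gap is not a mistake but a deliberate deferral of the one step that carries essentially all the content of the theorem.
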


We here extend their definition to separation of tiles. In the rest of the paper we omit $A$ and write ``separated'' instead of $A$-separated:

\begin{definition}
Let $[X_1,Y_1]$ and $[X_2,Y_2]$ be two tiles. We say they are separated if there is no circuit $(C^+,C^-)$ such that $C^+\subset Y_1\setminus X_2$, $C^-\subset Y_2\setminus X_1$ and $C^+\cup C^- \not\subseteq (Y_1\cap Y_2)\setminus (X_1\cup X_2)$.
\end{definition}

The following diagram illustrates the circuits forbidden by the first two conditions in this definition. The third condition forbids circuits with support fully contained in the middle cell:
\[
\begin{array}{|c|c|c|c|}
\cline{2-4}
\multicolumn{1}{c|}{}&\quad X_2\quad & Y_2\setminus X_2 & [n]\setminus Y_2 \\
\hline
X_1 & 0 & \ge0 & \ge0 \\
\hline
Y_1\setminus X_1 & \le0 & * & \ge0 \\
\hline
[n]\setminus Y_1 &  \le0 &  \le0 & 0 \\
\hline
\end{array}
\]

By the orthogonality between circuits and covectors in an oriented matroid \cite[Proposition 3.7.12]{OMbook}, and the fact that covectors of a realized oriented matroid are the sign vectors of affine functionals this definition is equivalent to:

\begin{proposition}
\label{prop:separated}
Two tiles $[X_1,Y_1]$ and $[X_2,Y_2]$ are separated if there is a covector (that is, an affine functional) that is positive on $(X_1\setminus X_2) \cup (Y_1\setminus Y_2)$, negative on  $(X_2\setminus X_1) \cup (Y_2\setminus Y_1)$,  and zero on $(Y_1\cap Y_2)\setminus \{X_1\cup X_2\}$.
\end{proposition}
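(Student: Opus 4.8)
The plan is to translate both conditions into linear algebra over the homogenized configuration and then read off the equivalence from a Farkas/Gordan alternative together with the conformal decomposition of affine dependences into circuits. First I would set up a dictionary for the nine ``blocks'' of $[n]$ obtained by recording, for each index $i$ and each $\ell\in\{1,2\}$, whether $i\in X_\ell$, $i\in Y_\ell\setminus X_\ell$, or $i\notin Y_\ell$ --- this is exactly the $3\times 3$ array displayed before the statement. Setting $P=(X_1\setminus X_2)\cup(Y_1\setminus Y_2)$, $N=(X_2\setminus X_1)\cup(Y_2\setminus Y_1)$, $Z=(Y_1\cap Y_2)\setminus(X_1\cup X_2)$ and $F=(X_1\cap X_2)\cup([n]\setminus(Y_1\cup Y_2))$, a routine check gives that $P,N,Z,F$ partition $[n]$, that $P$ and $N$ consist of three blocks each, $Z$ is the central block and $F$ the two ``corner'' blocks, that $Y_1\setminus X_2=P\cup Z$ and $Y_2\setminus X_1=N\cup Z$, and that $P,N,Z$ are precisely the blocks marked $\ge 0$, $\le 0$, $*$ in the array. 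With this dictionary, the two tiles fail to be separated exactly when there is a circuit $C=(C^+,C^-)$ with $C^+\subseteq P\cup Z$, $C^-\subseteq N\cup Z$ and $\operatorname{supp}(C)\not\subseteq Z$ (note that $\operatorname{supp}(C)\cap F=\emptyset$ is then automatic), while \Cref{prop:separated} asks whether this fails if and only if there is an affine functional $h$ that is positive on $A_P$, negative on $A_N$, and zero on $A_Z$.

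The direction ``covector exists $\Rightarrow$ separated'' is pure orthogonality: if $h$ is as above and $\lambda$ is the coefficient vector of a forbidden circuit, then $0=\sum_i\lambda_i\,h(a_i)$ because $\lambda$ is an affine dependence; the summands over $Z$ vanish since $h=0$ there, those over $F$ vanish since $\lambda=0$ there, and those over $P\cup N$ are nonnegative since $\lambda_i$ and $h(a_i)$ have matching signs (or $\lambda_i=0$). Hence $\lambda_i=0$ for all $i\in P\cup N$, so $\operatorname{supp}(C)\subseteq Z$, contradicting forbiddenness.

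For the converse I would argue by contraposition. If no such $h$ exists, the strict system $h>0$ on $A_P$, $-h>0$ on $A_N$ has no solution among affine functionals vanishing on $A_Z$, so by Gordan's alternative (applied inside that subspace) there are scalars $\mu_i\ge0$ for $i\in P\cup N$, not all zero, and $\nu_j$ for $j\in Z$, with $\sum_{i\in P}\mu_i\hat a_i-\sum_{i\in N}\mu_i\hat a_i-\sum_{j\in Z}\nu_j\hat a_j=0$, where $\hat a_i:=(a_i,1)$. Reading off coefficients, this is a nonzero affine dependence $\lambda$ on $A$ with $\lambda\ge0$ on $P$, $\lambda\le0$ on $N$, $\lambda=0$ on $F$, and $\operatorname{supp}(\lambda)\cap(P\cup N)\ne\emptyset$ (because some $\mu_i>0$). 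Decomposing $\lambda$ conformally into circuits, each summand $C$ satisfies $C^+\subseteq P\cup Z$, $C^-\subseteq N\cup Z$ and $\operatorname{supp}(C)\cap F=\emptyset$; since the summands' supports cover $\operatorname{supp}(\lambda)$, at least one of them has support not contained in $Z$, i.e. is a forbidden circuit, so the tiles are not separated. The same argument runs over an arbitrary oriented matroid, with the oriented-matroid Farkas lemma in place of Gordan's theorem and the conformal circuit decomposition being the defining property of a vector.

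The only part I expect to require care is the combinatorial bookkeeping of the first paragraph: confirming that the set where $h$ must be positive (resp. negative, zero) is exactly the union of the $\ge 0$ (resp. $\le 0$, $*$) blocks of the array, and that the clause $\operatorname{supp}(C)\not\subseteq Z$ corresponds precisely to ``$\mu$ not all zero'' in Gordan's theorem. Once that dictionary is pinned down, both implications are routine duality.
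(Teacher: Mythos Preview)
Your proof is correct. Both implications go through as you describe, and your bookkeeping (that $P,N,Z,F$ partition $[n]$, that $Y_1\setminus X_2=P\cup Z$ and $Y_2\setminus X_1=N\cup Z$, and that the ``forbidden circuit'' condition becomes $C^+\subseteq P\cup Z$, $C^-\subseteq N\cup Z$, $\operatorname{supp}(C)\not\subseteq Z$) is accurate.

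The paper's proof is shorter and more conceptual: it restricts to the subconfiguration $A'=A|_I$ with $I=(Y_1\cup Y_2)\setminus(X_1\cap X_2)=P\cup N\cup Z$, and then simply observes that the sign vector $(P,N)$ (with zeros on $Z$) is a covector of $A'$ if and only if it is orthogonal to every circuit of $A'$, quoting the standard circuit--covector orthogonality for oriented matroids. Since circuits of $A'$ are exactly circuits of $A$ supported on $I$, and covectors of $A'$ are exactly restrictions to $I$ of covectors of $A$, this is immediately the desired equivalence. Your argument unpacks that same duality by hand: the ``covector $\Rightarrow$ no forbidden circuit'' direction is the orthogonality pairing, and the converse reproves the existence of a non-orthogonal circuit via Gordan plus conformal decomposition. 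What you gain is self-containment (no black-box citation to the oriented matroid book); what the paper gains is brevity and a presentation that makes the oriented-matroid generality visible at a glance rather than as an afterthought.
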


The following diagram illustrates the sign-patterns of covectors witnessing that two tiles are separated:
\[
\begin{array}{|c|c|c|c|}
\cline{2-4}
\multicolumn{1}{c|}{}&\quad X_2\quad & Y_2\setminus X_2 & [n]\setminus Y_2 \\
\hline
X_1 & * & + & + \\
\hline
Y_1\setminus X_1 & - & 0 & + \\
\hline
[n]\setminus Y_1 &  - &  - & * \\
\hline
\end{array}
\]

\begin{proof}
Consider the subset $I=(Y_1\cup Y_2) \setminus (X_1\cap X_2)$ of $A$, and let $A'$ be the restriction of $A$ to $I$. Remember that the circuits of $A'$ are the circuits of $A$ with support contained in $A'$, while the covectors of $A'$ are the covectors of $A$ (all of them) restricted to $A'$. In particular, the characterization of covectors of $A'$ as the sign vectors orthogonal to all circuits says that 
\[
\left((X_1\setminus X_2) \cup (Y_1\setminus Y_2) \, , \,
(X_2\setminus X_1) \cup (Y_2\setminus Y_1) \right)
\]
is a covector in $A'$ if and only if a circuit as in the definition of separation does not exist.
\end{proof}

\begin{example}
Two ``singleton tiles'' (that is, $X_1=Y_1$ and $X_2= Y_2$) are separated as tiles if and only if they are separated as points in the sense of Galashin and Postnikov.
Two tiles containing the origin, that is with $X_1=X_2=\emptyset$, are separated  if and only if $Y_1$ and $Y_2$ \emph{intersect properly} in the usual sense, as \emph{cells} in $A$.
Finally,  the whole zonotope $2^{[n]} = [\emptyset, [n]]$ is separated from a tile $[X,Y]$  if and only if the cells $Y$ and $[n]\setminus X$ intersect properly; this is 
equivalent to $[X,Y]$ being a face of the zonotope $Z(A)$.
\end{example}

The following result clarifies the relation between separation of points and tiles.
In it, we say that a tile $[X,Y]$ is \emph{fine} if $Y\backslash X$ is an independent set. Fine tiles are the ones that can be used in fine zonotopal tilings of $Z(A)$.

\begin{proposition}
\label{prop:fine-separated}
Let  $[X_1,Y_1]$ and $[X_2,Y_2]$ be two tiles.
If every point $B_1 \in[X_1,Y_1]$ is separated from every point $B_2 \in [X_2,Y_2]$, then $[X_1,Y_1]$ and $[X_2,Y_2]$ are separated. The converse holds if the tiles are fine. 
\end{proposition}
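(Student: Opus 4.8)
\textbf{Proof plan for \Cref{prop:fine-separated}.}

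The plan is to reduce both directions to the covector characterization in \Cref{prop:separated} together with the restriction/contraction trick already used in its proof (passing to $A' = A|_I$ with $I = (Y_1\cup Y_2)\setminus(X_1\cap X_2)$). For the first (easy) direction, suppose every point $B_1\in[X_1,Y_1]$ is separated from every point $B_2\in[X_2,Y_2]$. Take in particular $B_1 = Y_1$, $B_2 = X_2$. Point-separation of $Y_1$ and $X_2$ gives an affine functional positive on $A_{Y_1\setminus X_2}$ and negative on $A_{X_2\setminus Y_1}$. I would like to upgrade this to the sign pattern in \Cref{prop:separated}, i.e.\ make it vanish on $(Y_1\cap Y_2)\setminus(X_1\cup X_2)$. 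The cleanest way is to work with the extreme pair of points and observe that the sign requirements of tile-separation only constrain $Y_1\setminus X_2$ (must be $\ge 0$, with the part $(Y_1\setminus Y_2)\setminus X_2$ strictly $+$ in the covector witness) and $Y_2\setminus X_1$ (must be $\le 0$): so I take $B_1 = (X_1)\cup\big((Y_1\setminus Y_2)\big)$ and $B_2 = (X_2)\cup\big((Y_2\setminus Y_1)\big)$ — the two points that ``push apart'' the symmetric differences — and the point-separating functional for this specific pair is, after checking the bookkeeping of which coordinates land in which block, exactly a covector of the form required by \Cref{prop:separated}. (Coordinates in $(Y_1\cap Y_2)\setminus(X_1\cup X_2)$ lie in neither $B_1\setminus B_2$ nor $B_2\setminus B_1$, so point-separation imposes no sign there; but to get a genuine witness I instead restrict to $A'$ as in the proof of \Cref{prop:separated}, where such coordinates are forced to be exactly the ``middle'' block and the orthogonality-to-circuits argument shows the required covector of $A'$ exists iff the forbidden circuit is absent — and absence follows because such a circuit would violate separation of $B_1$ and $B_2$.) So the first direction really amounts to: the definition of tile-separation is the non-existence of a single family of circuits, and that family is exactly the union, over the relevant pairs of points in the two tiles, of the circuits forbidden by point-separation, intersected with $\mathrm{supp}\subseteq A'$.

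For the converse, assume $[X_1,Y_1]$ and $[X_2,Y_2]$ are separated \emph{and} fine, i.e.\ $Y_1\setminus X_1$ and $Y_2\setminus X_2$ are independent. Fix arbitrary points $B_1\in[X_1,Y_1]$ and $B_2\in[X_2,Y_2]$; I must produce an affine functional positive on $A_{B_1\setminus B_2}$, negative on $A_{B_2\setminus B_1}$. By \Cref{prop:separated} there is a covector $v$ of $A$ with $v>0$ on $(X_1\setminus X_2)\cup(Y_1\setminus Y_2)$, $v<0$ on $(X_2\setminus X_1)\cup(Y_2\setminus Y_1)$, and $v=0$ on $(Y_1\cap Y_2)\setminus(X_1\cup X_2)$; on $X_1\cap X_2$ it may be anything. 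The only ``bad'' coordinates are those in $(Y_1\cap Y_2)\setminus(X_1\cup X_2)$ where $v$ vanishes but $B_1\setminus B_2$ or $B_2\setminus B_1$ might meet: if $j\in B_1\setminus B_2$ then $j\in Y_1$ and $j\notin X_2$, and if moreover $j\in Y_2$ and $j\notin X_1$ then $j$ is a zero of $v$ — a sign we need to fix. Here is where fineness enters: I want to perturb $v$ within the space of affine functionals (equivalently, covectors) to make these finitely many zeros into $+$'s (for $j\in B_1\setminus B_2$) or $-$'s (for $j\in B_2\setminus B_1$) without destroying the strict signs already achieved. Such a perturbation exists provided the sign vector I am targeting is itself realizable as a covector, which by orthogonality means: no circuit $(C^+,C^-)$ of $A$ has $C^+$ inside my target-positive set and $C^-$ inside my target-negative set. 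I would check that any such circuit would have support contained in $(Y_1\cup Y_2)\setminus(X_1\cap X_2)$ with $C^+\subseteq Y_1\setminus X_2$, $C^-\subseteq Y_2\setminus X_1$, and — crucially — $C^+\cup C^-$ \emph{not} contained in the middle block $(Y_1\cap Y_2)\setminus(X_1\cup X_2)$, because the newly-signed coordinates lie in $Y_1\setminus X_1$ or $Y_2\setminus X_2$ and the independence of these sets means no circuit is supported entirely there. That forbidden circuit is exactly the one excluded by the definition of tile-separation, contradiction. Hence the target covector exists; restricting it to $B_1\triangle B_2$ witnesses point-separation.

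\textbf{Expected main obstacle.} The delicate point is the converse, specifically justifying that fineness lets us ``lift'' the degenerate (zero) coordinates of the separating covector to the correct strict signs. I expect the clean way to run this is \emph{not} by an explicit perturbation but by directly exhibiting the desired sign vector $\sigma$ on the support $B_1\triangle B_2$ (or on the whole ground set, extending by the signs of $v$ and arbitrarily on $X_1\cap X_2$) and proving $\sigma$ is orthogonal to every circuit — the only circuits that could obstruct are those supported in the middle block together with part of $(Y_1\setminus X_1)\cup(Y_2\setminus X_2)$, and these are killed by independence of $Y_i\setminus X_i$ combined with the third clause of the definition of separated tiles. Getting the index-chasing exactly right — tracking for each coordinate which of the nine blocks of the $3\times 3$ array it falls into, and matching that against membership in $B_1$, $B_2$ — is the bulk of the work, but it is bookkeeping rather than a conceptual difficulty. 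One should also note explicitly that the converse genuinely needs fineness: without it, a tile can contain two points whose symmetric difference supports a circuit living inside the tile, and then the tiles can be separated while those two points are not (the $3\times 3$ ``$*$'' entry in the forbidden-circuit diagram becomes unconstrained precisely because of the third clause).
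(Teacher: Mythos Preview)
Your argument for the converse is essentially the paper's: the paper also takes the separating covector $V$, observes that the middle block $(Y_1\cap Y_2)\setminus(X_1\cup X_2)$ is independent (being contained in $Y_1\setminus X_1$), and uses this to prescribe arbitrary signs there while keeping the strict signs of $V$ elsewhere. You phrase this via orthogonality to circuits; the paper simply invokes the standard fact directly. Same proof.

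For the first direction you take a genuinely different route. The paper argues by induction on $|Y_1\setminus X_1|+|Y_2\setminus X_2|$: pick $i\in Y_1\setminus X_1$, apply the inductive hypothesis to get separating covectors for $[X_1\cup i,Y_1]$ and $[X_1,Y_1\setminus i]$ against $[X_2,Y_2]$, and perform covector elimination on $i$ to produce the covector for $[X_1,Y_1]$. Your circuit-based approach is more direct, but your exposition obscures it and contains two inaccuracies. First, the fixed pair $B_1=X_1\cup(Y_1\setminus Y_2)$, $B_2=X_2\cup(Y_2\setminus Y_1)$ does \emph{not} suffice: for this pair one computes $B_1\setminus B_2=(Y_1\setminus X_2)\setminus(\text{middle block})$, so a tile-forbidden circuit with partial (but not full) support in the middle block need not violate separation of this particular pair. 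Second, your claim that the tile-forbidden circuits are ``exactly the union'' over pairs of the point-forbidden circuits is false --- a circuit supported entirely in the middle block can be point-forbidden for some pair yet is never tile-forbidden --- but only the inclusion $\{\text{tile-forbidden}\}\subseteq\bigcup_{(B_1,B_2)}\{\text{point-forbidden}\}$ is needed, and that inclusion does hold. The clean formulation of your idea: given a tile-forbidden circuit $(C^+,C^-)$, set $B_1=X_1\cup C^+$ and $B_2=X_2\cup C^-$; then $B_i\in[X_i,Y_i]$ and $(C^+,C^-)$ witnesses non-separation of $B_1,B_2$. This uses only the first two conditions on the circuit, not the third --- which is exactly why the converse requires fineness.
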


\begin{proof}
For the first direction, by induction on $|Y_1\backslash X_1| + |Y_2\backslash X_2|$, we can assume that $[X_1,Y_1]$ is not a singleton and that
every tile properly contained in it is separated from $[X_2,Y_2]$. In particular, taking any element $i\in Y_1\backslash X_1$ we have that 
both $[X_1\cup i,Y_1]$ and  $[X_1,Y_1\backslash i]$ are separated from $[X_2,Y_2]$.
By \Cref{prop:separated}, that implies the following two covectors:
\[
\begin{array}{|c|c|c|c|}
\cline{2-4}
\multicolumn{1}{c|}{}&\quad X_2\quad & Y_2\backslash X_2 & [n]\backslash Y_2 \\
\hline
X_1 & * & + & + \\
\hline
i & * & + & + \\
\hline
Y_1\backslash X_1 \backslash i& - & 0 & + \\
\hline
[n]\setminus Y_1 &  - &  - & * \\
\hline
\end{array}
\qquad
\begin{array}{|c|c|c|c|}
\cline{2-4}
\multicolumn{1}{c|}{}&\quad X_2\quad & Y_2\backslash X_2 & [n]\backslash Y_2 \\
\hline
X_1 & * & + & + \\
\hline
Y_1\backslash X_1 \backslash i & - & 0 & + \\
\hline
i & - & - & * \\
\hline
[n]\setminus Y_1 &  - &  - & * \\
\hline
\end{array}
\]
If $i\in X_2$ or $i\in [n]\backslash Y_2$ then the first or the second covector, respectively, show that $[X_1,Y_1]$ and $[X_2,Y_2]$ are separated. If $i\in Y_2\backslash X_2$ then elimination of $i$ in these two covectors gives a covector with values
\[
\begin{array}{|c|c|c|c|}
\cline{2-4}
\multicolumn{1}{c|}{}&\quad X_2\quad & Y_2\backslash X_2 & [n]\backslash Y_2 \\
\hline
X_1 & * & + & + \\
\hline
i &   & 0 &  \\
\hline
Y_1\backslash X_1 \backslash i& - & 0 & + \\
\hline
[n]\setminus Y_1 &  - &  - & * \\
\hline
\end{array},
\]
which again shows that $[X_1,Y_1]$ and $[X_2,Y_2]$ are separated.

For the converse, 
suppose first that $[X_1,Y_1]$ and $[X_2,Y_2]$ are separated and let $V$ be the covector showing it. Let $B_1$ and $B_2$ be points in them. Since the set $C:=(Y_1\setminus X_1) \cap (Y_2 \setminus X_1)$ is independent and is contained in the zero-set of $V$, no matter what signs we prescribe for its elements there is a covector $V'$ that agrees with $V$ where $V$ is not zero and has the prescribed signs on $C$. This implies the points $B_1$ and $B_2$ are separated.
\end{proof}

\begin{theorem}
\label{thm:separated}
Let  $[X_1,Y_1]$ and $[X_2,Y_2]$ be two tiles.
Then, the following conditions are equivalent:
\begin{enumerate}
\item The tiles are separated.
\item There is a zonotopal tiling of $Z(A)$ using both.
\item There is a coherent zonotopal tiling of $Z(A)$ using both.
\item There is a polyhedral subdivision of $A$ using $Y_1\backslash X_2$ and $Y_2\backslash X_1$ as cells.
\item There is a coherent polyhedral subdivision of $A$ using $Y_1\backslash X_2$ and $Y_2\backslash X_1$ as cells.
\end{enumerate}
\end{theorem}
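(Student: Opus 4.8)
The plan is to close the cycle of equivalences by proving $(1)\Rightarrow(5)$, $(1)\Rightarrow(3)$, $(4)\Rightarrow(1)$ and $(2)\Rightarrow(1)$; the implications $(3)\Rightarrow(2)$ and $(5)\Rightarrow(4)$ are trivial. I would first put separation in a convenient form: writing $C_i:=Y_i\setminus X_{3-i}$ for the two cells in conditions (4) and (5), one checks $C_1\cap C_2=(Y_1\cap Y_2)\setminus(X_1\cup X_2)$, $C_1\setminus C_2=(Y_1\setminus Y_2)\cup(X_1\setminus X_2)$ and, symmetrically, $C_2\setminus C_1=(Y_2\setminus Y_1)\cup(X_2\setminus X_1)$, so that by \Cref{prop:separated} the tiles are separated if and only if there is an affine functional $\phi$ on $A$ with $\phi>0$ on $A_{C_1\setminus C_2}$, $\phi<0$ on $A_{C_2\setminus C_1}$ and $\phi=0$ on $A_{C_1\cap C_2}$. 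Both ``coherent'' directions are then obtained by turning this $\phi$ into an explicit lifting.

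For $(1)\Rightarrow(5)$, given such a $\phi$ I would set $\ell(a):=|\phi(a)|$ for $a\in C_1\cup C_2$ and $\ell(a):=M$ for $a\notin C_1\cup C_2$, where $M>\max_A|\phi|$. Then $\phi\le\ell$ on $A$ with equality exactly on $C_1$, and $-\phi\le\ell$ on $A$ with equality exactly on $C_2$; by the standard description of regular subdivisions this exhibits $C_1$ and $C_2$ as cells of $\level{S}{1}(A,\ell)$. For $(1)\Rightarrow(3)$, unwinding the definition of $S(Z(A)\stackrel{\pi}{\to}A,w)$ shows that a tile $[X,Y]$ is a cell of the coherent tiling $\level{S}{Z}(A,w)$ if and only if there is an affine functional $h$ on $A$ with $w=h$ on $Y\setminus X$, $w<h$ on $X$ and $w>h$ on $[n]\setminus Y$. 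Taking $h_1:=\phi$ and $h_2:=0$ (so $h_1-h_2=\phi$ has the sign pattern above), I would produce a single $w$ realizing both tiles by prescribing it on each of the nine regions cut out by membership in $X_i$, $Y_i\setminus X_i$ or $[n]\setminus Y_i$: on each region the requirements on $w$ are a single equality or one or two strict inequalities, and in every case these are consistent precisely because $\phi$ carries the required sign there (with $w$ taken arbitrarily large on $[n]\setminus(Y_1\cup Y_2)$ and arbitrarily small on $X_1\cap X_2$). Then $h_1$ and $h_2$ witness that $[X_1,Y_1]$ and $[X_2,Y_2]$ are both cells of $\level{S}{Z}(A,w)$.

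For $(4)\Rightarrow(1)$, if $S$ is a subdivision of $A$ with $C_1$ and $C_2$ among its cells then $F:=\conv(A_{C_1})\cap\conv(A_{C_2})$ is a common face of both. A circuit $(C^+,C^-)$ violating separation has $C^+\subseteq C_1$, $C^-\subseteq C_2$ and support not contained in $C_1\cap C_2$; its Radon point lies in $\conv(A_{C^+})\cap\conv(A_{C^-})\subseteq F$, and since a point of a face written as a strictly positive convex combination of points of the polytope can only use points of the face, all of $A_{C^+}$ and $A_{C^-}$ lie in $F$, forcing $C^+\cup C^-\subseteq C_1\cap C_2$ --- a contradiction.

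Finally, for $(2)\Rightarrow(1)$ let $S$ be any (not necessarily coherent) zonotopal tiling containing both tiles. By \Cref{thm:BohneDress} there is a single-element extension $\M'$ of $\M^*(A)$ with $S=\level{S}{Z}(A,w)$, and by \Cref{prop:lifting}(2) the sign vectors $([n]\setminus Y_1,X_1\cup\{w\})$ and $([n]\setminus Y_2,X_2\cup\{w\})$ are vectors of $\M'$. A circuit of $A$ violating separation is a cocircuit of $\M^*(A)$, hence extends to a cocircuit $D^*$ of $\M'$ with $D^*_w\in\{+,-,0\}$; requiring $D^*$ to be orthogonal to each of the two vectors above is a short sign chase that forces $D^*_w=0$ and squeezes the support of the circuit into $(Y_1\cap Y_2)\setminus(X_1\cup X_2)$, contradicting that it violates separation. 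I expect this last orthogonality computation to be the main obstacle: one has to keep the oriented-matroid duality straight (circuits of $A$ versus cocircuits of $\M^*(A)$ versus vectors of $\M'$), and one has to resist the tempting shortcut of refining $S$ to a fine tiling and appealing to \Cref{thm:maximal} together with \Cref{prop:fine-separated}, which fails here because a combinatorial point of a non-fine tile of $S$ need not survive as a vertex of a fine refinement.
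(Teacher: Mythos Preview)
Your proposal is correct and follows essentially the same route as the paper: the same cycle of implications, the same reformulation of separation via \Cref{prop:separated} in terms of the cells $C_1,C_2$, explicit liftings built from the separating functional for $(1)\Rightarrow(3)$ and $(1)\Rightarrow(5)$, and the Bohne--Dress theorem for $(2)\Rightarrow(1)$. The only cosmetic difference is that for $(2)\Rightarrow(1)$ the paper works in the one-element \emph{lift} of $\M(A)$ and performs covector elimination on the extra element, whereas you work in the dual picture (the extension of $\M^*(A)$ from \Cref{prop:lifting}) and run the orthogonality sign chase---these are equivalent arguments, the paper's being slightly shorter.
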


\begin{proof}
Throughout the proof, let $A=\{a_1,\dots,a_n\}$ and denote $\tilde a_i =(a_i,1)$ the corresponding generator of $Z(A)$.

\begin{itemize}
	\item $1\Rightarrow 3$. Suppose the tiles are separated. By \Cref{prop:separated} this implies there is a linear functional $v\in (\R^{d+1})^*$ such that $v\cdot \tilde a_i $ takes the following values on the generators of $Z(A)$:
\[
	\begin{array}{|c|c|c|c|}
	\cline{2-4}
	\multicolumn{1}{c|}{}&\quad X_2\quad & Y_2\setminus X_2 & [n]\setminus Y_2 \\
	\hline
	X_1 & * & >0 & >0 \\
	\hline
	Y_1\setminus X_1 & <0 & 0 & >0 \\
	\hline
	[n]\setminus Y_1 &  <0 &  <0 & * \\
	\hline
	\end{array}
\]
Let $w\in (\R^n)^*$ be defined as follows on each $i\in [n]$:
\[
	\begin{array}{|c|c|c|c|}
	\cline{2-4}
	\multicolumn{1}{c|}{}&\quad X_2\quad & Y_2\setminus X_2 & [n]\setminus Y_2 \\
	\hline
	X_1 & -N & -2\, v\cdot \tilde a_i & -v\cdot \tilde a_i \\
	\hline
	Y_1\setminus X_1 & 0 & 0 & 0 \\
	\hline
	[n]\setminus Y_1 &  -v\cdot \tilde a_i &  -2\,v\cdot \tilde a_i & +N \\
	\hline
	\end{array}
\]
where $N$ is a very large positive number. Since $w$ is negative in $X_1$, positive in $[n]\backslash Y_1$, and zero in $Y_1\backslash X_1$, 
the tile selected by $w$ in the subdivision $S(Z(A),w)$ is $[X_1,Y_1]$. Similarly, the vector ${w'}\in(\R^n)^*$  defined by ${w'}_i = w_i +2 v\cdot \tilde a_i$ has the following values
\[
	\begin{array}{|c|c|c|c|}
	\cline{2-4}
	\multicolumn{1}{c|}{}&\quad X_2\quad & Y_2\setminus X_2 & [n]\setminus Y_2 \\
	\hline
	X_1 & <0 & 0 & v\cdot \tilde a_i \\
	\hline
	Y_1\setminus X_1 & 2\,v\cdot \tilde a_i & 0 & 2\,v\cdot \tilde a_i \\
	\hline
	[n]\setminus Y_1 &  v\cdot \tilde a_i &  0 & >0 \\
	\hline
	\end{array},
\]
which shows that $[X_2,Y_2]$ is also in $S(Z(A),w)$, since the difference between $w$ and $w'$ is a linear function.

\item $2\Rightarrow 1$. By the Bohne-Dress Theorem, zonotopal tilings of $Z(A)$ correspond to \emph{lifts} of the oriented matroid of $Z(A)$. Here, a lift is an oriented matroid $\M$ of rank $d+2$ on the ground set $[n+1]$ and such that $\M/(n+1) =\M(A)$. The tiles of the subdivision defined by the lift $\M$ are the intervals $[X,Y]\subset 2^{[n]}$ such that $\M$ has a covector that is negative on $X$, zero on $Y\setminus X$, and positive on $[n+1] \backslash Y$. 

That is, our hypothesis is that there is a lift $\M$ of $A$ that contains the covectors
\[
([n+1]  \setminus Y_1 \, ,\,  X_1)
\qquad\text{ and }\qquad
(X_2\, ,\, [n+1] \setminus Y_2).
\]
Elimination of the element $n+1$ among these covectors gives us a covector of \Cref{prop:separated}.

\item $1\Rightarrow 5$. Let $v$ as in the proof of $1\Rightarrow 3$, and define $w\in (\R^n)^*$ as follows:
\[
	\begin{array}{|c|c|c|c|}
	\cline{2-4}
	\multicolumn{1}{c|}{}&\quad X_2\quad & Y_2\setminus X_2 & [n]\setminus Y_2 \\
	\hline
	X_1 & N & 0 & 0 \\
	\hline
	Y_1\setminus X_1 & -v\cdot \tilde a_i & 0 & 0 \\
	\hline
	[n]\setminus Y_1 &  -v\cdot \tilde a_i &  -v\cdot \tilde a_i & N \\
	\hline
	\end{array}.
\]
Then $w$ and the $w'$ defined by $w'_i = w_i + v\cdot \tilde a_i$ show that $Y_1\backslash X_2$ and $Y_2\backslash X_1$ are cells in $S(A,w)$.

\item $4\Rightarrow 1$ For $C_1:=Y_1\backslash X_2$ and $C_2:=Y_2\backslash X_1$ to be cells in a subdivision it is necessary that their convex hulls intersect in a common face. 
That is, there must be a covector in $A$ that is zero in $C_1\cap C_2$, negative on $C_1\setminus C_2$, and positive on $C_2\setminus C_1$. 
These are precisely the same conditions as required in \Cref{prop:separated}.
\item $3\Rightarrow 2$ and $5\Rightarrow 4$ are obvious.
\qedhere
\end{itemize}
\end{proof}

\begin{remark}
With this theorem, it is now easy to see that \Cref{lemma:onepiece} also holds for non coherent subdivisions. If $Y_1\backslash X_1 = Y_2\backslash X_2$ is a spanning set then there can not be a linear functional vanishing on it, so $[X_1,Y_1]$ and $[X_2,Y_2]$ are not separated (unless $X_1 = X_2$, in which case they are the same cell).
\end{remark}

\begin{remark}
The definition of separated points and tiles makes sense for an arbitrary oriented matroid $\M$, since it uses only the notion of circuits, and \Cref{prop:separated} still holds in tis more general setting.

The notions of zonotopal tiling and of subdivision also make sense for arbitrary oriented matroids:
the former is interpreted as ``extension of the dual oriented matroid'' via \Cref{thm:BohneDress}
and the latter is studied in detail in \cite{OMtri}. In this setting the implications 
$(2) \Rightarrow (4) \Rightarrow (1)$ of \Cref{thm:separated} still hold, the first one as a consequence of 
the oriented matroid analogue of  \Cref{prop:lifting} and the second one because our proof above works at the level of oriented matroids. Yet:
\begin{enumerate}
\item The notion of \emph{coherent} subdivisions needs a realization of the oriented matroid be given. Not only the notion does not make sense for nonrealizable oriented matroids. Also, different realizations of the same oriented matroid may have different sets of coherent subdivisions, and non-isomorphic secondary polytopes/zonotopes.

\item 
The implication $(4) \Rightarrow (2)$ fails in the example of \cite[Section 5.2]{OMtri} (see Proposition 5.6(i) in that section), and the implication $(1) \Rightarrow (4)$ fails in the \emph{Lawrence polytope} that one can construct from that example.
\end{enumerate}

\end{remark}

\begin{corollary}
\label{prop:subtiles}
Let $[X_1,Y_1]$ and $[X_2,Y_2]$ be two separated tiles. Then any pair of subtiles $[\tilde{X_1},\tilde{Y_1}]\subseteq [X_1,Y_1]$ and $[\tilde{X_2},\tilde{Y_2}]\subseteq [X_2,Y_2]$ are separated.
\end{corollary}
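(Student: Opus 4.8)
The plan is to leverage the equivalence $(1) \Leftrightarrow (2)$ (or $(1) \Leftrightarrow (4)$) from \Cref{thm:separated}, rather than manipulating covectors directly. By \Cref{thm:separated}, the separation of $[X_1,Y_1]$ and $[X_2,Y_2]$ gives us a zonotopal tiling $\mathcal{T}$ of $Z(A)$ containing both tiles. The key observation is that any zonotopal tiling can be refined: if $[X_i,Y_i]$ is a cell of $\mathcal{T}$ and $Y_i\setminus X_i$ is not independent, we may subdivide it further, and conversely any subtile $[\tilde X_i,\tilde Y_i]\subseteq [X_i,Y_i]$ appears in some refinement of $\mathcal{T}$. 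More carefully: pick a coherent refinement of the restriction of $\mathcal{T}$ to each of the two tiles that includes the prescribed subtiles, glue these refinements into $\mathcal{T}$, and check that the result is still a zonotopal tiling of $Z(A)$. Then apply the $(2)\Rightarrow(1)$ direction of \Cref{thm:separated} to conclude $[\tilde X_1,\tilde Y_1]$ and $[\tilde X_2,\tilde Y_2]$ are separated.

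Alternatively—and perhaps more cleanly—I would argue purely at the level of \Cref{prop:separated}. Let $V$ be a covector witnessing that $[X_1,Y_1]$ and $[X_2,Y_2]$ are separated: $V$ is positive on $(X_1\setminus X_2)\cup(Y_1\setminus Y_2)$, negative on $(X_2\setminus X_1)\cup(Y_2\setminus Y_1)$, and zero on $(Y_1\cap Y_2)\setminus(X_1\cup X_2)$. The claim is that $V$ (or a small perturbation of it within the covector axioms) already witnesses separation of the two subtiles. Indeed, since $\tilde X_i\subseteq \tilde Y_i$ with $X_i\subseteq \tilde X_i$ and $\tilde Y_i\subseteq Y_i$, one checks the six regions of the diagram in \Cref{prop:separated} for the subtiles are each contained in the union of a region of the same sign (or the $*$/zero region) for the original tiles, \emph{except} possibly the zero-set condition: we need $V$ to vanish on $(\tilde Y_1\cap \tilde Y_2)\setminus(\tilde X_1\cup \tilde X_2)$, and this set is contained in $(Y_1\cap Y_2)$ but might meet $X_1\cup X_2$, where $V$ need not be zero. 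The fix is to use a composition/conformal elimination argument: among the covectors of $A$ there is one supported off those troublesome coordinates that still realizes the required sign pattern on the (independent, or at least conformally accessible) leftover set. This is essentially the same maneuver used in the proof of \Cref{prop:fine-separated}.

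The main obstacle is exactly this zero-set subtlety: shrinking $X_i$ up to $\tilde X_i$ enlarges the set on which the separating functional is required to vanish, and the original witness $V$ carries no control there. I expect the cleanest resolution is the $(2)\Rightarrow(1)$ route of the first paragraph, since refining a zonotopal tiling is routine (every tile, being a cell of a polyhedral subdivision, admits a coherent refinement, and coherent refinements can be glued into an ambient tiling by a sufficiently generic small perturbation of the lifting function), and it sidesteps any delicate covector bookkeeping. I would therefore present the proof as: apply \Cref{thm:separated} $(1)\Rightarrow(2)$; refine the two tiles inside the tiling to reach the subtiles; apply \Cref{thm:separated} $(2)\Rightarrow(1)$.
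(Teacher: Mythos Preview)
Your main route---apply \Cref{thm:separated} $(1)\Rightarrow(2)$ (or $(3)$), refine the resulting zonotopal tiling so that the prescribed subtiles appear, then apply $(2)\Rightarrow(1)$---is exactly the paper's argument. The paper compresses it to one sentence (``such tiling uses $[\tilde X_1,\tilde Y_1]$ and $[\tilde X_2,\tilde Y_2]$''), tacitly using that any subtile of a cell occurs in some zonotopal refinement of that cell; you make this explicit, which is arguably an improvement.

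One correction to your side discussion of the covector route: you have the obstacle backwards. Since $X_i\subseteq \tilde X_i$ and $\tilde Y_i\subseteq Y_i$, one has
\[
(\tilde Y_1\cap\tilde Y_2)\setminus(\tilde X_1\cup\tilde X_2)\ \subseteq\ (Y_1\cap Y_2)\setminus(X_1\cup X_2),
\]
so the zero-set requirement for the subtiles is \emph{weaker}, not stronger, and is automatically satisfied by the original witness $V$. What can fail are the strict-sign conditions: for instance an element $a\in(\tilde X_1\setminus\tilde X_2)$ may lie in $(Y_1\setminus X_1)\cap(Y_2\setminus X_2)$, where $V$ vanishes but you need $V>0$. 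This does not affect your proposed proof, since you correctly abandon the direct covector approach in favor of the tiling argument.
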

\begin{proof}
By \Cref{thm:separated}, there is a zonotopal tiling using $[X_1,Y_1]$ and $[X_2,Y_2]$ and such tiling uses $[\tilde{X_1},\tilde{Y_1}]$ and $[\tilde{X_2},\tilde{Y_2}]$.
\end{proof}

\begin{proposition}
\label{prop:dependent_tiles}
Let $A$ be a configuration of $n$ pairwise independent points. Let $k\in [n-1]$.
Let $[X_1,Y_1]$ and $[X_2,Y_2]$ be two tiles that cover level $k$ (that is, $|X_i| < k < |Y_i|)$. Suppose that $[X_1,Y_1]$ and $[X_2,Y_2]$ are not-separated and that one of them is not fine.

Then, there are fine tiles $[X'_1,Y'_1]$ and $[X'_2,Y'_2]$  contained in $[X_1,Y_1]$ and $[X_2,Y_2]$, still covering level $k$ and still not separated.
\end{proposition}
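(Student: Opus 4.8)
The plan is to argue by induction on $|Y_1\setminus X_1|+|Y_2\setminus X_2|$, shrinking one of the two tiles by a single element at a time while keeping one \emph{fixed} witness of non-separation alive. Since $[X_1,Y_1]$ and $[X_2,Y_2]$ are not separated, the definition of separated tiles provides a circuit $(C^+,C^-)$ with $C^+\subseteq Y_1\setminus X_2$, $C^-\subseteq Y_2\setminus X_1$ and $C^+\cup C^-\not\subseteq(Y_1\cap Y_2)\setminus(X_1\cup X_2)$; fix it once and for all. The first observation is that in the homogenization of $A$ both $C^+$ and $C^-$ are nonempty (compare last coordinates in a linear dependence supported on $C^+\cup C^-$), so, being \emph{proper} subsets of the minimal dependent set $C^+\cup C^-$, each of $C^+$ and $C^-$ is affinely independent. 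Since $A$ is pairwise independent, every circuit has at least three elements, so whenever a tile $[X,Y]$ fails to be fine we in fact have $|Y\setminus X|\ge 3$.

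Next I would set up the single reduction step for, say, $[X_1,Y_1]$ (the step for $[X_2,Y_2]$ is symmetric). There are two ways to shrink it by one element $e\in Y_1\setminus X_1$: \emph{delete}, replacing $Y_1$ by $Y_1\setminus\{e\}$, or \emph{absorb}, replacing $X_1$ by $X_1\cup\{e\}$; in both cases the new tile is contained in $[X_1,Y_1]$. I claim that deleting an $e\notin C^+$, or absorbing an $e\notin C^-$, keeps $(C^+,C^-)$ a witness of non-separation for the new pair of tiles: the inclusions $C^+\subseteq Y_1\setminus X_2$ and $C^-\subseteq Y_2\setminus X_1$ are preserved exactly by this choice of $e$, and the third condition survives automatically because shrinking only makes the ``middle cell'' $(Y_1\cap Y_2)\setminus(X_1\cup X_2)$ smaller, so a set not contained in it before is still not contained in it after. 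To also keep the new tile covering level $k$, distinguish cases: if $|X_1|\le k-2$ and $|Y_1|\ge k+2$ then both operations are level-safe and any $e$ works, since $C^+\cap C^-=\emptyset$ forces $e\notin C^+$ or $e\notin C^-$; if $|X_1|=k-1$ then only deletion is level-safe, but then $|Y_1\setminus X_1|\ge 3$ together with $C^+$ affinely independent gives $Y_1\setminus X_1\not\subseteq C^+$, so a legal $e\notin C^+$ exists; symmetrically, if $|Y_1|=k+1$ only absorption is level-safe and $C^-$ independent supplies a legal $e\notin C^-$. These cases are exhaustive, since $|X_1|=k-1$ and $|Y_1|=k+1$ cannot both hold when $|Y_1\setminus X_1|\ge 3$.

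Finally I would iterate this reduction on $[X_1,Y_1]$ until $Y_1\setminus X_1$ is independent, i.e. the tile is fine, and then run the symmetric procedure on $[X_2,Y_2]$ (deleting from $Y_2$ avoiding $C^-$, absorbing into $X_2$ avoiding $C^+$); shrinking one tile never affects the fineness of the other, and the fixed circuit $(C^+,C^-)$ keeps witnessing non-separation throughout. The resulting fine tiles $[X_1',Y_1']$ and $[X_2',Y_2']$ are then contained in the originals, still cover level $k$, and are still not separated, as required. The step that needs genuine care — and that I expect to be the main obstacle — is precisely the case analysis above: one must verify that the ``stay above/below $k$'' constraint and the ``do not destroy the circuit'' constraint can always be satisfied simultaneously, and this is exactly where the independence of $C^+$ and $C^-$ (hence $Y_i\setminus X_i\not\subseteq C^\pm$ whenever $Y_i\setminus X_i$ is dependent) enters. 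The remainder is routine bookkeeping with the definition of separated tiles.
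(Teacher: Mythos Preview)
Your proof is correct and follows essentially the same route as the paper's: fix a witnessing circuit $(C^+,C^-)$ and shrink the non-fine tile one element at a time, choosing between deletion and absorption so as to both preserve the circuit conditions and keep the tile covering level $k$. The only cosmetic difference is in how the case split is organized: the paper branches on whether $(Y_1\setminus X_1)\setminus C$ is empty (and, when it is, uses $Y_1\setminus X_1=C$ to pick $a\in C^+$ to absorb or $b\in C^-$ to delete), whereas you branch on which of the two operations is level-safe and use the independence of $C^\pm$ to guarantee a legal element; the underlying argument is the same.
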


\begin{proof}
By induction on the dependence rank of the tiles we only need to show that if $[X_1,Y_1]$ is dependent then there is a tile $[X'_1,Y'_1]$ properly contained in $[X_1,Y_1]$, covering level $k$, and non-separated from $[X_2,Y_2]$.

Let $(C_+,C_-)$ be a circuit showing that $[X_1,Y_1]$ and $[X_2,Y_2]$ are not-separated. Let $C=C_+\cup C_-$ be its support.

If there is an element $a\in (Y_1\backslash X_1) \backslash C$ then both $[X_1\cup a, Y_1]$ and $[X_1, Y_1\backslash a]$ are not separated from  $[X_2,Y_2]$, and one of them still covers level $k$, since dependent sets are of size at least 3. 

If there is no such an $a$, then $ Y_1\backslash X_1 \subset C$. 
Since $C$ is a circuit we conclude that $ Y_1\backslash X_1 = C$. 
By definition, we have that $C_-  \subset Y_2$ and $C_+ \subset [n]\backslash X_2$. 
Again, we take as new tile $[X_1\cup a, Y_1]$ or $[X_1, Y_1\backslash b]$, depending on which of the two still covers level $k$, where $a \in C_+$ and $b\in C_-$.
\end{proof}

\begin{corollary}
\label{coro:fine_is_enough}
Let $A$ be a point configuration in general position (``uniform'') and let $k\in [n-1]$. If no hypertriangulation of $A^{(k)}$ contains two non-separated tiles, then no hypersimplicial subdivision of $A^{(k)}$ contains them either.
\end{corollary}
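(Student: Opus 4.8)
The plan is to prove the contrapositive: given a hypersimplicial subdivision $S$ of $\level Ak$ containing two non-separated tiles, I produce a hypertriangulation of $\level Ak$ that does the same. Fix $S$ together with non-separated tiles $[X_1,Y_1]$, $[X_2,Y_2]$ occurring as faces of $S$. If one of these faces is a vertex the situation degenerates and that vertex survives any refinement, so I may assume both cover level $k$; and if $S$ is already a hypertriangulation there is nothing to prove.

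First I would shrink the non-separated pair to fine tiles using \Cref{prop:dependent_tiles}: as long as one of the two tiles is not fine it lets me replace both by fine tiles, still inside the original faces, still non-separated, and still covering level $k$. The point of this reduction is a persistence property. A fine tile $[X,Y]$ that covers level $k$ and is a \emph{proper} subtile of a non-fine cell satisfies $\dim\level{[X,Y]}k<d$, so by general position no point of $\level Ak$ lies in the relative interior of $\level{[X,Y]}k$; hence $\level{[X,Y]}k$ admits no proper hypersimplicial subdivision, and being a face of $S$ it stays a face of every hypersimplicial refinement of $S$. (Vertices satisfy the same trivially, which takes care of the case in which the partner cell was already fine, using general position again to see that its lower-dimensional faces carry no stray points of $\level Ak$.)

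Second I would refine $S$ to a hypertriangulation $S'$, invoking the standard fact that every $\pi$-induced subdivision admits a fine $\pi$-induced refinement — and for $\pi\colon\level{\Delta_n}k\to\level Ak$ a fine $\pi$-induced subdivision is precisely a hypertriangulation — taking the refinement to leave the already-fine cells of $S$ untouched. By the persistence property the two fine non-separated tiles are still faces of $S'$. Hence $S'$ is a hypertriangulation containing two non-separated tiles, contradicting the hypothesis, and the corollary follows.

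The main obstacle is keeping the non-separated pair alive throughout the refinement, and this is exactly why \Cref{prop:dependent_tiles} is the tool of choice: the witnesses must be recorded \emph{at level $k$}, i.e.\ as faces of subdivisions of $\level Ak$, so the level-oblivious \Cref{prop:fine-separated}, which only yields non-separated points of unspecified level, is not enough by itself. The general-position hypothesis is what freezes the low-dimensional cells produced by the reduction so they persist in a fine subdivision. One cannot bypass the second step via zonotopal tilings: a non-liftable $S$ is no slice of a zonotopal tiling, and by \Cref{thm:separated} the non-separatedness being used is precisely what prevents such a lift.
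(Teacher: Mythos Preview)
Your proposal has a genuine gap in the persistence argument. The fine tiles $[X'_i,Y'_i]$ that \Cref{prop:dependent_tiles} produces are \emph{subtiles} of the cells $[X_i,Y_i]\in S$ in the sense of sub-intervals $X_i\subseteq X'_i\subseteq Y'_i\subseteq Y_i$, not \emph{faces} of $S$. Two things go wrong:
\begin{itemize}
\item Your dimension claim is false. A fine tile has $Y'\setminus X'$ independent, hence of size at most $d{+}1$, but size exactly $d{+}1$ is perfectly possible (indeed this is what the proof of \Cref{prop:dependent_tiles} typically produces when it removes one element at a time from a minimally dependent $Y\setminus X$). In that case $\level{[X',Y']}k$ is full-dimensional, so the ``no interior points'' argument does not apply.
\item Even when $\level{[X',Y']}k$ happens to be lower-dimensional, being a sub-interval of a cell does not make it a face of that cell. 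For instance, if $Y'\setminus X'$ is an affine basis then the only functional vanishing on it is zero, so $[X',Y']$ is never a proper face of the zonotope $[X_i,Y_i]$. Hence $\level{[X',Y']}k$ need not be a face of $S$, and an arbitrary hypertriangulation refining $S$ has no reason to contain it.
\end{itemize}

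The paper's proof avoids this by \emph{constructing} the refinement rather than hoping the fine tiles survive: once \Cref{prop:dependent_tiles} hands you the fine non-separated pair $[X'_1,Y'_1]\subset[X_1,Y_1]$ and $[X'_2,Y'_2]\subset[X_2,Y_2]$, you refine each cell $[X_i,Y_i]$ to a fine (zonotopal) subdivision that explicitly uses $[X'_i,Y'_i]$ as a tile, and then use general position to refine the remaining cells of $S$ arbitrarily. The resulting hypertriangulation contains $\level{[X'_1,Y'_1]}k$ and $\level{[X'_2,Y'_2]}k$ by construction.
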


\begin{proof}
Suppose that a subdivision $S$ contains two non-separated tiles  $[X_1,Y_1]$ and $[X_2,Y_2]$. Let $[X'_1,Y'_1]$ and $[X'_2,Y'_2]$  be the tiles guaranteed by 
\Cref{prop:dependent_tiles}. Then, we can refine $[X_1,Y_1]$ and $[X_2,Y_2]$ to fine subdivisions using $[X'_1,Y'_1]$ and $[X'_2,Y'_2]$. By general position this extends to a hypertriangulation refining $S$ and with two non-separated tiles.
\end{proof}

\section{Non-separated subdivisions}
\label{sec:non-separated}
We call a subdivision $S$ of $\level{A}{k}$ non-separated if it contains two non-separated cells. Non-separated subdivisions are certainly non-lifting.

\begin{example}
We here construct a non-separated subdivision in dimension two, which contrasts the fact that for $\polygon{n}$ such things do not exist~\cite{BalWel}.
Let $A$  be the configuration of the following 5 points in the plane: $p_1 = (1,2)$, $p_2 = (0,4)$, $p_3 = (4,4)$, $p_4 = (4,0)$ and $p_5 = (0,0)$. \Cref{fig:not_separated} on the right shows a hypertriangulation of $\level A2$ consisting of the triangles: 
\begin{gather*}
\level{[\emptyset,234]}2, \level{[\emptyset,245]}2,\\
\level{[2,1235]}2,\level{[2,2345]}2,\level{[4,1234]}2,\level{[4,1245]}2,\level{[4,1345]}2,\level{[5,1245]}2.
\end{gather*}
The circuit $(14,35)$ shows that the cell $\level{[2,2345]}2$ is not separated from the cells $\level{[4,1234]}2$ and $\level{[4,1245]}2$.

\begin{figure}[h]
\begin{minipage}{0.30\textwidth}
  \centering
  \includegraphics[width=0.80\textwidth]{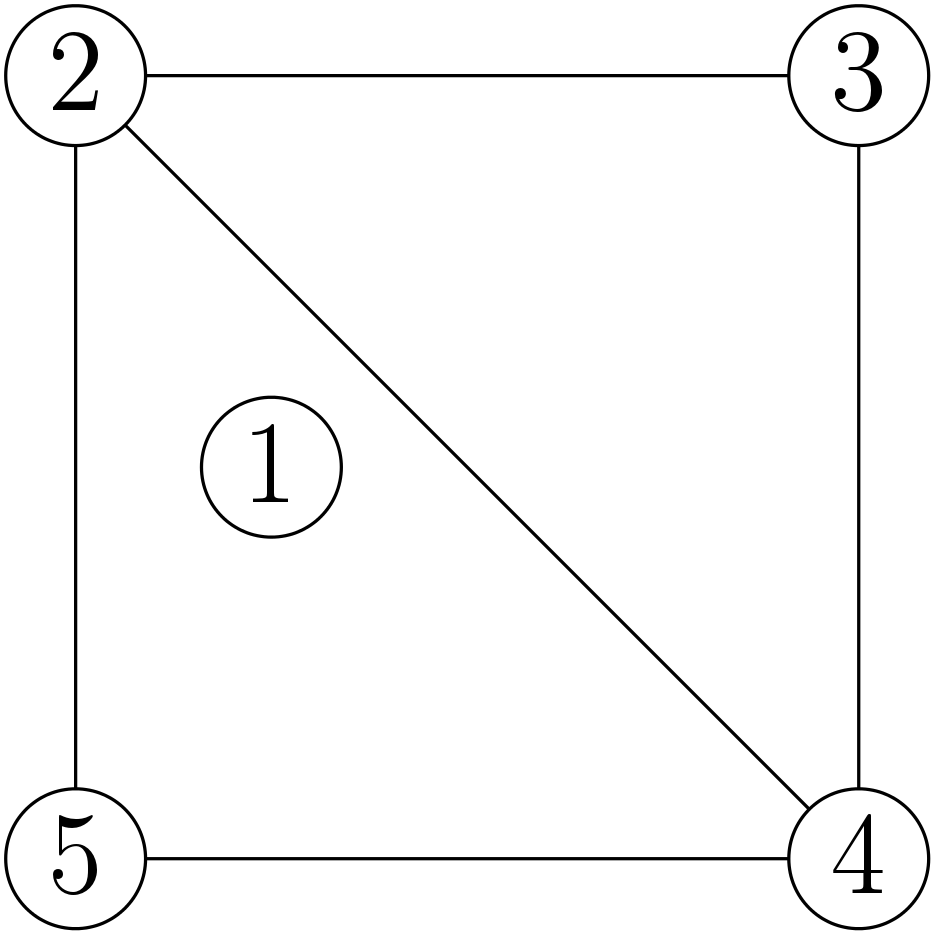}
\end{minipage}
\begin{minipage}{0.63\textwidth}
  \centering
  \includegraphics[width=0.80\textwidth]{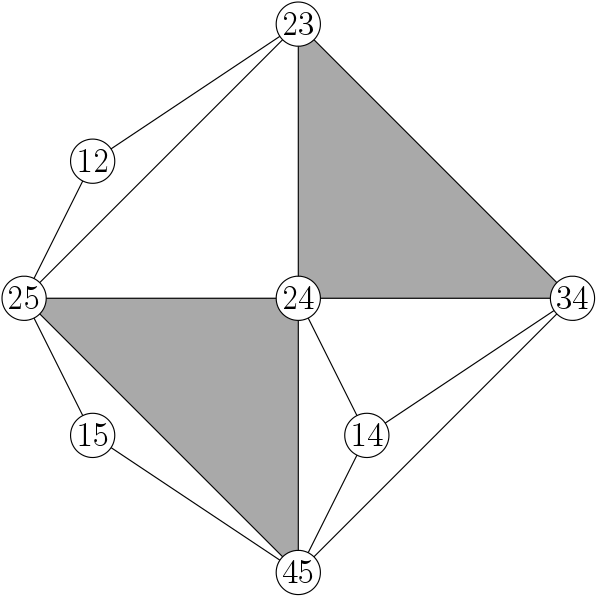}
\end{minipage}
\caption{A not separated hypertriangulation in the plane.}
\label{fig:not_separated}
\end{figure}

\end{example}

The following non-separated subdivision of $\level{\cyclic{4}{1}}2$ appears in \cite[Exm.~10.4]{Postnikov}:
\[
S=\{ 
\level{[1,123]}2,
\level{[1,134]}2,
\level{[4,124]}2,
\level{[4,234]}2
\}.
\]

Here we generalize it to

\begin{lemma}
\label{lem:non-separated}
For every odd $d$ and every $k\in [2,d-2]$ there is a non-separated hypertriangulation of $\level{\cyclic{d+3}{d}}k$.
\end{lemma}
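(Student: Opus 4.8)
The plan is to reverse-engineer a non-separated hypertriangulation of $\level{\cyclic{d+3}{d}}k$ from the known $d=1$ example by using the combinatorial structure of circuits of cyclic polytopes. Recall from \Cref{sec:preliminaries} that the circuits of $\cyclic nd$ are the alternating pairs $(\{a_1,a_3,\dots\},\{a_2,a_4,\dots\})$ with $a_1<a_2<\dots<a_{d+2}$. Since $n=d+3$, every circuit has support of size $d+2$, i.e.\ is obtained by deleting a single point from $[d+3]$. First I would fix the circuit $C$ we want to use as the obstruction: deleting a central index, so that the positive and negative parts $C^+,C^-$ each have size $(d+2)/2$ (note $d$ odd makes $d+2$ odd, so in fact $|C^+|$ and $|C^-|$ differ by one — we will pick the deleted element so that the partition is as balanced as possible, with $\lfloor (d+2)/2\rfloor$ and $\lceil (d+2)/2\rceil$). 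The two cells of the hypertriangulation that will fail to be separated are tiles of the form $\level{[X_1,Y_1]}k$ and $\level{[X_2,Y_2]}k$ with $C^+\subseteq Y_1\setminus X_2$, $C^-\subseteq Y_2\setminus X_1$, and $C^+\cup C^-\not\subseteq (Y_1\cap Y_2)\setminus(X_1\cup X_2)$, exactly negating \Cref{prop:separated}.

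Next I would build the full hypertriangulation around these two tiles. Because $\level{\cyclic{d+3}{d}}k$ has a very restricted structure — its Baues poset depends only on the oriented matroid, and the underlying point configuration $(\cyclic{d+3}{d})^{(k)}$ is itself low-dimensional — I expect that once the two offending tiles are chosen, the rest of the hypertriangulation is essentially forced, or can be completed greedily: take the placing/pushing-type hypertriangulation obtained from an ordering of $[d+3]$ that is compatible with the two chosen tiles, then locally modify it near $C$ to insert the non-separated pair. Concretely, I would write down the list of tiles explicitly for small cases (the $d=1$ example $S=\{\level{[1,123]}2,\level{[1,134]}2,\level{[4,124]}2,\level{[4,234]}2\}$ is the template) and then describe the general pattern by ``coning'' or suspending: the vertices $1$ and $d+3$ (or the extreme elements of the ordering) play the role of the mandatory $X$-sets, and the middle elements get distributed between the two tiles along the alternation of $C$. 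One must check (i) that every listed tile $\level{[X,Y]}k$ is genuinely a cell, i.e.\ $Y\setminus X$ is an affine basis of $\cyclic{d+3}{d}$ containing appropriately many indices above and below $k$ so the slice is full-dimensional — here the condition $k\in[2,d-2]$ is used to guarantee $|X|<k<|Y|$ for the relevant tiles; (ii) that the tiles cover $\conv(\level{\cyclic{d+3}{d}}k)$ without overlaps; and (iii) that the specific circuit $C$ witnesses non-separation of the designated pair via \Cref{prop:separated} (or its circuit formulation in the definition preceding \Cref{prop:separated}).

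The main obstacle I anticipate is step (ii): verifying that the explicitly constructed collection of tiles actually forms a subdivision of the hypersimplex slice, rather than just a locally consistent fragment. For $d=1$ this is immediate by inspection, but in general one needs either an inductive argument (suspension/one-point-extension reducing $d$ to $d-2$, exploiting that $\cyclic{d+3}{d}$ relates to $\cyclic{d+1}{d-2}$ by deleting the last two moment-curve coordinates, which changes $d$ by $2$ and keeps $d$ odd) or a direct flag/shellability argument. I would lean on the induction: assume a non-separated hypertriangulation $S'$ of $\level{\cyclic{d+1}{d-2}}{k-1}$ (valid since $k-1\in[1,d-4]$... one must be careful with the base of the induction and the exact range, possibly starting from $d=3$ directly with an explicit example and then stepping by $2$), and lift it to $\level{\cyclic{d+3}{d}}k$ by adding two new extreme vertices and taking the join with them in the level that shifts $k-1$ to $k$; the circuit of $\cyclic{d+1}{d-2}$ extends to a circuit of $\cyclic{d+3}{d}$, preserving non-separation. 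The delicate points are checking that the hypersimplex-slice operation commutes with this suspension in the way needed, and handling parity/range bookkeeping so the claimed interval $k\in[2,d-2]$ comes out exactly; these are the calculations I would carry out in full in the actual proof but only sketch here.
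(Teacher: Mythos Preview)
Your proposal does not match the paper's approach and has a genuine gap at the point you yourself flag as the main obstacle.

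The paper exploits the fact that for $n=d+3$ the Gale dual of $\cyclic{d+3}{d}$ has rank $n-d-1=2$, so zonotopal tilings of $Z(\cyclic{d+3}{d})$ correspond (via \Cref{prop:lifting} and \Cref{thm:BohneDress}) to chambers in a planar fan. It writes down an explicit \emph{coherent} zonotopal tiling $S_0$---one chamber of this fan---and identifies two cubical flips, each crossing one wall to an adjacent coherent tiling $S_1$, $S_2$. These two flips are incompatible as zonotopal flips because they share a tile to be removed; but at any level $k$ where that shared tile is not full-dimensional, both flips can be performed on the slice $\level{S_0}{k}$ one after the other. The result is automatically a hypersimplicial subdivision (each flip is a legitimate move between subdivisions), and it contains two cells that the obvious alternating circuit certifies as non-separated. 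The subdivision property thus comes for free; nothing needs to be checked tile-by-tile.

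Your plan instead fixes the non-separated pair first and then tries to complete it, either by an explicit tile list or by an induction stepping $d\mapsto d+2$ via a ``suspension'' from $\level{\cyclic{d+1}{d-2}}{k-1}$ to $\level{\cyclic{d+3}{d}}{k}$. The gap is that no such suspension is defined, and the one you gesture at does not exist: dropping two moment-curve coordinates lowers the dimension but keeps the number of points, so it does not relate $\cyclic{d+3}{d}$ to $\cyclic{d+1}{d-2}$. Absent that, your argument collapses to ``write down all tiles and verify they form a subdivision,'' which is exactly what the paper's flip construction sidesteps. The idea you are missing is to use the low corank via Gale duality, so that verifying the subdivision property is replaced by the trivial observation that wall-crossings in a $2$-dimensional fan take tilings to tilings.
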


\begin{proof}
A hypertriangulation of a configuration $A$ with $n=d+3$ has all its full-dimensional cells of one of the following forms, where $a < b\in [n]$ and we omit the superscript $(k)$, which will be clear from the context:
\[
{[\,\emptyset, \ [n]\backslash ab\,]}, \quad
{[\,a, \ [n]\backslash b\,]},\quad
{[\,b, \ [n]\backslash a\,]},\quad
{[\,ab, \ [n]\,]}.
\]
To simplify notation, we denote these four cells simply as $ab$, 
$\overline a b$, $a \overline b$ and $\overline a \overline b$, respectively (observe  that we always write the indices $a$ and $b$ in increasing order). For example, in this notation the subdivision $S$ of $\level{\cyclic{4}{1}}2$ mentioned above becomes
\[
S=\{ 
\overline14, \overline12, 3\overline4, 1\overline4
\}^{(2)}.
\]
One reason for this notation is that via the correspondence in \Cref{prop:lifting} the tile $[X,Y]$ corresponds in $\Gale{A}$ to the cone spanned by $X\cup\overline{[n]\setminus Y}$, where we use $\overline{B}$ to denote the set of vectors opposite to $B$, for $B\subset [n]$.

With this notation, \Cref{prop:lifting}(2) gives us that the following is a 
(coherent) zonotopal tiling of $Z(\cyclic{d+3}{d})$ (\Cref{fig:gale6} shows the case of $\cyclic63$):
\[
S_0 := 
\{\overline ab:  a \text{ odd}, b \text{ odd}\}\cup
\{\overline a\overline b:  a \text{ odd}, b \text{ even}\}\cup
\{ab:  a \text{ even}, b \text{ odd}\}\cup
\{a \overline b:  a \text{ even}, b \text{ even}\}.
\]

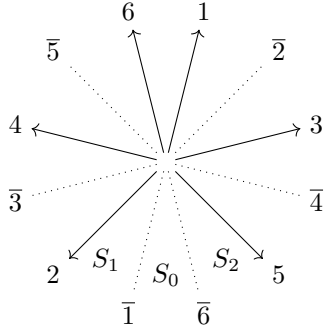
\begin{figure}[htb]
\begin{tikzpicture}[scale =.5]
  \node (O) at (0,0){}; 
  \node (P1) at (1,4) {1};
  \node (N2) at (3,3) {$\overline2$};
  \node (P3) at (4,1) {3};
  \node (N4) at (4,-1) {$\overline4$};
  \node (P5) at (3,-3) {5};
  \node (N6) at (1,-4) {$\overline6$};
  \node (N1) at (-1,-4) {$\overline1$};
  \node (P2) at (-3,-3) {2};
  \node (N3) at (-4,-1) {$\overline3$};
  \node (P4) at (-4,1) {4};
  \node (N5) at (-3,3) {$\overline5$};
  \node (P6) at (-1,4) {6};
  \draw[black,->] (O) -- (P1);
  \draw[dotted] (O) -- (N2);
  \draw[black,->] (O) -- (P3);
  \draw[dotted] (O) -- (N4);
  \draw[black,->] (O) -- (P5);
  \draw[dotted] (O) -- (N6);
  \draw[dotted] (O) -- (N1);
  \draw[black,->] (O) -- (P2);
  \draw[dotted] (O) -- (N3);
  \draw[black,->] (O) -- (P4);
  \draw[dotted] (O) -- (N5);
  \draw[black,->] (O) -- (P6);
  \node (S0) at (0,-3) {$S_0$};
  \node (S0) at (-1.6,-2.6) {$S_1$};
  \node (S0) at (1.6,-2.6) {$S_2$};
\end{tikzpicture}
\caption{The Gale transform of $\cyclic63$, with the regions corresponding to the zonotopal tilings $S_0$, $S_1$ and $S_2$ marked in it.}
\label{fig:gale6}
\end{figure}

$S_0$ admits the following cubical flips:
\begin{itemize}
\item Flip 1: negate the other symbol in every cell containing $\overline 1$. That is, remove
\[
\{\overline 1b:  b>1 \text{ odd}\}\cup
\{\overline 1\overline b:  b \text{ even}\}
\]
and insert
\[
\{\overline 1\overline b:  b>1 \text{ odd}\}\cup
\{\overline 1 b:  b \text{ even}\}.
\]

\item Flip 2: negate the other element in every cell containing $\overline n$. That is, remove
\[
\{a \overline n:  a<n \text{ even}\}\cup
\{\overline a\overline n:  a \text{ odd}\}
\]
and insert
\[
\{\overline a \overline n:  a<n \text{ even}\}\cup
\{ a\overline n:  a \text{ odd}\}.
\]
\end{itemize} 

These flips transform $S_0$ into two new coherent tilings $S_1$ and $S_2$, also shown in  \Cref{fig:gale6}. The two flips are not compatible, since both want to remove the tile $\overline1\overline n$ from $S_0$, and we can only remove it once. But $\overline1\overline n$  only affects level $1$ of the tiling, which means that in any  $\level{S_0}k$ with $k\ge 2$ we can do these two flips one after the other. After performing them we get a subdivision that contains (for $k\in [2,d-2]$) the non-separated cells
\[
\overline1 2 
\qquad \text{and}\qquad
n-1 \overline n.
\qedhere
\]
\end{proof}

To further generalize this construction we need the following easy lemma:

\begin{lemma}
\label{lemma:non-separated}
Let $A$ be a $d$-dimensional configuration of size $n$ in general position.
If $\level{A_{[n]\backslash i}}k$ has a non-separated subdivision $S$ for some $i\in[n]$
then $\level Ak$ and $\level A{k+1}$ have non-separated subdivisions too.
\end{lemma}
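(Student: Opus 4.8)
The plan is to produce the required non-separated subdivisions of $\level{A}{k}$ and $\level{A}{k+1}$ directly from the given non-separated subdivision $S$ of $\level{A_{[n]\backslash i}}{k}$, using the combinatorial machinery of \Cref{sec:preliminaries} and \Cref{sec:separation}. The starting point is the observation that a subdivision of a configuration missing one point $i$ can be ``reinserted'' into the full configuration $A$: concretely, think of $A$ as $A_{[n]\backslash i}$ together with the extra point $a_i$, and modify each cell of $S$ either by leaving it untouched (when $a_i$ lies outside it) or by placing $a_i$ on the correct side. Since $A$ is in general position, $a_i$ lies in the interior of a well-defined subset of the cells of $S$, and near-standard placing-type / pushing-type arguments (as in \cite{triangbook}) let us extend $S$ to an honest subdivision $\hat S$ of $\conv(\level A k)$ in which the cells of $S$ that do not ``see'' $a_i$ survive verbatim as tiles $[X,Y]$ with $i\notin Y$. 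The two non-separated cells $[X_1,Y_1]$, $[X_2,Y_2]$ of $S$ can be taken to both avoid $a_i$ after, if necessary, first refining $S$ slightly using \Cref{prop:dependent_tiles} and \Cref{coro:fine_is_enough} to arrange that the witnessing circuit $(C^+,C^-)$ does not involve $i$; then those two cells persist in $\hat S$, and since a circuit of $A_{[n]\backslash i}$ is still a circuit of $A$, the tiles $[X_1,Y_1]$ and $[X_2,Y_2]$ remain non-separated \emph{in $A$}. This handles $\level A k$.

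For $\level A{k+1}$, the idea is to use the ``lifting by one level'' correspondence encoded in \Cref{lemma:type1} and \Cref{prop:summand}. Given the subdivision $\hat S$ of $\level A k$ containing two non-separated tiles avoiding $i$, I would pass through a zonotopal tiling: by \Cref{thm:separated} the non-separation of $[X_1,Y_1]$ and $[X_2,Y_2]$ is equivalent to the \emph{non-}existence of a zonotopal tiling using both, but what we actually want is to realize them inside a single hypersimplicial subdivision at level $k+1$. The cleanest route is to adjoin $i$ to each of the two offending cells: replace $[X_j,Y_j]$ by $[X_j\cup i, Y_j\cup i]$. This raises the level by one (the new $X_j\cup i$ has one more element), the new cells still cover level $k+1$ provided the original tiles were not ``tight'' at the top — which we may assume by the general-position hypothesis and a further application of \Cref{prop:dependent_tiles} — and crucially, separation is unaffected: the defining circuit $(C^+,C^-)$ with $C^+\cup C^-\subseteq [n]\backslash i$ still satisfies $C^+\subseteq (Y_1\cup i)\backslash (X_2\cup i)$ and $C^-\subseteq (Y_2\cup i)\backslash (X_1\cup i)$ and is not swallowed by the intersection, exactly as in the proof of \Cref{lem:non-separated} where the two incompatible flips were performed ``one after the other'' precisely because the obstructing tile only lived at a lower level. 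Completing $\{[X_1\cup i,Y_1\cup i],[X_2\cup i,Y_2\cup i]\}$ to a full subdivision of $\level A{k+1}$ is then a routine extension argument (general position guarantees a hypertriangulation refining any partial collection of mutually compatible — here just geometrically non-overlapping — cells).

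The main obstacle I anticipate is bookkeeping the ``level-covering'' condition: I must ensure that after inserting or deleting the element $i$, the tiles in question still have $|X|<k<|Y|$ (resp. $<k+1<$), i.e. that $\level{[X,Y]}{k}$ remains positive-dimensional rather than degenerating to a vertex. The safe way to guarantee this is to first use \Cref{prop:dependent_tiles} (and \Cref{coro:fine_is_enough}) to pass to \emph{fine} non-separated tiles, for which $|Y\backslash X|=d+1$, giving ample room at both ends when $k\in[2,n-3]$ — and the range of $k$ in the intended application is exactly where this slack exists. A secondary technical point is checking that the reinsertion of $a_i$ into a subdivision of $A_{[n]\backslash i}$ genuinely yields a subdivision of $A$ and not merely of $\conv(A_{[n]\backslash i})$; since $A$ is in general position and affinely spans $\R^d$, $a_i$ either lies in $\conv(A_{[n]\backslash i})$ (and a standard pushing refinement works) or is a new vertex (and a placing extension works), so this is routine but should be stated carefully. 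Everything else — that circuits of a subconfiguration are circuits of the ambient configuration, that separation is a purely circuit-theoretic notion (\Cref{prop:separated}), and that general position allows completion to a hypertriangulation — is immediate from the results already established.
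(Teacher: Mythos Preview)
Your level-$k$ argument is morally the same as the paper's but burdened with worries that don't arise. Every cell of $S$ already has $i\notin Y$ (since $S$ lives in $A_{[n]\backslash i}$) and the witnessing circuit already avoids $i$, so no preliminary refinement via \Cref{prop:dependent_tiles} is needed. The paper simply keeps \emph{all} of $S$ intact and adds the cells $\level{[X,Y\cup i]}{k}$ for those $[X,Y]$ on the $i$-visible boundary of $Z(A_{[n]\backslash i})$; this is exactly your ``placing'' picture, but with the observation that $\conv(\level{A_{[n]\backslash i}}{k})$ sits as an intact subregion of $\conv(\level{A}{k})$, so nothing inside it needs to be touched.

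The level-$(k+1)$ argument has a genuine gap. Your observation that $[X_j\cup i,Y_j\cup i]$ are still non-separated and still cover level $k+1$ is correct and is the right idea. But you then assert that completing the two cells $\{[X_1\cup i,Y_1\cup i],[X_2\cup i,Y_2\cup i]\}$ to a full hypersimplicial subdivision is ``routine'' because ``general position guarantees a hypertriangulation refining any partial collection of mutually compatible cells.'' This is not a standard fact for hypersimplicial subdivisions, and you give no argument for it; in particular, by \Cref{thm:separated} no zonotopal tiling contains both shifted cells, so any such completion must be non-lifting, which is precisely the delicate thing you are trying to construct. (Your appeal to \Cref{lemma:type1} and \Cref{prop:summand} does not help here: those results concern \emph{coherent} subdivisions and normal fans.)

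The paper closes this gap with a one-line complementation trick: send $S$ to $\overline{S}$ at level $(n-1)-k$ of $A_{[n]\backslash i}$ via $[X,Y]\mapsto[([n]\backslash i)\backslash Y,\ ([n]\backslash i)\backslash X]$, apply the already-proven level-$k$ extension to get $\overline{S'}$ on $A$, then complement back in $[n]$ to land at level $k+1$. Tracing a cell $[X,Y]$ of $S$ through these two complementations yields exactly your $[X\cup i,Y\cup i]$, but now the completion comes for free from the first half of the lemma rather than from an unproven extension principle.
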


\begin{proof}
For $\level Ak$ do the following:
Extend $S$ to a subdivision $S'$ of $A$ by adding all the cells of the form $\level{[X, Y\cup i]}k$ with $[X,Y] \subset 2^{[n]}$ such that ${[X, Y\cup i]}$
is separated from $[\emptyset, [n]\backslash i]$. (The latter is equivalent to saying that $[X,Y]$ is contained in a facet of $Z(A_{[n]\backslash i})$ whose normal vector has positive scalar product with $i$). $S'$ is non-separated since it contains $S$.

For $\level A{k+1}$ apply the same construction upside-down. That is, consider the non-separated subdivision $\overline S$ of $\level{A_{[n]\backslash i}}{n-k-1}$ obtained from $S$ via the map $[X,Y] \to [[n]\backslash Y,[n]\backslash X]$. From $\overline S$ construct a non-separated subdivision $\overline{S'}$ of $\level{A}{n-k-1}$ as above, then turn $\overline{S'}$ upside-down  to get a non-separated subdivision of $\level{A}{k+1}$.
\end{proof}

\begin{corollary}
\label{coro:non-separated}
For every odd $d$, every $n\ge d+3$, and every $k\in [2,n-2]$,
there is a non-separated hypertriangulation of $\level{\cyclic{n}{d}}k$.
\qed
\end{corollary}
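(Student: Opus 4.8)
The plan is to argue by induction on $n$: the base case $n=d+3$ is \Cref{lem:non-separated} itself, the inductive step is driven by \Cref{lemma:non-separated}, and a concluding application of \Cref{coro:fine_is_enough} promotes the (possibly coarse) non-separated subdivisions obtained along the way to non-separated \emph{hypertriangulations}. Note that oddness of $d$ enters only through the base case and is then carried along untouched.

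For the base case, \Cref{lem:non-separated} provides a non-separated hypertriangulation of $\level{\cyclic{d+3}{d}}{k}$ for every $k\in[2,n-2]=[2,d+1]$; indeed the two cells $\overline 1 2$ and $(n-1)\overline n$ that witness non-separation there have positive-dimensional image at level $k$ exactly for $k$ in this range. For the inductive step, fix $n>d+3$, assume the statement for $n-1$, and choose any vertex $i$ of $\cyclic nd$. Deleting $i$ leaves a configuration whose oriented matroid is that of $\cyclic{n-1}{d}$, and since separation of points and of tiles is defined purely in terms of circuits, $\level{(\cyclic nd)_{[n]\setminus i}}{j}$ carries a non-separated subdivision for every $j\in[2,n-3]$. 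Then \Cref{lemma:non-separated} applied to $A=\cyclic nd$ yields non-separated subdivisions of both $\level{\cyclic nd}{j}$ and $\level{\cyclic nd}{j+1}$ for each such $j$; choosing $j=k$ takes care of $k\in[2,n-3]$, and choosing $j=n-3$ disposes of the last level $k=n-2$. Since $\cyclic nd$ is in general position, \Cref{coro:fine_is_enough} now replaces each of these non-separated subdivisions by a non-separated hypertriangulation of $\level{\cyclic nd}{k}$, completing the induction.

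The difficulties are organizational rather than substantial. First, the base case must already supply non-separated hypertriangulations at \emph{every} level up to $n-2=d+1$, because \Cref{lemma:non-separated} can raise the level only one unit per step (while also increasing $n$), so a short initial segment of levels would not suffice to reach the top level by induction. Second, \Cref{lemma:non-separated} outputs a polyhedral subdivision that is typically not fine — the cells it appends to $S$ need not be simplices — so passing to hypertriangulations really does require \Cref{coro:fine_is_enough}, and hence the uniformity of the cyclic polytope. Third, one must keep in mind that a vertex-deletion of the \emph{embedded} cyclic polytope is not literally $\cyclic{n-1}{d}$ but merely realizes its oriented matroid; this is harmless because the existence of a non-separated subdivision of $\level{\cdot}{k}$ depends only on the oriented matroid.
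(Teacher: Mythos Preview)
Your argument is correct and is exactly the induction the paper's bare $\qed$ abbreviates: base case $n=d+3$ from \Cref{lem:non-separated}, inductive step from \Cref{lemma:non-separated}, with \Cref{coro:fine_is_enough} supplying the passage from the (not necessarily fine) subdivisions produced by \Cref{lemma:non-separated} back to hypertriangulations. Your reading of the base-case range as $[2,d+1]$ rather than the stated $[2,d-2]$ is also what the paper needs for the induction to reach level $n-2$ (and is supported by the proof of \Cref{lem:non-separated}: the witness cells $\overline 1\,2$ and $(n{-}1)\,\overline n$ cover exactly those levels, and for $d=1$ the range $[2,d-2]$ would be empty while Postnikov's example at $n=4$, $k=2$ is precisely what the construction specializes to).
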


\begin{question}
Are there non separated hypertriangulations of $\level{\cyclic{n}{d}}k$ for $d\ge 4$ even? The case of $\cyclic{n}{2}$ suggests that the answer is no.
\end{question}

\section{Baues posets for $A=\polygon n$}
\label{sec:baues}

In this section we will restrict ourselves to the case when $A$ is a convex polygon $\polygon n$.
\begin{definition}
Let $S = \{\level{[X_i,Y_i]}{k_i}\}_{i\in I}$ be a subdivision of $\level{\polygon n}{k}$. We define 
\begin{align*}
S^+ &:= \{[X_i,Y_i] \mid i\in I \quad |Y_i| > k+1 \}\\
S^- &:= \{[X_i,Y_i] \mid i\in I \quad |X_i| < k-1 \}
\end{align*}
\end{definition}

\begin{proposition}
\label{prop:commute}
Let $S$ be a zonotopal tiling of $Z(\polygon n)$. Then
\[
\slevel{S}{k}{+} = \slevel{S}{k+1}{-}
\]
\end{proposition}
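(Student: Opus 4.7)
The plan is a direct unpacking of the definitions; the statement turns out to be a bookkeeping identity about the tiles of $S$, and no special feature of the polygon is actually used.

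First I would recall, via \Cref{prop:intersecting}, that the subdivision $\level{S}{k}$ has two kinds of cells: the positive-dimensional cells $\level{[X_i,Y_i]}{k}$ indexed by tiles $[X_i,Y_i]\in S$ with $|X_i|<k<|Y_i|$, together with the standalone $0$-dimensional cells corresponding to the singleton tiles $[X_i,X_i]\in S$ with $|X_i|=k$.

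For $\slevel{S}{k}{+}$ the defining condition $|Y_i|>k+1$ already excludes every singleton cell at level $k$ (for which $|Y_i|=k$), so every element of $\slevel{S}{k}{+}$ is a positive-dimensional cell; combining $|X_i|<k<|Y_i|$ with $|Y_i|\ge k+2$ gives
\[
\slevel{S}{k}{+}=\bigl\{[X,Y]\in S \,:\, |X|\le k-1 \text{ and } |Y|\ge k+2\bigr\}.
\]
Symmetrically, for $\slevel{S}{k+1}{-}$ the condition $|X_i|<k$ excludes the singleton cells at level $k+1$ (for which $|X_i|=k+1$), and combining $|X_i|<k+1<|Y_i|$ with $|X_i|\le k-1$ yields
\[
\slevel{S}{k+1}{-}=\bigl\{[X,Y]\in S \,:\, |X|\le k-1 \text{ and } |Y|\ge k+2\bigr\}.
\]

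Both sides thus describe exactly the collection of tiles of $S$ whose slice is positive-dimensional at both levels $k$ and $k+1$, so they coincide. The only obstacle is getting the notation straight: one has to observe that the constraints appearing in the definitions of $S^+$ and $S^-$ automatically discard the isolated singleton vertices of $\level{S}{k}$ and $\level{S}{k+1}$, after which the equality becomes a tautology.
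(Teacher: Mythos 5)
Your proof is correct and follows essentially the same approach as the paper, which simply records that both sides unpack to $\{[X,Y]\in S : |X|<k,\ |Y|>k+1\}$. Your elaboration of which singleton cells get discarded by the $+$ and $-$ conditions, and your observation that no feature of the polygon is used, are accurate but amount to spelling out the same bookkeeping the paper summarizes in one line.
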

\begin{proof}
It is straightforward to check that both sets equal 
\[\left\{[X,Y] \in S \mid |X|<k \quad |Y|>k+1\right\}\]
\end{proof}

The proposition suggests we use the notation 
\[
\level{S}{k+\frac12} : = \slevel{S}{k+1}{-} = \slevel{S}{k}{+}
\]
and define the following poset:

\begin{definition}
We define $\level\baues{k+\frac{1}{2}}(\polygon n)$ to be the poset on the set
\[
\{\level{S}{k+\frac12} \mid S \in \baues^Z(\polygon n)\}
\]
where the order is refinement,  as in subdivisions: $S_1 < S_2$ if and only if $\forall \sigma \in S_1 \enspace \exists \tau \in S_2: \enspace \sigma \subseteq \tau$.
We have
 two natural order-preserving maps \ $\UP: \level\baues{k}(\polygon n) \to \level\baues{k+\frac{1}{2}}(\polygon n)$ and $\DOWN: \level\baues{k+1}(\polygon n) \to \level\baues{k+\frac{1}{2}}(\polygon n)$ such that for every $S \in \baues^Z(\polygon n)$ we have
\[
\UP(\level{S}{k}) = 
\DOWN(\level{S}{k+1})=
\level{S}{k+\frac12}.
\]
\end{definition}

\begin{remark}
The maps $\UP$ and $\DOWN$ are well defined thanks to the fact that all hypersimplicial subdivisions of $\polygon{n}$ are lifting (\cite{BalWel}). 
For more general configurations the definitions above would only make sense restricted to lifting subdivisions. 
\end{remark}

\begin{example}
\label{example:T}
\begin{figure}[h]
\begin{minipage}{0.54\textwidth}
  \centering
  \includegraphics[width=0.8\textwidth]{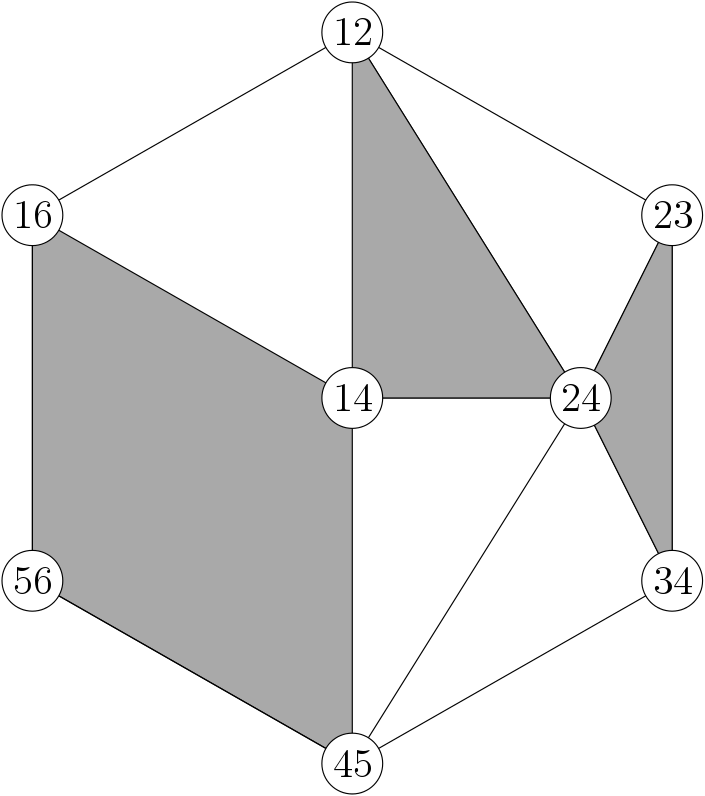}
\end{minipage}
\begin{minipage}{0.35\textwidth}
  \centering
  \includegraphics[width=0.8\textwidth]{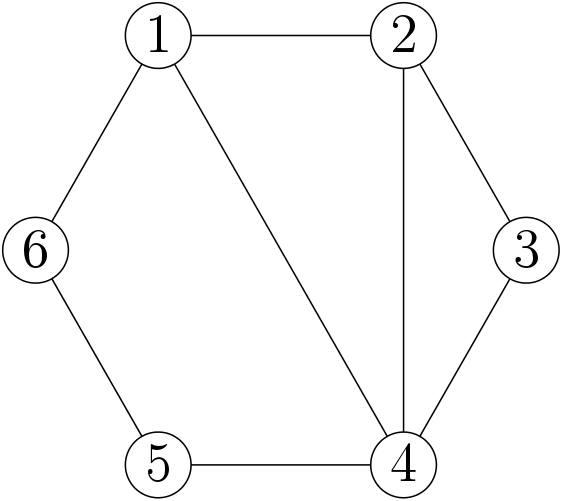}
\end{minipage}
\caption{The subdivision $T\in \level{\baues}{2}(\polygon{6})$ of \Cref{example:T} (left) and $\level{\DOWN(T)}{1}$ (right).}
\label{fig:T}
\end{figure}

Consider the subdivision $T\in \level{\baues}{2}(\polygon{6})$ in \Cref{fig:T} whose maximal cells are 
\[\left\{\level{[\emptyset, 124]}{2}, \level{[\emptyset,234]}{2},\level{[\emptyset,1456]}{2},\level{[1,1246]}{2},\level{[2,1234]}{2},\level{[4,1345]}{2},\level{[4,2345]}{2}\right\}\] 
The gray cells of $T$ in the figure give $\DOWN(T)$; that is:
\[\DOWN(T) = \{[\emptyset, 124],[\emptyset,234],[\emptyset,1456]\}.\] 
As seen in the right part of the figure, the cells in $\DOWN(T)$ are precisely the ones that have a full-dimensional intersection with the first level.
\end{example}

The main result in this section is that 
$\UP$ and $\DOWN$ induce homotopy equivalences of the corresponding order complexes (\Cref{coro:equivalence}).
To prove this we use the following criterion, originally proved by Babson~\cite{Babson1993}. Another proof can be found in~\cite{SturmfelsZiegler1993} and some generalizations appear in \cite{BWW2005}:

\begin{lemma}[Babson's Lemma]
\label{lemma:babson}
Let $f: \mathcal P \to \mathcal Q$ be an order preserving map between two posets. Suppose that for every $q\in \mathcal Q$ we have that
\begin{enumerate}
\item $f^{-1}(q)$  is contractible, and
\item $f^{-1}(q) \cap  \mathcal P_{\le p}$ is contractible, for every $p\in f^{-1}(\mathcal Q_{\ge q})$.
\end{enumerate}
Then $f$ is a homotopy equivalence.
\end{lemma}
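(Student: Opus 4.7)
My plan is to derive the lemma from two applications of Quillen's Theorem A for order-preserving maps of posets. Recall that Theorem A, applied to an order-preserving poset map $g: \mathcal A \to \mathcal B$, states that if either the lower-set preimages $g^{-1}(\mathcal B_{\le b})$ or the upper-set preimages $g^{-1}(\mathcal B_{\ge b})$ are contractible for every $b \in \mathcal B$, then $g$ induces a homotopy equivalence on order complexes.

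Fix $q \in \mathcal Q$ and consider the order-preserving inclusion
\[
i_q: f^{-1}(q) \hookrightarrow f^{-1}(\mathcal Q_{\ge q}).
\]
I would apply Theorem A in its lower-set form to $i_q$: for each $p \in f^{-1}(\mathcal Q_{\ge q})$,
\[
i_q^{-1}\bigl(f^{-1}(\mathcal Q_{\ge q})_{\le p}\bigr) = f^{-1}(q) \cap \mathcal P_{\le p},
\]
which is contractible by hypothesis (2). Therefore $i_q$ is a homotopy equivalence, and combining with hypothesis (1) that $f^{-1}(q)$ is contractible, I conclude that $f^{-1}(\mathcal Q_{\ge q})$ is contractible for every $q \in \mathcal Q$. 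Finally I apply Theorem A in its upper-set form to $f$ itself: every preimage $f^{-1}(\mathcal Q_{\ge q})$ is contractible, so $f$ is a homotopy equivalence.

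The main subtlety, apart from invoking Theorem A as a black box, is the bookkeeping of which form of the theorem to use and for which map: hypothesis (2) is phrased with the lower sets $\mathcal P_{\le p}$, which matches exactly the lower-set form of Theorem A for the auxiliary inclusion $i_q$, and the resulting contractibility of the upper-set preimages $f^{-1}(\mathcal Q_{\ge q})$ then feeds cleanly into the upper-set form of Theorem A for $f$. Everything else in the argument is formal.
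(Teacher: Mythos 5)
The paper does not actually prove Babson's Lemma; it cites Babson~\cite{Babson1993} and Sturmfels--Ziegler~\cite{SturmfelsZiegler1993} and moves on. So there is no in-paper proof to compare against, and the question becomes whether your argument is itself correct. It is. The computation $i_q^{-1}\bigl(f^{-1}(\mathcal Q_{\ge q})_{\le p}\bigr) = f^{-1}(q)\cap \mathcal P_{\le p}$ is right (the intersection with $f^{-1}(q)$ automatically lands inside $f^{-1}(\mathcal Q_{\ge q})$), so the lower-set form of Quillen's Theorem A applies to the inclusion $i_q$ and shows $f^{-1}(\mathcal Q_{\ge q}) \simeq f^{-1}(q)$, which by hypothesis (1) is contractible; the upper-set form of Theorem A then gives the conclusion for $f$. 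This two-step reduction to Quillen's Theorem A is essentially the standard route to the lemma and is in the same spirit as the proofs in the references the paper cites, so your argument is a legitimate self-contained derivation rather than a genuinely different attack. One small thing worth making explicit if you were to present this formally: contractibility in the hypotheses is doing double duty in guaranteeing nonemptiness of the fibers, which is needed for the order complexes to be nonempty; this is consistent with how the hypotheses are phrased, but it is the kind of detail that should be mentioned rather than left implicit.
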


For a collection $S$ of subzonotopes of $Z(A)$, let $\level{\vertices}{k}(S)$ be the set of vertices of cardinality $k$ of all zonotopes in $S$. We only consider a point $B$ in $[X,Y]$ to be a vertex if it is a face; that is, if $[X,Y]$ is separated from $\{B\}$.

\begin{proposition}
\label{prop:up}
Let  $S\in \level{\baues}{k+\frac{1}{2}}(\polygon n)$. Consider a point $X\in \level{\vertices}{k}(S)$. Define
\[
\up_S(X) := 
X \cup \{i\in [n] \mid X\cup i \in \vertices^{(k+1)}(S)\}.
\]
(Here ``\,$\up$'' stands for ``upper hole'').
Then $[X,\up_S(X)]$ is separated from every cell in $S$. 
\end{proposition}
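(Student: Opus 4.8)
The claim is that for a subdivision $S\in\level{\baues}{k+\frac12}(\polygon n)$ and a level-$k$ vertex $X$ of $S$, the tile $[X,\up_S(X)]$ is separated from every cell $[X_i,Y_i]$ of $S$. The plan is to argue entirely on the side of $Z(\polygon n)$ using the lifting property: since all hypersimplicial subdivisions of $\polygon n$ are lifting (\Cref{thm:BohneDress} and the remark after \Cref{prop:commute}), $S=\level{\tilde S}{k+\frac12}$ for some zonotopal tiling $\tilde S$ of $Z(\polygon n)$, and it suffices to show that $[X,\up_S(X)]$ is separated from each tile in $\tilde S$ that survives into $S$. By \Cref{thm:separated}, two tiles are separated if and only if they appear together in a single zonotopal tiling; so the most conceptual route is to produce, for the fixed $X$, one zonotopal tiling of $Z(\polygon n)$ containing both $[X,\up_S(X)]$ and all the relevant tiles of $\tilde S$ — or, failing a single global tiling, to verify the covector condition of \Cref{prop:separated} tile by tile.

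First I would pin down $\up_S(X)$ concretely. The set $X\cup\{i : X\cup i\in\vertices^{(k+1)}(S)\}$ collects exactly those elements $i$ such that the point $X\cup i$ lies on level $k+1$ and is a face of some cell of $\tilde S$; geometrically these are the elements by which the level-$k$ vertex $X$ can be "pushed up". The key structural fact I would establish is that $[X,\up_S(X)]$ is itself a tile of $\tilde S$ (or refines into tiles of $\tilde S$), i.e.\ that the upper link of the point $X$ in the zonotopal tiling $\tilde S$ is precisely the zonotope $X+Z(\up_S(X)\setminus X)$. This should follow because $\tilde S$ is a tiling: the tiles of $\tilde S$ incident to the vertex $X$ and lying locally above it fit together around $X$, and in the planar case ($d=2$, so $Z(\polygon n)$ is $3$-dimensional) their union in the "up" direction is a single cell whose generating set is exactly the collection of $i$ for which $X\cup i$ is an edge going up. Here I would use that $\polygon n$ is a convex polygon, where local pictures of zonotopal tilings are well controlled (no non-lifting phenomena, every subconfiguration is again in convex position).

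Given that $[X,\up_S(X)]$ is a cell of (or a union of cells of) $\tilde S$, separation from every other cell of $\tilde S$ is automatic: any two cells of one zonotopal tiling are separated by \Cref{thm:separated}, and separation is inherited by subtiles by \Cref{prop:subtiles}. The cells of $S=\level{\tilde S}{k+\frac12}$ are by definition of the form $[X_i,Y_i]$ with $|X_i|<k<k+1<|Y_i|$ coming from tiles of $\tilde S$ (together with the extra book-keeping cells that lie in a single level), so this finishes the argument. The main obstacle I anticipate is the first structural step — proving that $[X,\up_S(X)]$ genuinely assembles into a single tile of the ambient zonotopal tiling rather than merely being separated from each piece by a possibly different covector. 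If a clean global argument is unavailable, the fallback is the local/covector approach: fix $X$, and for each cell $[X_i,Y_i]\in S$ exhibit an affine functional on $\polygon n$ realizing the sign pattern of \Cref{prop:separated} for the pair $\bigl([X,\up_S(X)],[X_i,Y_i]\bigr)$, building it from the lifting function of $\tilde S$ restricted appropriately and perturbing on the independent "free" coordinates (the set $(\up_S(X)\setminus X)\cap(Y_i\setminus X)$), exactly in the style of the perturbation arguments in the proof of \Cref{thm:separated} and of \Cref{prop:fine-separated}. I expect the convexity of $\polygon n$ to be what makes the free coordinates behave well, so that the desired covector always exists.
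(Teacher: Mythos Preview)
Your plan hinges on the ``key structural fact'' that $[X,\up_S(X)]$ is a tile of some zonotopal lift $\tilde S$ of $S$, so that \Cref{thm:separated} finishes the job. This is where the gap lies, and it is not a formality. Two concrete problems: first, $\up_S(X)\setminus X$ can easily have more than $d+1=3$ elements, so $[X,\up_S(X)]$ cannot be a cell of any fine $\tilde S$, and being merely a \emph{union} of cells of $\tilde S$ does not suffice --- separation of $[X,\up_S(X)]$ from $[I,J]$ is not inherited from separation of its pieces, since the offending circuit may draw its positive part from several pieces at once. Second, $\up_S(X)$ is computed from $S$ alone (cells with $|X_j|<k$ and $|Y_j|>k+1$), whereas the upward edges of $X$ in $\tilde S$ see all of $\tilde S$; you give no reason these coincide. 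What you really need is a single tiling containing $[X,\up_S(X)]$ together with every cell of $S$, but that is essentially \Cref{coro:coarsest}, which the paper \emph{derives from} this proposition --- so the route is circular.

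The paper's proof bypasses lifts entirely and argues directly with circuits. Assuming $[X,\up_S(X)]$ and some $[I,J]\in S$ are not separated, it reduces (using $d=2$, so circuits of $\polygon n$ have size four) to a subtile $[X,Y]$ with $|Y\setminus X|=2$, then uses the reformulation $\up_S(X)=\{i:\exists j\in X,\ [X\setminus j,X\cup i]\text{ is a face of a cell in }S\}$: since each such $[X\setminus j,X\cup y]$ is separated from $[I,J]$, the positive part of the bad circuit is forced to be exactly $Y\setminus X$. Finally the specific shape of $\polygon n$-circuits (swapping one element of $C^+$ for any element outside $C$ yields another circuit) forces $X\subseteq I$, contradicting $|I|\le k-1<k=|X|$. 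Your fallback covector approach points in this direction, but the combinatorics just described is the entire content of the proof and you have not supplied it.
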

\begin{proof}
Observe that $\up_S(X)$ equals
\[
\{i\in [n] \mid \exists j\in X \quad [X\setminus j, X\cup i] \text{ is a face of a cell in } S\}.
\]

Suppose there exists $X\in \level{\vertices}{k}(S)$ and $[I,J]\in S$ such that $[X, \cup\up_S(X)]$ and $[I,J]$ are not separated. 
Since $d=2$ we may assume that $|J\setminus I| \le 2$ and there is $Y\in \level{[X, \up_S(X)]}{k+2}$ such that $[X,Y]$ is not separated from $[I,J]$. 
So we have a circuit $(C^+, C^-)$ such that $C^+\in Y\setminus I$ and $C^-\in J\setminus X$
Further, since $S\in  \level{\baues}{k+\frac{1}{2}}(\polygon n)$ we can also assume $|I|\le k-1$. 
Let $y\in Y\setminus X$. Since $y\in \up_S(X)\setminus X$ we have that there is $x\in X$ such that $[X\setminus x, X\cup y]$ is a face of a cell in $S$. Then by \Cref{prop:subtiles} and the fact that $S$ is pairwise separated we have that $[X\setminus x, X\cup y ]$ is separated from $[I,J]$. So $C^+$ can not be contained in $X\cup y $. This means that $C^+ = Y\setminus X$. Notice that for every $i \in [n] \setminus C$ there is $y\in C^+$ such that $(C^+\setminus y \cup i, C^-)$ is a circuit. So if there is an $i \in X\setminus I$, this circuit would imply that $[X, Y\setminus y ]$ is not separated from $[I,J]$, which can not be as $[X, Y\setminus y]$ is a face of some cell in $S$. But this means $X\setminus I = \emptyset$ which is a contradiction since $|X|= k > k-1 = |I|$.
\end{proof}

\begin{corollary}
\label{coro:coarsest}
Let  $S\in \level{\baues}{k+\frac{1}{2}}(\polygon n)$. Then 
\[
\level{S}{k+1}\cup\{\level{[X, \up_S(X)]}{k+1} \mid X \in \level{\vertices}{k}(S) \},
\]
together with all their faces, form the unique coarsest subdivision in the fibre $\DOWN^{-1}(S)$.
\end{corollary}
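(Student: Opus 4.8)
The plan is to show two things about the candidate subdivision
$T_S := \level{S}{k+1}\cup\{\level{[X, \up_S(X)]}{k+1} \mid X \in \level{\vertices}{k}(S) \}$:
first that it is actually a subdivision of $\level{\polygon n}{k+1}$ lying in the fibre $\DOWN^{-1}(S)$, and second that every other subdivision in $\DOWN^{-1}(S)$ refines it (so it is the unique coarsest element). The fact that $\UP$ and $\DOWN$ are well defined — which rests on every hypersimplicial subdivision of $\polygon n$ being lifting, via \cite{BalWel} — will be used throughout, so in particular every $S\in\level\baues{k+\frac12}(\polygon n)$ is of the form $\level{S}{k+\frac12}$ for some zonotopal tiling, and every element of $\DOWN^{-1}(S)$ lifts.

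\textbf{Step 1: $T_S$ is a pairwise-separated family of tiles covering level $k+1$.}
First I would check that the cells are pairwise separated. The cells of $\level{S}{k+1}$ are pairwise separated because they all sit inside a common zonotopal tiling. \Cref{prop:up} gives that each $[X,\up_S(X)]$ is separated from every cell of $S$, and in particular from every cell of $\level{S}{k+1}$ (those are faces of cells of $S$, so use \Cref{prop:subtiles}); it remains to see that $[X,\up_S(X)]$ and $[X',\up_S(X')]$ are separated for two vertices $X\ne X'$ of $S$, which I expect to follow from the same circuit bookkeeping as in the proof of \Cref{prop:up}, exploiting $d=2$ and the fact that $X,X'$ are both vertices of the same pairwise-separated family $S$. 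By \Cref{thm:separated} a pairwise-separated family of tiles extends to a zonotopal tiling, hence (slicing at level $k+1$) $T_S$ together with its faces is genuinely a hypersimplicial subdivision of $\level{\polygon n}{k+1}$; concretely, its underlying set is the union of the $\conv(\level{[X_i,Y_i]}{k+1})$, and these have disjoint interiors and cover $\conv(\level{\polygon n}{k+1})$ by a standard volume/separation argument.

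\textbf{Step 2: $\DOWN(T_S)=S$.}
By definition $\DOWN(T_S)=T_S^-$, the set of cells $[X_i,Y_i]\in T_S$ with $|X_i|<k-1$. For the cells coming from $\level{S}{k+1}$ this reproduces exactly $\level{S}{k+\frac12}=\slevel{S}{k+1}{-}$ by \Cref{prop:commute}. For the new cells $[X,\up_S(X)]$ we have $|X|=k\not< k-1$, so they contribute nothing to $T_S^-$. Hence $\DOWN(T_S)=\level{S}{k+\frac12}=S$, so $T_S\in\DOWN^{-1}(S)$.

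\textbf{Step 3: $T_S$ is coarsest, and uniqueness.}
Let $R\in\DOWN^{-1}(S)$ be arbitrary; I must show $R$ refines $T_S$, i.e.\ every cell of $R$ is contained in a cell of $T_S$. Take a maximal cell $[I,J]^{(k+1)}$ of $R$. From $\DOWN(R)=S$ and \Cref{prop:commute} one reads off that the ``lower part'' of $R$ agrees with $S$: every cell of $R$ with $|I|<k-1$ is a cell of $S$, hence already a cell of $\level{S}{k+1}\subseteq T_S$. So assume $|I|=k$, i.e.\ $X:=I\in\level{\vertices}{k}(R)$; then $X$ is a vertex of a cell of $\level{R}{k+\frac12}=S$ (after slicing down), so $X\in\level{\vertices}{k}(S)$, and $\up_S(X)$ is defined. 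It then suffices to show $J\subseteq\up_S(X)$: if $i\in J\setminus X$ then the face $[X, X\cup i]^{(k+1)}$ of $[X,J]^{(k+1)}$ is a cell of $R$, which slices at level $k+1$ to a segment joining the vertices $X$ and $X\cup i$; passing down through $\DOWN$ these survive as vertices of $S$ (since $|X|=k\ge k-1$ is the ``top'' layer and these vertices are faces), giving $X\cup i\in\level{\vertices}{k+1}(S)$, hence $i\in\up_S(X)$ by definition. Thus $[I,J]\subseteq[X,\up_S(X)]$, a cell of $T_S$, and $R$ refines $T_S$. This simultaneously proves existence of a coarsest element and its uniqueness.

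\textbf{Main obstacle.}
The routine-looking but genuinely delicate point is Step 1 — pinning down the pairwise separation of the new cells $[X,\up_S(X)]$ among themselves, and confirming that the whole family (old plus new) really tiles $\conv(\level{\polygon n}{k+1})$ rather than merely being a separated collection of tiles. The separation statement among new cells is not an immediate corollary of \Cref{prop:up}; it should come from rerunning the circuit argument there with two varying vertices, again using crucially that $d=2$ so that circuits have support of size at most $4$ and dependent tiles split off small pieces (compare \Cref{prop:dependent_tiles}). The other mild subtlety is making sure, in Step 3, that ``vertices persist under $\DOWN$'' — i.e.\ that a point which is a genuine vertex (face) of a cell at level $k+1$ remains a vertex of the corresponding sliced-down cell of $S$ — which again uses \Cref{prop:subtiles} and the face characterization of separation from a singleton.
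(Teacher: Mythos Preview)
Your approach is essentially the paper's: the crux is the pairwise separation of the new cells $[X_1,\up_S(X_1)]$ and $[X_2,\up_S(X_2)]$ via a circuit argument parallel to \Cref{prop:up} (exactly what the paper does), together with the observation that every cell with $|X|=k$ in any $R\in\DOWN^{-1}(S)$ sits inside some $[X,\up_S(X)]$.

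Two small corrections. First, \Cref{thm:separated} only treats a \emph{pair} of tiles, so it does not by itself yield that a pairwise-separated \emph{family} extends to a single zonotopal tiling; the paper's (implicit) logical order is cleaner: once Step~3 shows that every $R\in\DOWN^{-1}(S)$ refines $T_S$, and the maximal cells of $T_S$ are pairwise separated, $T_S$ is automatically a coarsening of the subdivision $R$ and hence itself a subdivision---so Step~1 follows from Step~3 rather than preceding it. Second, the threshold in Steps~2--3 should read $|X_i|<k$, not $|X_i|<k-1$: for $R$ at level $k+1$ one has $R^{-}=\{[X_i,Y_i]\in R:|X_i|<(k+1)-1=k\}$, and with the incorrect bound your case analysis in Step~3 would leave the case $|I|=k-1$ unaddressed. (Also, $\level{[X,X\cup i]}{k+1}$ is a single point, not a segment, though your intent there is clear.)
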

\begin{proof}
We need to show that for $X_1, X_2\in \level{\vertices}{k}(S)$, $[X_1,\up_S(X_1)]$ and $[X_2,\up_S(X_2)]$ are separated. If not, we can again assume there are subsets $Y_1\subseteq \up_S(X_1)$ and $Y_2 \subseteq \up_S(X_2)$ of cardinality $k+2$ such that $[X_1, Y_1]$ and $[X_2,Y_2]$ are separated. As any subtile of them are faces of $S$, we have that there is a circuit $C^+ = Y_1\setminus X_1$ and $C^-= Y_2\setminus X_2$. Similarly as the proof of \ref{prop:up}, this implies that $X_1 = X_2$. The corollary follows from the fact that every cell in a subdivision in $\DOWN^{-1}(S)$ not coming from $S$ is of type 1 and hence it is contained in $[X,\up_S(X)]$ for some $X$.
\end{proof}

\begin{example}
\label{example:S}
Consider the subdivision $S\in \level{\baues}{1}(\polygon{6})$ in \Cref{fig:T} whose maximal cells are 
\[\left\{\level{[\emptyset, 124]}{1}, \level{[\emptyset,234]}{1},\level{[\emptyset,145]}{1},\level{[\emptyset,156]}{1}\right\}.\] 

We have that
\[
\begin{array}{ccc}
\up(1)= 12456, &\up(2)= 1234, &\up(3)= 234, \\
\up(4)= 12345, &\up(5)= 156, &\up(6)= 156, \\
\end{array}
\]
so that the coarsest subdivision $\hat S$ of $\DOWN^{-1}(S)$ has maximal cells 
\begin{align*}
\left\{\level{[\emptyset, 124]}{2}, \right.&\level{[\emptyset,234]}{2},\level{[\emptyset,145]}{2},\level{[\emptyset,156]}{2}, \\
&\left.
\level{[1, 12456]}{2}, \level{[2,1234]}{2},\level{[4,12345]}{2},\level{[5,1456]}{2}
\right\}.
\end{align*}
The two cells 
\begin{align*}
\level{[3, \up(3)]}{2} &= \level{[3, 234]}{2} \subset \level{[\emptyset,234]}{2}, \qquad\text{and}\\
\level{[6, \up(6)]}{2} &= \level{[6, 156]}{2}\subset \level{[\emptyset,1456]}{2} 
\end{align*}
 are also  in $\hat S$, but they are not maximal: they are edges.
\begin{figure}[h]

\begin{minipage}{0.35\textwidth}
  \centering
  \includegraphics[width=0.8\textwidth]{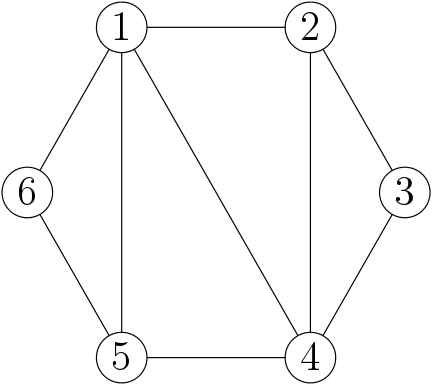}
\end{minipage}
\begin{minipage}{0.54\textwidth}
  \centering
  \includegraphics[width=0.8\textwidth]{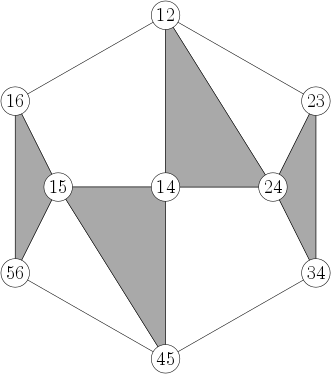}
\end{minipage}
\caption{The subdivision $S\in \level{\baues}{1}(\polygon{6})$ of \Cref{example:S} (left) and $\hat S \in \level{\baues}{2}(\polygon{6})$ (right).}
\label{fig:S}
\end{figure}
\end{example}

\begin{lemma}
\label{lemma:refinement}
Let  $S\in \level{\baues}{k+\frac{1}{2}}(\polygon n)$ and let $T\in \level{\baues}{k+1}(\polygon n)$ be such that $S \le \DOWN(T)$. 
Then, the poset $\DOWN^{-1}(S) \cap \level{\baues}{k+1}(\polygon n)_{\le T}$ has a unique maximal element.
\end{lemma}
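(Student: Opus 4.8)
The plan is to exhibit the unique maximal element of $\DOWN^{-1}(S) \cap \level{\baues}{k+1}(\polygon n)_{\le T}$ explicitly, by combining the two subdivisions $\hat S$ and $T$. Recall from \Cref{coro:coarsest} that $\hat S := \level{S}{k+1}\cup\{\level{[X, \up_S(X)]}{k+1} \mid X\in \level{\vertices}{k}(S)\}$ is the coarsest element of $\DOWN^{-1}(S)$. The candidate maximal element of the poset in the statement will be the \emph{common refinement} $\hat S \wedge T$ of $\hat S$ and $T$, restricted to its cells that cover level $k+1$. First I would check that this common refinement is well-defined as a hypersimplicial subdivision of $\level{\polygon n}{k+1}$: since $S \le \DOWN(T)$, every cell of $T$ that intersects the first level fits inside a cell of $S$, and the cells of $\hat S$ not coming from $S$ are the ``upper-hole'' tiles $[X,\up_S(X)]$, which by \Cref{prop:up} are separated from every cell of $S$ and hence (by $S\le\DOWN(T)$ and \Cref{prop:subtiles}) from every cell of $T$; this separation is exactly what is needed for a common refinement to exist as a genuine polyhedral subdivision.

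Next I would verify the three required properties of $\hat S \wedge T$. (i) It refines $T$: immediate, since a common refinement refines both factors. (ii) It lies in $\DOWN^{-1}(S)$, i.e. $\level{(\hat S\wedge T)}{k+\frac12} = S$: because $\DOWN$ is induced by intersecting with level $k$, and both $\hat S$ and $T$ already satisfy $\level{\hat S}{k+1}$-restriction compatible with $S$ — more precisely, refining a subdivision $\hat S$ of $\level{\polygon n}{k+1}$ by cells of $T$ that already sit at level $\ge k$ does not change which cells cover level $k$, because the new cuts are introduced by cells of $T$ that are separated from the level-$k$ part of $\hat S$. Here I would use \Cref{prop:commute} and the definition of $\level{S}{k+\frac12}$ to phrase this cleanly: the cells of $\hat S\wedge T$ with $|X|<k<|Y|$ are precisely those of $\hat S$, which are precisely those of $S$ lifted. (iii) Maximality: let $T'\in \DOWN^{-1}(S)\cap \level{\baues}{k+1}(\polygon n)_{\le T}$ be arbitrary. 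Then $T'$ refines $T$, and since $\DOWN(T') = S$ the coarsest element $\hat S$ of $\DOWN^{-1}(S)$ coarsens $T'$ as well, i.e. $T' \le \hat S$. A subdivision that simultaneously refines $\hat S$ and $T$ must refine their common refinement, so $T' \le \hat S\wedge T$, giving maximality and uniqueness at once.

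The main obstacle I anticipate is step (ii) — showing that forming the common refinement with $T$ does not disturb the level-$k$ slice, i.e. that $\hat S\wedge T$ really lands in the fibre $\DOWN^{-1}(S)$ rather than some strictly finer $S'$. The delicate point is that a cell $[X,Y]$ of $T$ might in principle cut an upper-hole tile $[X_0,\up_S(X_0)]$ of $\hat S$ in a way that produces a new cell covering level $k$. I would rule this out by noting that any cell of $T$ covering level $k$ is forced (by $S \le \DOWN(T)$) to be contained in a level-$k$-covering cell of $S$, hence of $\hat S$, so no \emph{new} level-$k$-covering cell is created; the cells of $T$ not covering level $k$ have $|X|\ge k$ or $|Y|\le k$, and intersecting $[X_0,\up_S(X_0)]$ (which has $|X_0|=k$) with such a cell cannot yield a tile with $|X'|<k<|Y'|$ unless it coincides with a face of $\hat S$ already present. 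This is the place where the hypothesis $d=2$ (giving $|Y\setminus X|\le 2$ for fine cells, and the strong separation properties of \Cref{prop:up} and \Cref{coro:coarsest}) is genuinely used, exactly as in those two preceding results.
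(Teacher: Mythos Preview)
Your overall strategy---defining the candidate maximum as the common refinement $\hat S\wedge T$ and then using that any $T'$ in the poset refines both $\hat S$ and $T$---is the right one, and it coincides with what the paper constructs. Your maximality argument (step~(iii)) is clean and correct.

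The gap is in well-definedness. You argue that the upper-hole tiles $[X_0,\up_S(X_0)]$ of $\hat S$ are separated from every cell of $T$ ``by $S\le\DOWN(T)$ and \Cref{prop:subtiles}''. This reasoning is backwards: $S\le\DOWN(T)$ means cells of $S$ are contained in cells of $\DOWN(T)$, not the other way round, and \Cref{prop:subtiles} only lets you pass separation to \emph{subtiles}, not to containing tiles. In fact the claimed tile-separation is \emph{false} already in the paper's running example (\Cref{example:T} and~\Cref{example:S}): the upper-hole cell $[1,12456]\in\hat S$ is \emph{not} separated from $[\emptyset,1456]\in T$, as witnessed by the circuit $(\{1,5\},\{4,6\})$. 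So the global separation argument cannot work as stated, and with it your justification that $\hat S\wedge T$ is a hypersimplicial subdivision collapses.

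What rescues the construction---and this is exactly how the paper proceeds---is to build the common refinement \emph{locally}. Outside the upper-hole polygons the cells of $\hat S$ with $|X|<k$ lie in $S\le\DOWN(T)$, hence already refine $T$. Inside each $2$-dimensional polygon $\level{[X,\up_S(X)]}{k+1}$ one observes that a $T'\in\DOWN^{-1}(S)$ refines $T$ if and only if it uses all the edges of $T$ that land in that polygon; then one takes, for each $X$, the unique coarsest subdivision of the polygon containing those edges. This is a purely $2$-dimensional ``add the given diagonals'' step, and it bypasses the (false) global tile-separation claim entirely. The object you obtain is your $\hat S\wedge T$, but proving it is hypersimplicial needs this local viewpoint rather than an appeal to \Cref{prop:subtiles}.
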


\begin{proof}
Let $\hat S$ be the maximal element of $\DOWN^{-1}(S)$, as described in \Cref{coro:coarsest}. 

Let $T'\in \DOWN^{-1}(S)$, which is a refinement of $\hat S$. If a cell $\level{[X,Y]}{k+1}\in T'$ is such that $|X| < k$, then $[X,Y]\in S$ which implies that it is contained in a cell of $\DOWN(T)$. Then, $\level{[X,Y]}{k+1}$ is contained in a cell of $T$. Thus, for $T'$ to be a refinement of $T$, 
it is enough that $\level{[X,Y]}{k+1}\in T'$ is contained in a cell of $T$ for every $[X,Y]\in T'$ with  $|X|=k$.

For every such $X$, the cells $\level{[X,Y']}{k+1}\in T$ are a subdivision of the polygon $\level{[X,\up_{\DOWN(T)}(X)]}{k+1}$. Let $\level{[X,Y_1]}{k+1},\dots, \level{[X,Y_l]}{k+1}$ be such subdivision. For each $Y$ there are two possibilities:
\begin{itemize}
\item If $Y\subseteq \up_{\DOWN(T)}(X)$, then $\level{[X,Y]}{k+1}$ is contained in a cell of $T$ if and only if there is some $i\in[l]$ such that $Y \subseteq Y_i$. 
\item If $Y$ is not contained in $\up_{\DOWN(T)}(X)$, then $\level{[X,Y]}{k+1}$ is contained in a cell of $T$ if and only if $\level{[X,Y]}{k+1}$ does not intersect the interior of $\level{[X,\up_{\DOWN(T)}(X)]}{k+1}$. To see this, notice that if $\level{[X,Y]}{k+1}$ does not intersect the interior of $\level{[X,\up_{\DOWN(T)}(X)]}{k+1}$, then all vertices of $\level{[X,Y]}{k+1}$ correspond to edges of $\level{S}{k}$ contained in the same cell of $\DOWN(T)$. If this cell is $[X',Y']$, then $\level{[X',Y']}{k+1}\in T$ contains $[X,Y]$.
\end{itemize}
The discussion above implies that: a  $T'\in \DOWN^{-1}(S)$ is a refinement of $T$ if and only if all edges of $T$ are also edges in $T'$. This follows from the fact that the only edges in $T'$ not in $\hat S$ are of the form $[X,Y]$ with $|X|=k$ and $Y\subseteq \up_S(X)$. For each $X$, there is a unique coarsest subdivision of the polygon $\level{[X,\up_S(X)]}{k+1}$ that uses those edges. The subdivision that does that for each $X$ is the unique coarsest refinement of $T$ in $\DOWN^{-1}(S)$.
\end{proof}

\begin{example}
\label{example:tprime}
Consider the subdivisions $T$ from \Cref{example:T} and $S$ from \Cref{example:S}. We have that $S$ refines $\DOWN(T)$. The unique minimal, (actually, the only) subdivision in $\DOWN^{-1}(S) \cap \level{\baues}{k+1}(\polygon n)_{\le T}$ is $T'$ as depicted in \Cref{fig:tprime}.

\begin{figure}[h]
\includegraphics[width=0.4\textwidth]{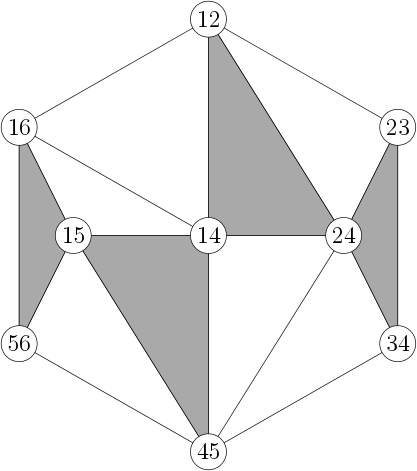}
\caption{The only subdivision $T'$ in $\DOWN^{-1}(S) \cap \level{\baues}{k+1}(\polygon 6)_{\le T}$}
\label{fig:tprime}
\end{figure}
\end{example}

\begin{remark}
\label{remark:coarsest}
One could expect the unique maximal element stated in \Cref{lemma:refinement} to coincide with the maximal element $\widehat{\DOWN(T)}$ in $\DOWN^{-1}(\DOWN(T))$. That is not the case in \Cref{example:tprime}. In fact, in that example $\widehat{\DOWN(T)}$ (whose picture would be as the picture of $T$ in \Cref{fig:T} without the edge $\{45,24\}$) does not refine $\hat S$.
\end{remark}

\begin{corollary}
\label{coro:equivalence}
The maps $\DOWN: \level{\baues}{k+1}(\polygon n) \to \level{\baues}{k+\frac{1}{2}}(\polygon n)$
and $\UP: \level{\baues}{k}(\polygon n) \to \level{\baues}{k+\frac{1}{2}}(\polygon n)$
are homotopy equivalences.
\end{corollary}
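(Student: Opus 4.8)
The plan is to verify the two hypotheses of Babson's Lemma (\Cref{lemma:babson}) for the order-preserving surjection $\DOWN\colon \level{\baues}{k+1}(\polygon n) \to \level{\baues}{k+\frac12}(\polygon n)$, and then transfer the conclusion to $\UP$ by the symmetry $k\leftrightarrow n-k$. Surjectivity of $\DOWN$ is immediate: an element of $\level{\baues}{k+\frac12}(\polygon n)$ is $\level{S}{k+\frac12}$ for some non-trivial zonotopal tiling $S$ of $Z(\polygon n)$, and then $\level{S}{k+1}$ is a non-trivial hypersimplicial subdivision (lifting, by \cite{BalWel}) with $\DOWN(\level{S}{k+1})=\level{S}{k+\frac12}$. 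The only topological input beyond Babson's Lemma is the standard fact that a non-empty finite poset with a maximum element has contractible order complex (it is a cone); and in a finite poset a unique maximal element is automatically a maximum. Both poset hypotheses of \Cref{lemma:babson} will be reduced to this.

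For hypothesis (1), we must show $\DOWN^{-1}(S)$ is contractible for each $S\in\level{\baues}{k+\frac12}(\polygon n)$. By \Cref{coro:coarsest}, $\DOWN^{-1}(S)$ has a unique coarsest element $\hat S$, hence a maximum, hence is contractible. For hypothesis (2), we must show $\DOWN^{-1}(S)\cap\level{\baues}{k+1}(\polygon n)_{\le T}$ is contractible for every $T\in\DOWN^{-1}\big(\level{\baues}{k+\frac12}(\polygon n)_{\ge S}\big)$. Unwinding the last condition, $T$ ranges exactly over those $T\in\level{\baues}{k+1}(\polygon n)$ with $S\le\DOWN(T)$, which is precisely the hypothesis of \Cref{lemma:refinement}. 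That lemma provides a unique maximal — hence maximum — element of $\DOWN^{-1}(S)\cap\level{\baues}{k+1}(\polygon n)_{\le T}$, so this poset is contractible too. \Cref{lemma:babson} then yields that $\DOWN$ is a homotopy equivalence, for every $n$ and every admissible $k$.

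To handle $\UP$, apply the complementation involution $\iota\colon[X,Y]\mapsto[[n]\setminus Y,[n]\setminus X]$ on tiles. It preserves inclusion of tiles, carries a tile covering level $k$ to one covering level $n-k$, and hence induces poset isomorphisms $\level{\baues}{k}(\polygon n)\cong\level{\baues}{n-k}(\polygon n)$ and $\level{\baues}{k+\frac12}(\polygon n)\cong\level{\baues}{(n-k-1)+\frac12}(\polygon n)$; moreover it sends $S^+$, taken at level $k$, to the corresponding ``$(\cdot)^-$'' collection, taken at level $n-k$. Therefore $\iota$ intertwines $\UP\colon\level{\baues}{k}(\polygon n)\to\level{\baues}{k+\frac12}(\polygon n)$ with $\DOWN\colon\level{\baues}{n-k}(\polygon n)\to\level{\baues}{(n-k-1)+\frac12}(\polygon n)$. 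Since the latter is a homotopy equivalence by the previous paragraph, so is $\UP$.

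The substantive content is already packed into \Cref{coro:coarsest} and \Cref{lemma:refinement}, so there is no real obstacle left in the corollary itself. The two points demanding care are: first, the bookkeeping in hypothesis (2) of Babson's Lemma, namely recognizing that the index set $T\in\DOWN^{-1}(\level{\baues}{k+\frac12}(\polygon n)_{\ge S})$ is exactly the hypothesis $S\le\DOWN(T)$ of \Cref{lemma:refinement} — and not mistaking the maximum provided there for $\widehat{\DOWN(T)}$, which \Cref{remark:coarsest} warns need not even lie in the relevant fibre; and second, the routine verification that $\iota$ really identifies the ``half-level'' posets and the maps $\UP$, $\DOWN$ as claimed.
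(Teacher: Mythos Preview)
Your proof is correct and follows essentially the same approach as the paper: verify the two hypotheses of Babson's Lemma for $\DOWN$ via \Cref{coro:coarsest} and \Cref{lemma:refinement}, then obtain $\UP$ by symmetry. Your treatment is simply more explicit --- spelling out surjectivity, the ``unique maximal $\Rightarrow$ maximum $\Rightarrow$ contractible'' step, and the complementation involution realizing the symmetry --- where the paper compresses all of this into two sentences.
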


\begin{proof}
For $\DOWN$, conditions (1) and (2) in Babson's Lemma follow from \Cref{coro:coarsest} and \Cref{lemma:refinement}, respectively, since a poset with a unique maximal element is clearly contractible.
For $\UP$ the proof is completely symmetric.
\end{proof}

\begin{theorem}
\label{thm:equivalence}
Let $A$ be the vertex set of a convex $n$-gon.
The inclusion $\level{\baues_{\coh}}{k}(A) \to \level{\baues}{k}(A)$ is a homotopy equivalence, for $k=1,\dots, n-1$.
\end{theorem}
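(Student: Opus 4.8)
The plan is to prove the statement by induction on $k$, using the intermediate posets $\level\baues{k+\frac12}(\polygon n)$ and the two maps $\UP$ and $\DOWN$ as a bridge between consecutive levels. The base case $k=1$ is the classical result that $\level{\baues_{\coh}}{1}(A) \hookrightarrow \level\baues1(A)$ is a homotopy equivalence: for a convex polygon every subdivision is regular, so in fact both posets coincide and the inclusion is the identity; more to the point, the secondary polytope story gives that $\level\baues1(A)$ is (the face poset of) the boundary of the associahedron, so it is a sphere and the inclusion of coherent subdivisions is an equivalence. (Alternatively one may start the induction at the trivial observation that $\level{\baues_{\coh}}{1}(A)=\level\baues1(A)$ for $A=\polygon n$.)

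For the inductive step, suppose the result holds for level $k$. By \Cref{coro:equivalence} the maps
\[
\level{\baues}{k+1}(\polygon n) \xrightarrow{\ \DOWN\ } \level{\baues}{k+\frac12}(\polygon n) \xleftarrow{\ \UP\ } \level{\baues}{k}(\polygon n)
\]
are both homotopy equivalences, so $\level\baues{k+1}(\polygon n)$ and $\level\baues{k}(\polygon n)$ have the same homotopy type; by the inductive hypothesis this is the homotopy type of a sphere $S^{n-4}$, matching the dimension of $\level\fiber{k+1}(\polygon n)$ (which, being a fiber polytope of a projection $\level{\Delta_n}{k+1}\to \polygon n^{(k+1)}$ with $(n-1)-2 = n-3$ the relevant dimension, has boundary an $(n-4)$-sphere). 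The remaining, and genuinely necessary, point is to check that under these equivalences the \emph{coherent} subposet is carried to the coherent subposet up to homotopy — i.e.\ that the inclusion $\level{\baues_\coh}{k+1}(A)\hookrightarrow \level\baues{k+1}(A)$ is itself an equivalence, not merely that the ambient posets match. For this I would argue that $\level{\baues_\coh}{k+1}(A)$ is the face poset of the hypersecondary polytope $\level\fiber{k+1}(A)$, hence an $(n-4)$-sphere, and that the inclusion is a map of spheres of the same dimension inducing an isomorphism on top homology (a consequence of \Cref{prop:intersecting}(2) together with the fact — from \Cref{thm:RaSaStZi} and the Bohne--Dress correspondence — that the refinement map $\level r{k+1}:\baues^Z(A)\to \level\baues{k+1}(A)$ and its coherent analogue are compatible with $\UP/\DOWN$); by Whitehead's theorem an equivalence on the level of fundamental group plus homology isomorphism gives a homotopy equivalence, and simple connectivity of $\level\baues{k+1}(\polygon n)$ is exactly \cite[Theorem 6.4]{BalWel}.

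The cleanest way to package all of this, and the one I would actually write, is: the commuting square of posets
\[
\begin{array}{ccc}
\level{\baues_\coh}{k}(A) & \longrightarrow & \level{\baues_\coh}{k+1}(A)\\
\downarrow & & \downarrow\\
\level\baues{k}(A) & \longrightarrow & \level\baues{k+1}(A)
\end{array}
\]
(where the horizontal maps are induced by $\UP$ followed by a section of $\DOWN$, realized on coherent subdivisions by the identification of normal fans in \Cref{prop:fiber_zonotope}) has both horizontal arrows homotopy equivalences — the top one because hypersecondary polytopes at consecutive levels are normally equivalent after Minkowski summing, the bottom one by \Cref{coro:equivalence} — so the left vertical arrow is an equivalence if and only if the right one is; induction from $k=1$ then finishes the proof, and the ``in particular'' statement about the $(n-4)$-sphere follows since $\level{\baues_\coh}{k}(A)$ is the boundary complex of $\level\fiber{k}(A)$. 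I expect the main obstacle to be precisely the verification that $\UP$ and $\DOWN$ restrict (up to the normal-equivalence identifications) to maps \emph{between the coherent subposets} compatibly with the inclusions — i.e.\ making the square above genuinely commute up to homotopy — since $\UP$ and $\DOWN$ are defined combinatorially on all subdivisions and one must trace through \Cref{prop:commute} and \Cref{prop:fiber_zonotope} to see that coherence is preserved; everything else is formal once that compatibility is in hand.
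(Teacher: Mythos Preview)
Your overall strategy---induct on $k$ using \Cref{coro:equivalence} as the bridge---matches the paper's, but the inductive step as you have written it has a genuine gap: the top horizontal arrow in your square does not exist as a poset map. Hypersecondary polytopes at consecutive levels are \emph{not} normally equivalent; \Cref{ex:hypersecondary} shows $\level\fiber1(\polygon 6)$ has $14$ vertices while $\level\fiber2(\polygon 6)$ has $66$, so there is no face-lattice map $\level{\baues_\coh}k(A)\to\level{\baues_\coh}{k+1}(A)$ coming from ``identification of normal fans''. \Cref{prop:fiber_zonotope} only says both are Minkowski summands of $\fiber^Z(A)$; it does not compare them to each other. Your alternative---define the horizontal map as $\UP$ followed by a section of $\DOWN$---runs into the separate problem that the natural section (the coarsest lift of \Cref{coro:coarsest}) has no reason to send coherent subdivisions to coherent ones; \Cref{remark:coarsest} is a warning in exactly this direction. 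Your Whitehead-style backup argument has its own hole: you never justify that the inclusion has degree~$\pm1$ on top homology.

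The paper's fix is to avoid the missing horizontal map altogether by introducing $\baues_\coh^Z(\polygon n)$ as a common \emph{source}. The slicing maps $\level rk:\baues_\coh^Z(\polygon n)\to\level{\baues_\coh}k(\polygon n)$ are defined for every $k$, and each one is a homotopy equivalence because it is geometrically realised as the identity between normal fans (this is where \Cref{prop:fiber_zonotope} is actually used). One then has a commuting ``butterfly''
\[
\baues_\coh^Z(\polygon n)\ \xrightarrow{\ \level r{k+1}\ }\ \level{\baues_\coh}{k+1}(\polygon n)\ \xhookrightarrow{\ \level i{k+1}\ }\ \level\baues{k+1}(\polygon n)\ \xrightarrow{\ \DOWN\ }\ \level\baues{k+\frac12}(\polygon n)
\]
equal (by \Cref{prop:commute}) to the analogous composite through level $k$ using $\level rk$, $\level ik$, and $\UP$. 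Every map in the level-$k$ path is an equivalence (induction, \Cref{coro:equivalence}), and so are $\level r{k+1}$ and $\DOWN$; hence $\level i{k+1}$ is as well. This is exactly the commuting square you were looking for, but with the zonotopal poset playing the role of the missing corner.
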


\begin{proof}
The proof is by induction on $k$. The base case, $k=1$, is the main result 
of Rambau and Santos in \cite{RaSa}.
Now let us suppose that $\level{\baues_{\coh}}{k}(A) \to \level{\baues}{k}(\polygon n)$ is a homotopy equivalence and we will prove that 
$\level{\baues_{\coh}}{k+1}(\polygon n) \to \level{\baues}{k+1}(\polygon n)$ is also a homotopy equivalence. 
Consider the following diagram, which commutes by \Cref{prop:commute}:

\begin{center}
\begin{tikzcd}
 & \level{\baues_{\coh}}{k+1}(\polygon n) \arrow[r, hook, dotted, "\level{i}{k+1}"] & [2em]
\level{\baues}{k+1}(\polygon n) \arrow[dr, "\DOWN"] & \\
\baues_{\coh}^Z(\polygon n) \arrow[ur,"\level{r}{k+1}"] \arrow[dr,"\level rk"] & & &
\level{\baues}{k+\frac{1}{2}}(\polygon n) \\
 & \level{\baues_{\coh}}{k}(\polygon n) \arrow[r, hook, "\level{i}{k}"] &
\level{\baues}{k}(\polygon n) \arrow[ur, "\UP"] & \\
\end{tikzcd}
\end{center}

The maps $\level{i}{k}$ and $\level{i}{k+1}$ are the inclusions of coherent subdivisions into all subdivisions. The maps $\level{r}{k}$ and $\level{r}{k+1}$ are the restriction of each zonotopal tiling to its $k$ and $k+1$ levels; that is, $S\mapsto \level{S}{k}$ and $S\mapsto \level{S}{k+1}$ respectively. They are homotopy equivalences since they can be geometrically realized as the identity maps among the normal fans of $\fiber^Z(\polygon n)$, $\level{\fiber}{k}(\polygon n)$ and $\level{\fiber}{k+1}(\polygon n)$. Since $\DOWN$ and $\UP$ are homotopy equivalences by \Cref{coro:equivalence}, and $\level{i}{k}$ is a homotopy equivalence by inductive hypothesis, the dotted arrow $\level{i}{k+1}$ must also be a homotopy equivalence.
\end{proof}

\begin{corollary}
The restriction map $\level{r}{k}: \baues^Z(\polygon n) \to \level{\baues}{k}(\polygon n)$ is a homotopy equivalence.
\end{corollary}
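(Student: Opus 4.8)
The plan is to obtain this corollary by a short diagram chase, since all the real work is already contained in \Cref{thm:equivalence}. Consider the square
\[
\begin{tikzcd}
\baues_{\coh}^Z(\polygon n) \arrow[r, hook] \arrow[d, "\level rk"'] & \baues^Z(\polygon n) \arrow[d, "\level rk"] \\
\level{\baues_{\coh}}{k}(\polygon n) \arrow[r, hook] & \level{\baues}{k}(\polygon n)
\end{tikzcd}
\]
whose horizontal arrows are the inclusions of coherent subdivisions into all subdivisions and whose vertical arrows are the level-$k$ slicing $S\mapsto\level Sk$. This square commutes on the nose, because slicing is given by the same formula of \Cref{prop:intersecting} in both rows and the identity $\level{S(Z(\polygon n),w)}k=S(\level{\polygon n}{k},w)$ shows that a coherent tiling slices to a coherent subdivision.

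Next I would check that three of the four arrows are homotopy equivalences. The top arrow is a homotopy equivalence by the second item of \Cref{thm:RaSaStZi} applied to the cyclic zonotope $Z(\cyclic n2)=Z(\polygon n)$. The bottom arrow is a homotopy equivalence by \Cref{thm:equivalence}. The left arrow is a homotopy equivalence by the observation already made inside the proof of \Cref{thm:equivalence}: the normal fans of $\fiber^Z(\polygon n)$ and of $\level{\fiber}{k}(\polygon n)$ both sit in $(\R^n)^*$ modulo the linear subspace of affine functions on $\polygon n$, the first refines the second by \Cref{prop:intersecting}(2), and $\level rk$ on coherent subdivisions is precisely the poset map induced by this refinement; a refinement of two complete fans in the same space induces a homotopy equivalence of order complexes, since both order complexes triangulate the same sphere compatibly with the map.

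Finally, the two-out-of-three property of homotopy equivalences applied to the square concludes the proof: the composite $\baues_{\coh}^Z(\polygon n)\hookrightarrow\baues^Z(\polygon n)\xrightarrow{\level rk}\level{\baues}{k}(\polygon n)$ equals $\baues_{\coh}^Z(\polygon n)\xrightarrow{\level rk}\level{\baues_{\coh}}{k}(\polygon n)\hookrightarrow\level{\baues}{k}(\polygon n)$, which is a homotopy equivalence as a composite of two, and since the left inclusion is a homotopy equivalence so is $\level rk\colon\baues^Z(\polygon n)\to\level{\baues}{k}(\polygon n)$. I do not anticipate a genuine obstacle here, as all the substance lies in \Cref{thm:equivalence}; the only delicate point is the claim about the left vertical arrow, and one can sidestep the ``refinement of complete fans'' fact altogether by instead running an induction on $k$ that mirrors the proof of \Cref{thm:equivalence}, using the non-coherent version of the diagram there (still commutative by \Cref{prop:commute}) so that $\UP\circ\level rk=\DOWN\circ\level r{k+1}$ and the inductive step follows from \Cref{coro:equivalence}, with the base case $k=1$ supplied by the square above together with \cite{RaSa}.
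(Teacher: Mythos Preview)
Your proposal is correct and follows essentially the same approach as the paper: both use the commutative square with inclusions of coherent into all subdivisions horizontally and the slicing maps $\level rk$ vertically, invoke \cite{SturmfelsZiegler1993} for the top arrow, \Cref{thm:equivalence} for the bottom, the normal-fan observation from the proof of \Cref{thm:equivalence} for the left arrow, and conclude by two-out-of-three. Your additional remark on an inductive alternative is not in the paper but is a reasonable aside.
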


\begin{proof}
We now use the following commutative diagram:
\begin{center}
\begin{tikzcd}[column sep=large, row sep=large]
  \baues_{\coh}^Z(\polygon n) \arrow[r, hook, "\level{i}{k+1}"]
   \arrow[d, "\level{r}{k}"] & 
\baues^Z(\polygon n) \arrow[d, dotted, "\level{r}{k}"]  \\
  \level{\baues_{\coh}}{k}(\polygon n) \arrow[r, hook, "\level{i}{k}"] &
\level{\baues}{k}(\polygon n)   \\
\end{tikzcd}
\end{center}
The top arrow is a homotopy equivalence by \cite{SturmfelsZiegler1993} and the bottom arrow by \Cref{thm:equivalence}. The left arrow is also a homotopy equivalence, as mentioned in the proof of \Cref{thm:equivalence}, so the right arrow is a homotopy equivalence too.
\end{proof}

\section{Hypercatalan numbers}
\label{sec:hypercatalan}
Let $\level{C_n}k$ be the number of hypertriangulations of $\level{\polygon n}k$, which we will call hypercatalan number. 
When $k=1$ these are the usual Catalan numbers $C_n$. In this section we look at the case $k=2$. For a triangulation $T$ of $\polygon n$ and a vertex $i\in [n]$ we write $\deg_T(i)$ for the number of diagonals (edges excluding the sides of $\polygon n$) in $T$ incident to $i$ and we call it the \emph{degree} of $i$.

\begin{lemma}
\label{lemma:hypercatalan}
\[
\level{C_n}2 = \sum_T  \prod_{i\in [n]} C_{\deg_T(i)},
\]
where the sum runs over all trinangulations $T$ of $\polygon n$.
\end{lemma}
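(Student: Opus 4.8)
The plan is to establish a bijection between the hypertriangulations of $\level{\polygon n}2$ and the pairs consisting of a triangulation $T$ of $\polygon n$ together with, for every vertex $i\in[n]$, a triangulation of the convex ``link polygon'' $P_i(T):=\conv\{a_j : \{i,j\}\text{ is an edge of }T\}$. Since $i$ is incident in $T$ to its two boundary edges and to $\deg_T(i)$ diagonals, $P_i(T)$ has $\deg_T(i)+2$ vertices, hence admits $C_{\deg_T(i)}$ triangulations (a convex $(\ell+2)$-gon has $C_\ell$ triangulations); granting the bijection, summing over all $T$ then yields the asserted formula.

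First I would extract, from a hypertriangulation $T^{(2)}$ of $\level{\polygon n}2$, a ``base'' triangulation $T$ of $\polygon n$. By \cite{BalWel}, $T^{(2)}$ is lifting, so $T^{(2)}=\level S2$ for some zonotopal tiling $S$ of $Z(\polygon n)$; since refining $S$ cannot refine the already-fine slice $\level S2=T^{(2)}$, I may take $S$ fine. Then the cells of $\level S1$ covering level $1$ are full-dimensional triangles, so $T:=\level S1$ is a triangulation of $\polygon n$, and $\DOWN(T^{(2)})=\level S{3/2}=\UP(T)$ (using \Cref{prop:commute}). As $\UP$ is injective on triangulations, $T$ is determined by $T^{(2)}$.

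Conversely, fixing a triangulation $T$ I would analyse the fibre $\DOWN^{-1}(\UP(T))$. By \Cref{coro:coarsest} it has a unique coarsest element $\widehat S$, whose maximal cells are the triangles $\level{[\emptyset,Y]}2$ with $Y\in T$ together with the cells $\level{[\{i\},\up_{\UP(T)}(\{i\})]}2$ for $i\in[n]$; by \Cref{prop:up} one has $\up_{\UP(T)}(\{i\})=\{i\}\cup\{j:\{i,j\}\text{ an edge of }T\}$, so the latter cell is a translate of $P_i(T)$. Any hypertriangulation in $\DOWN^{-1}(\UP(T))$ refines $\widehat S$, hence restricts to a fine hypersimplicial subdivision of each of its cells. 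A triangle $\level{[\emptyset,Y]}2$ has no proper hypersimplicial refinement (the only fine subtile covering level $2$ is $[\emptyset,Y]$ itself), while the fine subtiles of $\level{[\{i\},\up(\{i\})]}2$ covering level $2$ are exactly the triangles $a_i+\conv B$ with $B$ a triple of neighbours of $i$; thus a refinement of that cell is precisely a triangulation of $P_i(T)$. Since distinct $P_i(T)$ are interior-disjoint cells of $\widehat S$ and triangulating a polygon leaves its boundary intact, these choices can be made independently, and each choice produces a genuine hypertriangulation. Hence the hypertriangulations with base $T$ biject with $\prod_{i\in[n]}\{\text{triangulations of }P_i(T)\}$, of cardinality $\prod_{i\in[n]}C_{\deg_T(i)}$, and summing over $T$ gives $\level{C_n}2=\sum_T\prod_{i\in[n]}C_{\deg_T(i)}$.

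The step I expect to be most delicate is the last one: making rigorous, inside the $\pi$-induced-subdivision formalism, both that every hypertriangulation in $\DOWN^{-1}(\UP(T))$ is obtained by independently triangulating the link polygons (no coarse cell of $\widehat S$ other than the $P_i(T)$ can be refined, and the choices at different vertices do not interact) and, conversely, that an arbitrary system of triangulations of the polygons $P_i(T)$ glues with the fixed triangles $\level{[\emptyset,Y]}2$ into a subdivision of $\level{\polygon n}2$ that is both hypersimplicial and fine. Everything else reduces to \Cref{prop:commute}, \Cref{coro:coarsest}, and the lifting theorem of \cite{BalWel}.
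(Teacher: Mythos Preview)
Your proposal is correct and follows essentially the same route as the paper: decompose hypertriangulations of $\level{\polygon n}2$ according to their base triangulation $T$, then observe that the remaining freedom is exactly an independent triangulation of each link polygon $\level{[\{i\},\up_T(\{i\})]}2$, which has $\deg_T(i)+2$ vertices. The paper's proof is terser (it does not spell out the well-definedness of the base triangulation via lifting and \Cref{prop:commute}, nor invoke \Cref{coro:coarsest} explicitly), but the argument is the same; your use of \cite{BalWel} could even be replaced by Galashin's earlier result since only fine subdivisions are needed.
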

\begin{proof}
Let $T$ be a triangulation of $\polygon n$. To get a hypertriangulation of $\level{\polygon n}2$ that agrees with $T$ we need to triangulate $\level{[i,\UP_T(i)]}2$ for every $i$. As $\level{[i,\UP_T(i)]}2$ is a polygon with $\deg_T(i)+2$ vertices, the number of ways to triangulate it is $C_{\deg_T(i)}$. So for each triangulation $T$ there are $\prod\limits_{i\in [n]} C_{\deg_T(i)}$ hypertriangulations of $\level{\polygon n}{2}$. Summing over all triangulations gives the desired result.
\end{proof}

\begin{example}
For $n=3,\dots,10$  we have computed this formula to give the following values:
\[
\begin{array}{c|cccccccc}
n& 3 & 4 & 5 & 6 & 7 & 8 & 9 & 10\\
\hline
&1 &  2 &  10 &  70 &  574 &  5176 &  49656 &  497640 \\
\end{array}
\]

The computation for $n=6$ is as follows. Triangulations of the hexagon fall into three symmetry classes:
\begin{itemize}
\item Two triangulations with degree sequence $020202$, each contributing 
$1\cdot 2 \cdot1\cdot 2 \cdot1\cdot 2 =8$ to the sum.
\item Six triangulations with degree sequence $012012$, each contributing 
$1\cdot 1 \cdot2\cdot 1 \cdot1\cdot 2 =4$ to the sum.
\item Six triangulations with degree sequence $011103$, each contributing 
$1\cdot 1 \cdot1\cdot 1 \cdot1\cdot 5 =5$ to the sum.
\end{itemize}
This gives a total of 
$
2\cdot 8 + 6\cdot 4+6\cdot 5 = 70
$
fine subdivisions in $\level\baues{2}(\polygon 6)$.
\end{example}

\begin{lemma}
\label{lemma:catalan_product}
Let $T$ be a triangulation of an $(n+2)$-gon with $n \ge 4$. Then
\[
2^{n-2}   \le \prod_{i\in [n+2]} C_{\deg_T(i)}, \le  2^{\frac{5}{2}n -7}.
\]
\end{lemma}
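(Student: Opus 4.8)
The plan is to treat the two bounds separately; the lower bound is short, and the upper bound is the substantial one.

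\smallskip
\noindent\emph{Lower bound.} First record that $C_m\ge 2^{m-1}$ for all $m\ge 1$: indeed $C_{m+1}/C_m=\frac{2(2m+1)}{m+2}\ge 2$ for $m\ge1$ and $C_1=1$. Next use the standard facts about a triangulation $T$ of an $(n+2)$-gon: it uses $n-1$ diagonals, so $\sum_{i\in[n+2]}\deg_T(i)=2(n-1)$, and it has at least two ears, i.e.\ at least two vertices of degree $0$. Writing $p$ for the number of vertices of positive degree we get $p\le n$, hence
\[
\prod_{i\in[n+2]}C_{\deg_T(i)}=\prod_{\deg_T(i)\ge1}C_{\deg_T(i)}\ \ge\ \prod_{\deg_T(i)\ge1}2^{\deg_T(i)-1}=2^{\,2(n-1)-p}\ \ge\ 2^{\,n-2}.
\]

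\smallskip
\noindent\emph{Upper bound, reduction step.} I would use the elementary estimate $C_m\le 2^{2m-3}$ valid for all $m\ge2$ (from $C_m\le 4^m/(m+1)$ one gets this for $m\ge 7$, and $m=2,\dots,6$ are checked by hand), together with $C_0=C_1=1$. Taking logarithms and splitting off the vertices of degree $\ge2$,
\[
\log_2\!\Big(\prod_{i}C_{\deg_T(i)}\Big)\ \le\ \sum_{\deg_T(i)\ge2}\bigl(2\deg_T(i)-3\bigr)\ =\ 2\bigl(2(n-1)-n_1\bigr)-3n_{\ge2},
\]
where $n_0,n_1,n_{\ge2}$ count the vertices of degree $0$, $1$ and $\ge2$ (so $n_0+n_1+n_{\ge2}=n+2$ and $n_1+\sum_{\deg\ge2}\deg_T(i)=2(n-1)$). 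A one-line rearrangement shows that the right-hand side is $\le\frac52 n-7$ \emph{if and only if} $3n_0\le n_1+3n_{\ge2}$, equivalently $6n_0+2n_1\le 3(n+2)$. Thus everything reduces to the following purely combinatorial statement: for every triangulation of a convex $N$-gon with $N\ge6$ one has $6n_0+2n_1\le 3N$.

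\smallskip
\noindent\emph{The combinatorial inequality and the main obstacle.} I would prove it by induction on $N$, checking $N=6$ directly (there are only three triangulations of the hexagon up to symmetry, and the inequality is immediate, in fact tight for the fan). For the step, remove the ear at a degree-$0$ vertex $v$ with boundary neighbours $p,q$: one gets a triangulation $T'$ of an $(N-1)$-gon in which $\deg_{T'}(p)=\deg_T(p)-1$, $\deg_{T'}(q)=\deg_T(q)-1$ and all other degrees are unchanged (the diagonal $pq$ has become a boundary edge). Comparing $6n_0+2n_1$ for $T$ and $T'$ and invoking the inductive hypothesis, one checks that the inequality for $T$ follows \emph{provided} the removed ear has a neighbour of degree $1$, or has both neighbours of degree exactly $2$; call such an ear \emph{good}. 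The crux, and the step I expect to be the main obstacle, is to show that a good ear always exists when $N\ge6$. A useful partial reduction: looking at the triangle $\{p,q,r\}$ sharing the ear's diagonal $pq$, if that triangle is itself an ear, then the fan of triangles around $q$ consists of only these two triangles, so $\deg_T(q)=1$ and the ear at $v$ is good. What remains is the case in which no ear faces an ear across its diagonal; here one must combine the rigidity caused by an ear with both neighbours of degree $\ge3$ (which forces both neighbours to carry extra diagonals) with the degree-sum identity $\sum_i\deg_T(i)=2N-6$ to exclude the possibility that every ear is ``bad''. I expect this last bookkeeping to be the delicate part, and a few of the smallest values of $N$ may have to be verified separately.
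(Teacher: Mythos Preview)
Your lower bound and your reduction of the upper bound are correct and coincide with the paper's: both arguments use $C_m\le 2^{2m-3}$ for $m\ge2$ and reduce to the same combinatorial inequality, which in the paper's notation ($e=n_0$, $l=n_1$) reads $3e+l\le\tfrac{3(n+2)}{2}$, equivalently your $6n_0+2n_1\le 3N$.

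The difference is in how that inequality is established, and here your argument has a genuine gap. Your inductive step needs, for every $N\ge7$, the existence of an ear whose two boundary neighbours satisfy $4a+2b\ge3$ (with $a,b$ counting degree-$1$ and degree-$2$ neighbours), i.e.\ a ``good'' ear. You do not prove this. Your partial reduction is also not sound as stated: if the triangle $pqr$ across the ear-diagonal $pq$ is literally an ear triangle (both $pr$ and $qr$ boundary edges), then $p$ and $q$ would each have $v$ and $r$ as their two boundary neighbours, forcing $N=4$; so that case never arises for $N\ge6$. What you presumably intend is the weaker hypothesis that \emph{one} of $pr,qr$ is a boundary edge, and then your conclusion $\deg_T(q)=1$ is correct---but you give no argument for the complementary case where both $pr$ and $qr$ are diagonals. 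That is exactly the ``bad ear'' situation, and excluding it globally is the whole content of the inequality; the bookkeeping you defer is not a detail but the entire remaining proof.

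The paper avoids induction altogether. It fixes a triangulation maximizing $3e+l$ and first argues that one may assume every degree-$1$ vertex is adjacent to an ear (otherwise flipping its unique diagonal does not decrease $3e+l$). Small cases then force: no two ears are adjacent, no two degree-$1$ vertices are adjacent, and no ear has two degree-$1$ neighbours. Hence the $l$ (ear, degree-$1$) pairs and the $e'=e-l$ remaining lone ears are pairwise separated along the boundary by at least one vertex of degree $\ge2$. Counting vertices around the $(n{+}2)$-cycle gives
\[
(2l+e')+(l+e')\ =\ 3l+2e'\ \le\ n+2,
\]
and then $l+3e=(3l+2e')+ e' +(e-l) = (3l+2e')+2e' \le (n+2)+ e\le \tfrac{3(n+2)}{2}$ using $e\le\tfrac{n+2}{2}$. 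This direct count is short and sidesteps the existence-of-a-good-ear issue entirely; you could simply replace your inductive plan with it.
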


\begin{proof}
Let $k_1\dots k_j$ be the sequence of the degrees of the vertices of $T$ which are positive.
The terms of this sequence add up to $2n-2$. The contribution of $T$ to the sum is $\prod_{i=1}^j C_{k_i}$. Observe that the number $(n+2)-j$ is the number of ears in $T$, which lies between $2$ and $\frac{n}2+1$. Thus, $j$ lies between $\frac{n}2+1$ and $n$.

For the lower bound, take into account that for every $k\ge 1$ one has $2^{k-1} \le C_k$,
we deduce the contribution of $T$ to be at least $2^{2n-2-j}$. Plugging in that $j\le n$, we get the desired lower bound.

For the upper bound, let $l$ be number of degree 1 vertices. Reorder the $k_i$ so that the last $l$ are equal to 1. We have that $\sum_{i=1}^{j-l} k_i = 2n-2-l$. Now take into account that for $k\ge 2 $ we have that $C_k\le 2^{2k-3}$, so
\[
\prod_{i\in [n+2]} C_{\deg_T(i)} = \prod_{i=1}^{j-l} C_{k_j} \le 2^{2(2n-2-l)-3(j-l)} = 2^{n-10+3e+l}
\]
where $e= n+2-j$ is the number of ears. So to prove the upper bound we need to show that $3e+l\le \frac{3n+6}{2}$. 

Suppose $T$ is the triangulation that maximizes $3e+l$. If there was a vertex of inner degree 1 such that it is not adjacent to an ear, flipping this edge would not decrease the number $3e+l$. So we can assume every degree 1 vertex is next to an ear. But then the vertex of degree 1 can not be neighbour to two ears, otherwise $n=2$, and it can not be neighbour to another vertex of degree 1, otherwise $n=3$. Also, an ear can not be neighbour to two degree 1 vertices, otherwise $n=2$. So the other neighbours of a pair of consecutive vertices (ear,degree 1) must have degree at least 2. Let $e'$ the number of ears not adjacent to any degree 1 vertex. Then $e-e' = l$ is the number of pairs (ear,degree 1) and we have:
\begin{align*}
l+2e &= 3l+2e' \le n+2\\
l+3e &\le n+2+e \le \dfrac{3(n+2)}{2}
\end{align*}
\end{proof}

\begin{corollary}
\label{coro:hypercatalan}
For $n\ge 6$,
\[
 2^{n-2}   \le \frac{\level{C_n}2}{C_n} \le  2^{\frac{5}{2}n -7}.
 \]
\qed
\end{corollary}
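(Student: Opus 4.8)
The plan is to derive the estimate directly from the two preceding lemmas by recognizing the quotient $\level{C_n}{2}/C_n$ as an average. First I would recall that $C_n = \level{C_n}{1}$ is, by definition, the number of hypertriangulations of $\level{\polygon n}{1}$, which is just the number of ordinary triangulations of the $n$-gon $\polygon n$. Consequently the right-hand side of \Cref{lemma:hypercatalan},
\[
\level{C_n}{2} \;=\; \sum_{T}\ \prod_{i\in[n]} C_{\deg_T(i)},
\]
is a sum over a set of exactly $C_n$ triangulations, so $\level{C_n}{2}/C_n$ is the arithmetic mean of the quantities $p(T):=\prod_{i\in[n]} C_{\deg_T(i)}$ as $T$ ranges over all triangulations of $\polygon n$. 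Since a finite average always lies between the smallest and the largest of the averaged values, it will suffice to bound $p(T)$ below and above uniformly in $T$.

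Next I would invoke \Cref{lemma:catalan_product} to obtain exactly such a uniform bound: $\polygon n$ is an $(m+2)$-gon with $m=n-2$, the hypothesis $m\ge 4$ of that lemma is precisely our hypothesis $n\ge 6$, and the lemma sandwiches $p(T)$ between two explicit powers of $2$ that do not depend on $T$. Feeding these two powers into the ``$\min\le\mathrm{mean}\le\max$'' inequality from the previous step yields the claimed two-sided estimate for $\level{C_n}{2}/C_n$ and finishes the argument.

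The hard part here is not the corollary itself but is already behind us: the combinatorial content sits in \Cref{lemma:hypercatalan} --- the decomposition of a hypertriangulation of $\level{\polygon n}{2}$ as a triangulation $T$ of $\polygon n$ together with, for each $i\in[n]$, an arbitrary triangulation of the vertex-star polygon $\level{[i,\up_T(i)]}{2}$ --- and in the Catalan-number estimates of \Cref{lemma:catalan_product}; the corollary is merely the elementary fact that the quotient of a uniformly bounded sum by its number of terms inherits the same bounds. The only things requiring care are the index bookkeeping between the two statements, namely that the parameter $n$ of \Cref{lemma:catalan_product} plays the role of $n-2$ here, and the identification of the denominator $C_n$ with the number of triangulations of $\polygon n$, which is what makes the quotient an honest average.
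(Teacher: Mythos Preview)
Your approach is exactly what the paper intends: the proof in the paper is literally the symbol \qed, so the corollary is meant to follow immediately from \Cref{lemma:hypercatalan} and \Cref{lemma:catalan_product} via precisely the averaging argument you describe.

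There is, however, one wrinkle you flag but do not resolve. You correctly observe that the parameter $n$ in \Cref{lemma:catalan_product} (which concerns an $(n+2)$-gon) corresponds to $n-2$ in the corollary (which concerns $\polygon n$). If you actually carry out that substitution, the bounds you obtain for $p(T)$ are
\[
2^{(n-2)-2}=2^{\,n-4}\ \le\ p(T)\ \le\ 2^{\frac{5}{2}(n-2)-7}=2^{\frac{5}{2}n-12},
\]
which are \emph{not} the exponents $n-2$ and $\tfrac{5}{2}n-7$ stated in the corollary. The stated bounds match \Cref{lemma:catalan_product} verbatim with the \emph{same} $n$, so the paper is silently using the same symbol $n$ in both statements without performing the shift. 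This is a notational inconsistency in the paper rather than a flaw in your reasoning, but your sentence ``Feeding these two powers \dots\ yields the claimed two-sided estimate'' is not literally true as written; you should either note the discrepancy or reconcile the indexing explicitly.
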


\begin{remark}
The lower bound of $2^{n-2}$ of \Cref{lemma:catalan_product} 
for the contribution of a single triangulation $T$
is attained by a zigzag triangulation, in which all degrees are $2$ except for two $1$s and two $0$s. 
When $T$ is a star triangulation in which a vertex is joined to all others, the contribution of $T$ is $C_{n-1} \sim 4^n$ (neglecting a polynomial factor). A higher contribution is obtained by the following procedure: start with any triangulation $T_0$ (e.g.~a zig-zag or a star). Let $T_1$ be obtained by adding an ear at each boundary edge of $T_0$, let $T_2$ be obtained from $T_1$ in the same way, etcetera. This method produces triangulations that contribute about $4.133^n$ (according to our computations) for $n$ large.
\end{remark}

\begin{remark}
By \cite[Theorem 1.2]{Galashin}, hypercatalan numbers are bounded from above by the number of fine zonotopal tilings of $Z(\polygon n)$, which is sequence A060595 in the Online Encyclopedia of Integer Sequences. The known terms are 
\[
\begin{array}{c|cccccccc}
n& 3 & 4 & 5 & 6 & 7 \\
\hline
&1 &  2 &  10 &  148 &  7686\\
\end{array}
\]
\end{remark}

\bibliography{hyperassociahedra}
\bibliographystyle{alpha}

\end{document}